\documentclass[11pt]{amsart}
\topmargin=-3cm \hoffset=-1.8cm \voffset=2cm \textheight=246mm
\textwidth=160mm

\usepackage{amsfonts}
\usepackage{amsmath}
\usepackage{amssymb}
\usepackage{multicol}
\usepackage{stmaryrd}
\usepackage{cite}
\usepackage{epsfig}
\usepackage{color}
\usepackage{graphics}
\usepackage{graphicx}
\usepackage{multicol,graphics}
\newcommand\bes{\begin{eqnarray}}
\newcommand\ees{\end{eqnarray}}

\newtheorem{theorem}{Theorem}[section]
\newtheorem{lemma}[theorem]{Lemma}

\numberwithin{equation}{section}
\allowdisplaybreaks

\begin{document}

\title[A nonlocal diffusion model
with free boundaries]{A Lotka-Volterra competition model with
nonlocal diffusion and free boundaries}
\author[J.F. Cao, W.T. Li and J. Wang]{Jia-Feng Cao$^\dag$,\, Wan-Tong Li$^\ddag$ and Jie Wang$^\dag$}
\thanks{\hspace{-.6cm} $^\dag$Department of Applied Mathematics, Lanzhou University of Technology, Lanzhou, Gansu, 730050, P.R. China.
\\
$^\ddag$School of Mathematics and Statistics, Lanzhou University,
Lanzhou, Gansu, 730000, P.R. China.
\\
$^\S$({\sf Corresponding author~~wtli@lzu.edu.cn})}

\date{\today}

\maketitle

\begin{abstract}
This paper is concerned with a nonlocal diffusion Lotka-Volterra type competition model that consisting of a native species and an invasive species in a one-dimensional habitat with free boundaries. We prove the well-posedness of the system and get a spreading-vanishing dichotomy for the invasive species. We also provide some sufficient conditions to ensure spreading success or spreading failure for the case that the invasive species is an inferior competitor or a superior competitor, respectively.

\textbf{Keywords}: Nonlocal diffusion; Free boundary; Lotka-Volterra type; Spreading-vanishing dichotomy; Sharp criteria

\textbf{AMS Subject Classification (2000)}: 35K57, 35R20, 92D25

\end{abstract}

\section{Introduction}
\noindent

In this paper we are interested in the dynamics of the solution $(u(t,x),v(t,x),g(t),h(t))$ which is governed by the following
nonlocal dispersal model with free boundaries in the one space
dimension
\begin{equation}
\left\{
\begin{aligned}
&u_t=d_1\left[\int_{g(t)}^{h(t)}J(x-y)u(t,y)dy-u\right]
+u(a_1-b_1u-c_1v),& &t>0,~x\in(g(t),h(t)),\\
&v_t=d_2\left[\int_{\mathbb{R}}J(x-y)v(t,y)dy-v\right]
+v(a_2-b_2u-c_2v),& &t>0,~x\in\mathbb{R},\\
&u(t,g(t))=u(t,h(t))=0,& &t>0,\\
&h'(t)=\mu\int_{g(t)}^{h(t)}\int_{h(t)}^{+\infty}J(x-y)u(t,x)dydx,& &t>0,\\
&g'(t)=-\mu\int_{g(t)}^{h(t)}\int_{-\infty}^{g(t)}J(x-y)u(t,x)dydx,& &t>0,\\
&v(0,x)=v_0(x),& &x\in\mathbb{R},\\
&u(0,x)=u_0(x),~h(0)=-g(0)=h_0,& &x\in[-h_0,h_0]
\end{aligned}
\right.
\label{101}
\end{equation}
where $u(t,x)$ represents the density of an invasive species  and $v(t,x)$ denotes the density of a native species. It is imposed that $u$ exists in the initial region $[-h_0,h_0]$ and extends into the habitat by the spreading fronts $x=g(t)$ and $x=h(t)$, which to be determined together with $u(t,x)$ and $v(t,x)$, see Cao et al. \cite{CDLL-2018} for the detailed derivation of the free boundary conditions $h'(t)$ and $g'(t)$. The constants $d_1$ and $d_2$ represent the dispersal rate of species $u$ and $v$, respectively; $\mu$ is a positive constant accounting for the expanding ability of $u$. The initial data satisfy
\begin{equation}
\left\{
\begin{aligned}
&v_0(x)\in C(\mathbb{R})\cap L^\infty(\mathbb{R}),~v_0(x)>0~\text{ in }~\mathbb{R},\\
&u_0(x)\in C([-h_0,h_0]),~u_0(-h_0)=u_0(h_0)=0\text{ and }
u_0(x)>0\text{ in }~(-h_0,h_0),
\end{aligned}
\right.
\label{102}
\end{equation}
and the kernel
$J: \mathbb{R}\rightarrow\mathbb{R}$ is a non-negative continuous
function on $\mathbb{R}$. More precisely, we assume in what follows that
\begin{description}
\item[(J)]$J(0)>0, \int_{\mathbb{R}}J(x)dx=1$, $J$ is symmetric and $\sup_{\mathbb{R}}J\le\infty$.
\end{description}

The problem (\ref{101}) is a nonlocal variation of a Lotka-Volterra model in which the diffusion is described by the Laplace operator, and the free boundary is denoted by the Stefan condition. For example, Du and Lin \cite{DYH2014} considered the long time behavior of the following problem
\begin{equation}
\left\{
\begin{aligned}
&u_t-d_1\Delta u=u(a_1-b_1u-c_1v), & &t>0,0\le x<h(t),\\
&v_t-d_2\Delta v=v(a_2-b_2u-c_2v), & &t>0, 0\le x<\infty,\\
&u_x(t,0)=v_x(t,0)=0,~u(t,x)=0,& &t>0,h(t)\le x<\infty,\\
&h'(t)=-\mu u_x(t,h(t)), & &t>0,\\
&u(0,x)=u_0(x),~h(0)=h_0, & &0\le x\le h_0,\\
&v(0,x)=v_0(x), & &0\le x<\infty.
\end{aligned}
\right.
\label{101-local}
\end{equation}
In the case that $u$ is a superior competitor in the sense that
$\frac{a_1}{a_2}>\max\left\{\frac{b_1}{b_2},~\frac{c_1}{c_2}\right\}$, they established a spreading vanishing dichotomy for species $u$,
that is, either $h(t)\rightarrow+\infty$ and $(u,v)\rightarrow\left(\frac{a_1}{b_1},0\right)$ as $t\rightarrow+\infty$, or $h(t)<+\infty$ and $(u,v)\rightarrow\left(0,\frac{a_2}{c_2}\right)$ as $t\rightarrow+\infty$; while in the case that $u$ is an inferior competitor, namely, $\frac{a_1}{a_2}<\min\left\{\frac{b_1}{b_2},~\frac{c_1}{c_2}\right\}$, they showed that
$(u,v)\rightarrow\left(0,\frac{a_2}{c_2}\right)$ as $t\rightarrow+\infty$, which means that the native species $v$ win the competition. When the spreading of
$u$ happens, they further showed a rough estimate for the spreading speed. These results have been extended to many Lotka-Volterra two species models, one can refer to Du et al. \cite{Du-Wang-Zhou-JMPA-2017}, Guo and Wu \cite{GJS-JDDE-2012,GJS-2015-Nonlinearity}, Wang \cite{wang2014}, Wang and Zhao \cite{WangMX-ZhaoJF-JDDE-2014} and references cited therein.

It is well-known that the invasion and spreading of nonlocal diffusion Lotka-Volterra type competition models have been studied intensively. In 2003, Hutson et al. \cite{Hutson2003JMB} studied the competitive advantages and disadvantages of diffusive rate and diffusive distance in a spatially heterogeneous environment, namely, the following problem
\begin{equation}
\left\{
\begin{aligned}
&u_t=d_1\left[\frac{1}{(L_u)^N}\int_{\Omega}J\left(\frac{x-y}{L_u}\right)u(t,y)dy-u(t,x)\right]+uf(u+v,x), & &(t, x)\in (0,\infty)\times\overline\Omega,\\
&v_t=d_2\left[\frac{1}{(L_v)^N}\int_{\Omega}J\left(\frac{x-y}{L_v}\right)v(t,y)dy-v(t,x)\right]+vf(u+v,x), & &(t, x)\in (0,\infty)\times\overline\Omega,
\end{aligned}
\right.
\label{101-nonlocal}
\end{equation}
where $d_i(i=1,2)$ and $J(\cdot)$ are the same as them in (\ref{101}); the constants $L_u, L_v>0$ characterize the diffusive distance (interpreted as spreads in\cite{Bates2006,Hutson2003JMB}). They showed that as in the case of reaction-diffusion models, for fixed spread slower rates of diffusion are always optimal, that is, for any non-trivial, non-negative initial conditions, if $L_u=L_v$ and $d_1<d_2$, then the semi-trivial equilibrium $(u^*,0)$ is globally asymptotically stable. While fixing the diffusion rate ($d_1=d_2$) and varying the spread,  in the case of small spread, the smaller spread is selected (the semi-trivial equilibrium in the presence of the species with the smaller spread is the global attractor) and in the case of large spread the larger spread is selected.

We also mention that traveling wave solutions of nonlocal diffusive competition systems have been studied intensively. See e.g. Bao et al. \cite{BaoXX-JDE-2016,BLW2015}, Pan et al. \cite{PanSX-ZAMP-2009}, Zhao and Ruan \cite{ZR2011} for the existence of traveling wave fronts, and Du et al. \cite{DLW2016, DLW2018,DLW2019}, Li et al. \cite{LWT-ZhangL-ZhangGB-DCDS-2015} and Wang and Lv \cite{WL2010} for the existence of invasion entire solutions. Also, there are many works concerned with the spectral theory of nonlocal dispersal operators and entire solutions of nonlocal dispersal equations, see Coville \cite{CovilleJDE2010}, Hetzer and Shen \cite{Hetzer-2013}, Li et al. \cite{LSW2010,LWW2008,LWZ2016}, Shen and Zhang \cite{SZ2010,Shen-Zhang-Proceeding-2012}, Sun et al. \cite{SLW2012,SLY2014}, Sun et al. \cite{SZLW2019}, Yang et al. \cite{YLS2016}, Zhang et al. \cite{ZLW2012} and Zhang et al. \cite{ZLW2017,ZLWS2019}.

The main purpose of this paper is extend the above results into the free boundary problem with nonlocal diffusion. It must be emphasized that our approach to deal with the nonlocal diffusion problem (\ref{101}) is totally different from these of the responding random (local) diffusion equations, including the well-posedness as well as the long-term behaviors. In particular, we establish a comparison principle (see Theorem \ref{Comparison-Principle-1}) in a suitable parabolic domain to consider the global asymptotic stability of the
semi-trivial equilibriums $\left(\frac{a_1}{b_1},0\right)$ and $\left(0,\frac
{a_2}{c_2}\right)$ with the different initial datas. Moreover, we give a precise classification of the dynamics for the invasion species $u$ in the case that $u$ is a superior competitor or an  inferior one.

This paper is organized as follows. Section 2 is concerned with the existence and uniqueness of positive solutions of (\ref{101}), which was established by two times using of the contraction mapping theorem. Section 3 is devoted to the dynamics of the solutions that obtained in Section 2. We first collect some essential results among the principal eigenvalues, and then establish a spreading vanishing dichotomy for species $u$ in the case that $u$ is a superior competitor. We further obtain a sharp criteria of expanding ability $\mu$ to ensure spreading or vanishing in the end.

In the end of this section, we must mention that after the completion of this article, we received the preparation paper of Du et al.\cite{Du-Wang-Zhao}. They considered the equation \eqref{101} with \textbf{the two species both located in the same growth domain} and obtained some interesting asymptotical behavior of solutions to \eqref{101} with general reaction term. We also would like to mention the article \cite{WangMX-WangJP}. Wang and Wang  considered a class of free boundary problems of ecological models with nonlocal and local diffusions that can be regarded as extensions of free boundary problems for reaction diffusion systems.

\section{The well-posedness of (\ref{101})}
\noindent

This section is focused on the global existence and uniqueness of solutions to the problem (\ref{101}). For convenient, we introduce some notations first.
For given $h_0, T>0$, we define
\begin{align*}
&\mathbb{H}_{h_0,T}=\Big\{h\in C([0,T])~\Big|~h(0)=h_0,
~~\inf_{0\le t_1<t_2\le T}\frac{h(t_2)-h(t_1)}{t_2-t_1}>0\Big\},\\
&\mathbb{G}_{h_0,T}=\Big\{g\in C([0,T])~\Big|-g(t)\in\mathbb{H}_{h_0,T}\Big\},\\
&C_0([-h_0,h_0])=\Big\{u\in C([0,T])~\Big|~u(-h_0)=u(h_0)=0\text{ and }u_0(x)>0\text{ in }(-h_0,h_0)\Big\}.
\end{align*}

For $g\in\mathbb{G}_{h_0,T}$, $h\in\mathbb{H}_{h_0,T}$
and $u_0(x)\in C_0([-h_0,h_0])$, define
\begin{align*}
&\Omega_{g,h}=\left\{(t,x)\in\mathbb{R}^2: 0<t\le T,
~g(t)<x<h(t)\right\},\\
&\Omega_{\infty}=\left\{(t,x)\in\mathbb{R}^2: 0<t\le T,~x\in\mathbb{R}\right\},\\
&\mathbb{X}_{v_0,\infty}=\Big\{\phi\in C(\Omega_{\infty})~\Big|
~\phi(0,x)=v_0(x)~\text{for}~x\in\mathbb{R}~\text{ and }~0<\phi\le M_1\text{ in }\Omega_{\infty}\Big\},\\
&\mathbb{X}_{u_0,g,h}=\Big\{\psi\in C(\bar\Omega_{g,h})~\Big|~\psi\ge0\text{ in }\Omega_{g,h},
~\psi(0,x)=u_0(x)~\text{for}~x\in[-h_0,h_0]\\
&\quad\quad\quad\quad\quad\quad\quad\quad\quad\quad\quad\quad
\quad\text{and}~\psi(t,g(t))=
\psi(t,h(t))=0~\text{for }~0\le t\le T\Big\}
\end{align*}
with $M_1:=\max\left\{a_1,K_0,\|v_0\|_{L^\infty}\right\}$.
Following is Maximum Principle that will be frequently used later.
\begin{lemma}{\rm(Maximum Principle\cite{CDLL-2018})}
Assume that {\rm\bf(J)} holds and for some given $h_0,T>0$, let $g\in\mathbb{G}_{h_0,T}$ and $h\in\mathbb{H}_{h_0,T}$. Assume
that for all $(t,x)\in\bar\Omega_{g,h}$, functions $u(t,x)$
and $u_t(t,x)$ are continuous, and there exists a function $c(t,x)\in L^\infty(\Omega_{g,h})$ such that
\begin{equation}
\left\{
\begin{aligned}
&u_t(t,x)\ge d\int_{g(t)}^{h(t)}J(x-y)u(t,y)dy-du+c(t,x)u, & &
(t,x)\in \Omega_{g,h},\\
&u(t,g(t))\ge0,~u(t,h(t))\ge0,& &t>0,\\
&u(0,x)\ge0,& &x\in[-h_0,h_0].
\end{aligned}
\right.
\label{201}
\end{equation}
Then $u(t,x)\ge0$ for all $0\le t\le T$ and $x\in[g(t),h(t)]$.
Further more, if $u(0,x)\not\equiv0$ in $[-h_0,h_0]$, then
$u(t,x)>0$ for all $(t,x)\in\Omega_{g,h}$.
\label{Maximum-Principle}
\end{lemma}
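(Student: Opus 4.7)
The plan is to prove the two assertions in turn: first the weak maximum principle ($u\ge 0$), and then the strong version.

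\textbf{Step 1 (weak principle).} I would use the standard perturbation trick. Introduce $\hat u(t,x):=u(t,x)+\epsilon e^{Kt}$ for arbitrary $\epsilon>0$ and a constant $K$ chosen larger than $\|c\|_{L^{\infty}(\Omega_{g,h})}$. At $t=0$ and on the lateral boundary $x\in\{g(t),h(t)\}$ one has $\hat u\ge\epsilon e^{Kt}>0$. Were $\hat u$ to become non-positive somewhere in $\bar\Omega_{g,h}$, continuity would produce a first time $t_0\in(0,T]$ and an interior point $x_0\in(g(t_0),h(t_0))$ with $\hat u(t_0,x_0)=0$ and $\hat u(t,\cdot)\ge 0$ for all $t\le t_0$, which forces $\hat u_t(t_0,x_0)\le 0$. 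Substituting into the differential inequality \eqref{201} and using $u(t_0,\cdot)\ge -\epsilon e^{Kt_0}$ together with $\int_{g(t_0)}^{h(t_0)}J(x_0-y)\,dy\le 1$, a short calculation yields
\[
\hat u_t(t_0,x_0)\;\ge\;\bigl(K-c(t_0,x_0)\bigr)\,\epsilon e^{Kt_0}\;>\;0,
\]
a contradiction. Passing to $\epsilon\to 0^+$ then delivers $u\ge 0$.

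\textbf{Step 2 (strong principle).} Assume $u(0,\cdot)\not\equiv 0$. Using Step~1, the differential inequality rewrites as
\[
u_t(t,x)+\bigl(d-c(t,x)\bigr)u(t,x)\;\ge\;d\int_{g(t)}^{h(t)}J(x-y)\,u(t,y)\,dy\;\ge\;0.
\]
Multiplying by the integrating factor $E(t,x):=\exp\!\bigl(\int_0^t(d-c(s,x))\,ds\bigr)$, I see that for each fixed $x$ the map $t\mapsto E(t,x)\,u(t,x)$ is nondecreasing on $\{t:x\in[g(t),h(t)]\}$. Since $g$ is nonincreasing and $h$ nondecreasing (built into the definitions of $\mathbb{G}_{h_0,T}$ and $\mathbb{H}_{h_0,T}$), any point once inside the interval remains inside, and positivity propagates forward in time. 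Continuity of $u_0$ and $u_0\not\equiv 0$ provide an open interval $I_0\subset(-h_0,h_0)$ on which $u(0,\cdot)>0$, whence $u(t,\cdot)>0$ on $I_0$ for every $t\in[0,T]$. To spread the positive set horizontally, I would fix $\delta>0$ so that $J>0$ on $(-\delta,\delta)$, possible by $J(0)>0$ and continuity of $J$. At any point $x_0\in(g(t),h(t))$ lying within distance $\delta$ of the current positive set, the integral $\int J(x_0-y)u(t,y)\,dy$ is strictly positive, so the inequality combined with the ODE comparison forces $u(\tau,x_0)>0$ for $\tau$ slightly larger than $t$. Iterating this finitely many steps of length less than $\delta$ covers the whole interval $(g(t),h(t))$ for each fixed $t>0$.

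\textbf{Main obstacle.} The substantive work lies in Step~2. Unlike the local Laplacian case, the nonlocal operator $d\int J(x-y)u\,dy-du$ transmits information only across distances where $J>0$; no Hopf-type boundary lemma is available, and positivity must be propagated by a finite chain argument that rests squarely on the hypothesis $J(0)>0$. The moving endpoints $g(t)$ and $h(t)$ add some bookkeeping, but the monotonicity built into $\mathbb{G}_{h_0,T}\times\mathbb{H}_{h_0,T}$ guarantees that once a spatial point is engulfed by the expanding interval it stays there, keeping the iteration well-defined.
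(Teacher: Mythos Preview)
The paper does not actually prove this lemma; it simply refers to \cite[Lemma~2.2]{CDLL-2018}.  So there is no in-house argument to compare against beyond the techniques the authors reuse later (e.g.\ in the proof of Theorem~\ref{Comparison-Principle-1}).

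Your approach is sound and standard.  For Step~1 the argument is correct as written, including at points $x_0\notin[-h_0,h_0]$: since $h,g$ are strictly monotone, such an $x_0$ entered the domain at some earlier time $t_{x_0}<t_0$ with $\hat u(t_{x_0},x_0)>0$, so $\hat u_t(t_0,x_0)\le 0$ is still available.  It is worth remarking that the method the authors actually employ in analogous places (see the $u^1=\bar u\,e^{k_1t}$ computation inside the proof of Theorem~\ref{Comparison-Principle-1}, which mirrors \cite{CDLL-2018}) is different: instead of an $\epsilon$-perturbation and a first-touching point, they multiply by $e^{kt}$, restrict to a time interval of length $\le 1/(d+p_0)$, integrate the inequality from the entry time $t_x$ to the location of the minimum, and obtain a contradiction with the length of the interval.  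Both routes are classical; yours is slightly cleaner here because the touching-point argument avoids the bookkeeping of the entry times $t_x$ in the integration.

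Step~2 is correct in spirit, but your iteration sketch glosses over one point: for $x^*$ close to the moving front $h(t^*)$ (or $g(t^*)$), the point entered the domain only shortly before $t^*$, so a naive ``spread by $\delta$ at each of $n$ fixed time steps'' scheme may not reach it.  The cleanest fix is to run the argument \emph{backwards}: if $u(t^*,x^*)=0$, monotonicity of $E(t,x)u(t,x)$ forces $u(\cdot,x^*)\equiv 0$ on $[t_{x^*},t^*]$, hence $\int J(x^*-y)u(t,y)\,dy=0$ there, and so $u(t,\cdot)\equiv 0$ on $(x^*-\delta,x^*+\delta)\cap(g(t),h(t))$ for all such $t$.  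Iterating this at the fixed time $t^*$ shows $u(t^*,\cdot)\equiv 0$ on all of $(g(t^*),h(t^*))$, contradicting the persistence of positivity on $I_0$.  This avoids the moving-boundary timing issue entirely.
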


\begin{proof}
See the proof of \cite[Lemma 2.2]{CDLL-2018}.
\end{proof}

We first prove a existence and uniqueness result for a general free boundary problem. Consider the following free boundary problem
\begin{equation}
\left\{
\begin{aligned}
&u_t=d_1\left[\int_{g(t)}^{h(t)}J(x-y)u(t,y)dy-u(t,x)\right]
+f_1(u,v),& &t>0,~x\in(g(t),h(t)),\\
&v_t=d_2\left[\int_{\mathbb{R}}J(x-y)v(t,y)dy-v(t,x)\right]
+f_2(u,v),& &t>0,~x\in\mathbb{R},\\
&u(t,g(t))=u(t,h(t))=0,& &t>0,\\
&h'(t)=\mu\int_{g(t)}^{h(t)}\int_{h(t)}^{+\infty}J(x-y)u(t,x)dydx,& &t>0,\\
&g'(t)=-\mu\int_{g(t)}^{h(t)}\int_{-\infty}^{g(t)}J(x-y)u(t,x)dydx,& &t>0,\\
&v(0,x)=v_0(x),& &x\in\mathbb{R},\\
&u(0,x)=u_0(x),~h(0)=-g(0)=h_0,& &x\in[-h_0,h_0]
\end{aligned}
\right.
\label{100100}
\end{equation}
with
$f_1(0,v)=f_2(u,0)=0$ for any $u,v\in\mathbb{R}$. Following are some imposed assumptions on reaction terms $f_1(u,v)$ and $f_2(u,v)$:

\begin{description}
\item[(A1)]there is constant $K_0>0$ such that $f_1(u,v)<0$ for $u>K_0$ and
$f_2(u,v)<0$ for $v>K_0$;
\item[(A2)]$f_i(u,v)$, $i=1,2$ is locally Lipschitz continuous in $\mathbb{R}^2_+$, i.e., For any $L_i>0, i=1,2$, there exists constant $K_1=K_1(L_i)>0$ such that
$$
|f_i(u_1,v_1)-f_i(u_2,v_2)|\le K_1\left(|u_1-u_2|+|v_1-v_2|\right)~\text{for}~u_i\in[0,L_1],~v_i\in[0,L_2].
$$
\end{description}

The following theorem is the main result of this section.
\begin{theorem}
Assume that {\rm\bf(J)} and {\rm\bf(A1)-(A2)} hold, then for any
given $u_0(x)$ and $v_0(x)$ satisfying (\ref{102}) and
$h_0>0$, problem (\ref{100100}) admits a unique positive
solution $(u,v,g,h)$ defined for all $t>0$. Moreover,
for any given $T>0$,  $u\in\mathbb{X}_{u_0,g,h}$, $v\in\mathbb{X}_{v_0,\infty}$, $g\in\mathbb{G}_{h_0,T}$, $h\in\mathbb{H}_{h_0,T}$.
\label{u-v-g-h-exist}
\end{theorem}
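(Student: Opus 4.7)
The plan is to follow the two-step contraction scheme announced in the introduction, which mirrors the single-species argument of \cite{CDLL-2018}. For a small time $T>0$, I would first fix an admissible boundary pair $(g,h)\in\mathbb{G}_{h_0,T}\times\mathbb{H}_{h_0,T}$ and solve the coupled $(u,v)$-system on $\Omega_{g,h}$ and $\Omega_\infty$; then I would close the argument with a second contraction on the pair $(g,h)$ itself, driven by the integral formulas for $h'(t)$ and $g'(t)$.

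\textbf{Inner contraction.} Treating the nonlocal integral term as an inhomogeneity and multiplying $u_t+d_1u=d_1\int_{g(t)}^{h(t)}J(x-y)u\,dy+f_1(u,v)$ by the integrating factor $e^{d_1 t}$ yields the mild formulation
\begin{equation*}
u(t,x)=e^{-d_1 t}u_0(x)+\int_0^t e^{-d_1(t-s)}\Bigl[d_1\!\int_{g(s)}^{h(s)}\!J(x-y)u(s,y)\,dy+f_1(u(s,x),v(s,x))\Bigr]ds
\end{equation*}
on $\Omega_{g,h}$, together with the analogous identity for $v$ on $\Omega_\infty$. These formulas jointly define an operator on the closed set $\mathbb{X}_{u_0,g,h}\times\mathbb{X}_{v_0,\infty}$ equipped with the supremum norm. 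Using {\rm\bf(A2)} with constant $K_1=K_1(M_1)$ and $\int_{\mathbb{R}}J=1$, the operator sends a suitable closed ball into itself and is a contraction provided $T$ is small in terms of $d_i,M_1,K_1$. The boundary conditions $u(t,g(t))=u(t,h(t))=0$ are preserved because $u_0(\pm h_0)=0$ and $f_1(0,v)=0$, while nonnegativity of $u,v$ and the bound $v\le M_1$ follow from Lemma \ref{Maximum-Principle} and a comparison against the spatially homogeneous ODE whose right-hand side dominates $f_2$ by {\rm\bf(A1)}.

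\textbf{Outer contraction.} With the unique $(u,v)=(u_{g,h},v_{g,h})$ of the inner step in hand, I would define $\mathcal{F}(g,h)=(\tilde g,\tilde h)$ by
\begin{equation*}
\tilde h(t)=h_0+\mu\int_0^t\!\int_{g(s)}^{h(s)}\!\int_{h(s)}^{+\infty}J(x-y)u_{g,h}(s,x)\,dy\,dx\,ds
\end{equation*}
and the analogous formula for $\tilde g(t)$. The hypothesis $J(0)>0$ together with the strict positivity of $u_{g,h}$ in the interior (from Lemma \ref{Maximum-Principle}) guarantees $\tilde h'>0$ and $\tilde g'<0$ with a uniform positive lower bound on $[0,T]$, so $(\tilde g,\tilde h)\in\mathbb{G}_{h_0,T}\times\mathbb{H}_{h_0,T}$. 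To show $\mathcal{F}$ is a contraction, I would compare $u_{g_1,h_1}$ and $u_{g_2,h_2}$ after extending each by zero to a common strip: their difference satisfies the inner mild equation with extra source terms coming from the shifted endpoints of $\int_{g(s)}^{h(s)}J(x-y)u\,dy$ and from the moving Dirichlet traces, each of which is bounded by a multiple of $\|g_1-g_2\|_\infty+\|h_1-h_2\|_\infty$ via the uniform boundedness of $J$ and of $u$. Shrinking $T$ further if necessary, $\mathcal{F}$ becomes a strict contraction on the complete metric space $\mathbb{G}_{h_0,T}\times\mathbb{H}_{h_0,T}$, and its fixed point together with the associated $(u,v)$ is the desired local solution on $[0,T]$.

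\textbf{Global extension and main obstacle.} The step I expect to be most delicate is precisely this outer comparison: because $u_{g_1,h_1}$ and $u_{g_2,h_2}$ live on different moving domains, careful bookkeeping of the perturbation of the nonlocal integration range $[g(s),h(s)]$ and of the vanishing traces at the moving endpoints is required to obtain a contraction constant linear in $T$. Once local existence and uniqueness on $[0,T_0]$ are established, global continuation is standard: the a priori bounds $0\le u\le\max\{K_0,\|u_0\|_\infty\}$ and $0\le v\le M_1$ provided by {\rm\bf(A1)} and the maximum principle, combined with the crude estimate $h'(t),\,-g'(t)\le\mu\|u\|_\infty(h(t)-g(t))$, rule out finite-time blow-up of both the solutions and the free boundaries. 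Iterating the local construction then yields the unique global positive solution satisfying $u\in\mathbb{X}_{u_0,g,h}$, $v\in\mathbb{X}_{v_0,\infty}$, $g\in\mathbb{G}_{h_0,T}$, $h\in\mathbb{H}_{h_0,T}$ for every $T>0$.
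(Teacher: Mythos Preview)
Your approach is sound but structurally different from the paper's. You propose an outer contraction on the boundary pair $(g,h)$ with an inner contraction solving the coupled $(u,v)$-system on the fixed moving domain --- essentially extending the single-species architecture of \cite{CDLL-2018} by adjoining the $v$-equation to the inner step. The paper instead runs a \emph{single} contraction on $v^*\in\mathbb{X}_{v_0,\infty}$: given $v^*$, it invokes Theorem~2.1 of \cite{CDLL-2018} as a black box to obtain the full triple $(\tilde u,\tilde g,\tilde h)$ solving the free-boundary $u$-problem with $v^*$ frozen, then solves the Cauchy problem for $\tilde v$ with $\tilde u$ frozen, and shows $\Gamma:v^*\mapsto\tilde v$ contracts. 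The technical heart --- comparing $u$-solutions living on different moving domains via a three-case analysis according to whether $x^*$ lies in $[-h_0,h_0]$, $(h_0,H_1(s))$, or $[H_1(s),H_2(s))$, together with an estimate on $\|\tilde h_1-\tilde h_2\|+\|\tilde g_1-\tilde g_2\|$ --- is exactly the ``main obstacle'' you flag, but in the paper it arises from varying $v^*$ rather than $(g,h)$. The paper's decomposition buys economy by reusing \cite{CDLL-2018} wholesale; yours is more self-contained.

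One caveat on your inner step: the mild formula $u(t,x)=e^{-d_1t}u_0(x)+\int_0^t\cdots$ is only valid for $x\in[-h_0,h_0]$. For $x\in(h_0,h(T)]$ entering the domain at time $t_x>0$ the integral representation must start from $t_x$ with initial value $0$; otherwise the nonlocal term $d_1\int_{g(s)}^{h(s)}J(x-y)u(s,y)\,dy$ is nonzero at $x=h(t)$ and the iterate fails the Dirichlet condition. It is this piecewise-in-$x$ initial time, not merely $f_1(0,v)=0$, that enforces $u(t,h(t))=0$ under iteration. This is how \cite{CDLL-2018} sets up the inner map, and you should match it.
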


\begin{proof} We divide the proof into two steps.
\begin{description}
\item[Step 1] Local existence and uniqueness.
\end{description}

For any given $v^*(t,x)\in\mathbb{X}_{v_0,\infty}$, it follows from Theorem 2.1 in\cite{CDLL-2018} that for $h_0>0$ and $u_0(x)$ satisfying (\ref{102}) the following problem
\begin{equation}
\left\{
\begin{aligned}
&u_t=d_1\left[\int_{g(t)}^{h(t)}J(x-y)u(t,y)dy-u(t,x)\right]+f_1(u,v^*),
& &t>0,~x\in(g(t),h(t)),\\
&u(t,g(t))=u(t,h(t))=0,& &t>0,\\
&h'(t)=\mu\int_{g(t)}^{h(t)}\int_{h(t)}^{+\infty}J(x-y)u(t,x)dydx,& &t>0,\\
&g'(t)=-\mu\int_{g(t)}^{h(t)}\int_{-\infty}^{g(t)}J(x-y)u(t,x)dydx,& &t>0,\\
&u(0,x)=u_0(x),~h(0)=-g(0)=h_0,& &x\in[-h_0,h_0]
\end{aligned}
\right.
\label{1001}
\end{equation}
admits a unique solution, denoted by $(\tilde u(t,x),\tilde g(t),\tilde h(t))$, which is defined for all $t>0$.
Moreover, for any $T>0$, there hold $\tilde g(t)\in \mathbb G_{h_0, T},\;
 \tilde h(t)\in\mathbb H_{h_0, T}$ and $\tilde u(t,x)\in \mathbb{X}_{u_0, g,h}$.
 Further, we have
 \begin{equation}
0<\tilde u(t,x)\le M:=\max\left\{\max_{-h_0\le x\le h_0}u_0(x),
~K_0\right\}~\mbox{ for } 0<t< T,~x\in(\tilde g(t),\tilde h(t)).
\label{v-bound}
\end{equation}

For the above known $\tilde u(t,x)$,  we first define
\begin{equation}
u^*(t,x)=\left\{
\begin{aligned}
&\tilde u(t,x), & &x\in[\tilde g(t),\tilde h(t)],\\
&0,& &x\not\in[\tilde g(t),\tilde h(t)]
\end{aligned}
\right.
\end{equation}
and then consider the following
\begin{equation}
\left\{
\begin{aligned}
&v_t=d_2\left[\int_{\mathbb{R}}J(x-y)v(t,y)dy-v(t,x)\right]+f_2(u^*,v),
& &0<t<T,~x\in\mathbb{R},\\
&v(0,x)=v_0(x),& &x\in\mathbb{R}.
\end{aligned}
\right.
\label{u-equation}
\end{equation}
The existence and uniqueness of the local solution of (\ref{u-equation}) is well
known, we denote it by $\tilde v(t,x)$, and $\tilde v(t,x)\in C(\Omega_\infty)$.

Next, we want to show that $v^*$ and $\tilde v$ is coincide with each other in the sense that the map $\Gamma$ defined by $\Gamma v^*=\tilde v$ admits a unique fixed point in $\mathbb{X}_{v_0,\infty}$. We prove this conclusion by the contraction mapping theorem. Clearly, $\Gamma$ maps
$\mathbb{X}_{v_0,\infty}$ into itself. It is remained to prove that for small $T>0$, $\Gamma$ is contract.

Firstly we note that $\mathbb{X}_{v_0,\infty}$ is a complete metric space
with the metric
$$
d(\phi_1,\phi_2)=\|\phi_1-\phi_2\|_{L^\infty(\Omega_\infty)}.
$$
Choose $v_i^*\in\mathbb X_{v_0,\infty} (i=1,2)$, we use $\tilde u_i$ to denote the solution of (\ref{1001}) associate to $v_i^*$, and then we use $\tilde v_i$ to denote the solution of (\ref{u-equation}) associate to $u_i^*$. Hence there is
\begin{align*}
\tilde v_1(t,x)-\tilde v_2(t,x)=\int_0^t&d_2\int_{\mathbb{R}}J(x-y)\left[\tilde v_1(s,y)-\tilde v_2(s,y)\right]dyds-\int_0^td_2\left[\tilde v_1-\tilde v_2\right](s,x)ds\\
&+\int_0^t\left[f_2(u_1^*,\tilde v_1)-f_2(u_2^*,\tilde v_2)\right](s,x)ds.
\end{align*}
It then follows that
\begin{align*}
\|\tilde v_1-\tilde v_2\|_{C(\Omega_\infty)}&\le 2 d_2\|\tilde v_1-\tilde v_2\|_{C(\Omega_\infty)}t+K_1\left[\|\tilde v_1-\tilde v_2\|_{C(\Omega_\infty)}+\|u_1^*-u_2^*\|_{C(\Omega_\infty)}\right]t\\
&\le\left(2 d_2+K_1\right)\|\tilde v_1-\tilde v_2\|_{C(\Omega_\infty)}T
+K_1\|\tilde u_1-\tilde u_2\|_{C(\Omega_{\tilde g,\tilde h})}T,
\end{align*}
Taking $T$ sufficiently small such that
$$
\left(2 d_2+K_1\right)T\le\frac 12, ~~\mbox{that is}~T\le T_1:=\frac{1}{2\left(2 d_2+K_1\right)},
$$
then we obtain that
\begin{equation}
\|\tilde v_1-\tilde v_2\|_{C(\Omega_\infty)}\le 2K_1\|\tilde u_1-\tilde u_2\|_{C(\Omega_{\tilde g,\tilde h})}T.
\label{U-estimate-initial}
\end{equation}

We are now in a position to give an estimate to $\|\tilde u_1-\tilde u_2\|_{C(\Omega_{\tilde g,\tilde h})}$. For any given $(t^*,x^*)\in\Omega_{\tilde g,\tilde h}$ and $t\in(0,T]$, define
$$
U(t,x^*)=\tilde u_1(t,x^*)-\tilde u_2(t,x^*),~~~~~V(t,x^*)=\tilde v_1(t,x^*)-\tilde v_2(t,x^*)$$
 and
 \begin{equation}
t_x=\left\{
\begin{aligned}
&t_{x,\tilde g}& &\text{if}~x\in[\tilde g(T),-h_0)~\text{and}~x=\tilde g(t_{x,\tilde g}),\\
&0& &\text{if}~x\in[-h_0,h_0],\\
&t_{x,\tilde h}& &\text{if}~x\in(h_0,\tilde h(T)]~\text{and}~x=\tilde h(t_{x,\tilde h}).
\end{aligned}
\right.
\label{t_x-definition}
\end{equation}

We also define
\begin{align*}
&H_1(t)=\min\left\{\tilde h_1(t),~\tilde h_2(t)\right\},~~~H_2(t)=\max\left\{
\tilde h_1(t),~\tilde h_2(t)\right\},\\
&G_1(t)=\min\left\{\tilde g_1(t),~\tilde g_2(t)\right\},~~~~~~G_2(t)=\max\left\{
\tilde g_1(t),~\tilde g_2(t)\right\},\\
&\Omega_T=\Omega_{G_1, H_2}=\Omega_{\tilde g_1, \tilde h_1}\cup \Omega_{\tilde g_2, \tilde h_2}.
\end{align*}
Three cases will be handled separately..
\begin{description}
\item[Case 1] $x^*\in[-h_0, h_0]$.
\end{description}

By the equations  of $\tilde u_i(t,x^*)$ we have that
\begin{equation}
\left\{
\begin{aligned}
&U_t(t,x^*)+c_1(t,x^*)U(t,x^*)=c_2(t,x^*)V^*(t,x^*)+I(t,x^*),\\
&U(0,x^*)=0,
\end{aligned}
\right.
\label{U-V}
\end{equation}
where $V^*(t,x^*)=v_1^*(t,x^*)-v_2^*(t,x^*)$,
\begin{equation}
c_1(t,x^*)=d_1-\frac{f_1(\tilde u_1,v_1^*)-f_1(\tilde u_2,v_1^*)}{\tilde u_1-\tilde u_2},~~~
c_2(t,x^*)=\frac{f_1(\tilde u_2,v_1^*)-f_1(\tilde u_2,v_2^*)}{v_1^*-v_2^*}
\label{definition-c(t,x)}
\end{equation}
and
\begin{align*}
I(t,x^*)=d_1\int_{\tilde g_1(t)}^{\tilde h_1(t)}J(x^*-y)\tilde u_1(t,y)dy
-d_1\int_{\tilde g_2(t)}^{\tilde h_2(t)}J(x^*-y)\tilde u_2(t,y)dy
\end{align*}
Clearly, at $(t,x^*)$ there hold
$$
\|c_1\|_{\infty},~\|c_2\|_{\infty}\le d_1+K_1(M):=K_1^*.
$$
With
$$
\|H^*\|_{C([0,T])}=\|\tilde h_1-\tilde h_2\|_{C([0,T])}+
\|\tilde g_1-\tilde g_2\|_{C([0,T])},~~M_0=\max\left\{\|u_0\|_\infty,\|v_0\|_\infty,
~K_0\right\},
$$
we also have
$$
\Big|I(t,x^*)\Big|\le d_1\|U\|_{C(\Omega_T)}
+d_1M_0\|J\|_\infty\|H^*\|_{C([0,T])}.
$$
Then we can find constant $C_1=C_1(d_1,u_0,M_0,J)$ such that
\begin{equation}
\begin{aligned}
&\max_{t\in[0,T]}\Big|I(t,x^*)\Big|\le C_1\left[
\|U\|_{C(\Omega_T)}+\|H^*\|_{C([0,T])}\right].
\end{aligned}
\label{I-II-estimate}
\end{equation}
In addition, it follows from (\ref{U-V}) that
\begin{align*}
\left[e^{\int_0^tc_1(\tau,x^*)d\tau}U\right]_t
(t,x^*)=e^{\int_0^tc_1(\tau,x^*)d\tau}\left[c_2(t,x^*)V^*(t,x^*)
+I(t,x^*)\right].
\end{align*}
Then integration from $0$ to $t^*$ immediately leads to
\begin{align*}
U(t^*,x^*)
=e^{-\int_0^{t^*}c_1(\tau,x^*)d\tau}
\int_0^{t^*}e^{\int_0^tc_1(\tau,x^*)d\tau}\left[c_2(t,x^*)V^*(t,x^*)+I(t,x^*)\right]dt.
\end{align*}
Hence
$$
\Big|U(t^*,x^*)\Big|\le e^{2K_1^*T}T\left[C_1(
\|U\|_{C(\Omega_T)}+\|H^*\|_{C([0,T])})+K_1^*\|V^*\|_{C(\Omega_\infty)}\right].
$$
Then for constant $\tilde C_1=\tilde C_1(C_1,K_1^*)>0$ there is
\begin{equation}
\|U\|_{C(\Omega_T)}\le \tilde C_1e^{2K_1^*T}T\left[\|U\|_{C(\Omega_T)}+\|H^*\|_{C([0,T])}+\|V^*\|_{C(\Omega_\infty)}\right].
\label{U-V-estimate-1}
\end{equation}

\begin{description}
\item[Case 2] $x^*\in(h_0, H_1(s))$.
\end{description}

In such a case, there exist $t_1^*,t_2^*\in(0,t^*)$ such
that $x^*=h_1(t_1^*)=h_2(t_2^*)$. Without loss of generality,
suppose that $0<t_1^*\le t_2^*$. According to (\ref{U-V}),
it is routine to check that
$$
U(t^*,x^*)=e^{-\int_{t_2^*}^{t^*}c_1(\tau,x^*)d\tau}
\left[U(t_2^*,x^*)+\int_{t_2^*}^{t^*}e^{\int_{t_2^*}^t
c_1(\tau,x^*)d\tau}\left[c_2(t,x^*)V^*(t,x^*)+I(t,x^*)\right]dt\right].
$$
Then we have
\begin{equation}
\begin{aligned}
\Big|U(t^*,x^*)\Big|&\le e^{K_1^*t^*}\left[\Big|U(t_2^*,x^*)
\Big|+\int_{t_2^*}^{t^*}e^{K_1^*t}\Big|K_1^*V^*(t,x^*)+I(t,x^*)
\Big|dt\right]\\
&\le e^{K_1^*T}\Big|U(t_2^*,x^*)\Big|+Te^{2K_1^*T}
\left[\max_{t\in[0,T]}\Big|I(t,x^*)\Big|+K_1^*\|V^*\|_{C(\Omega_\infty)}\right]\\
&\le e^{K_1^*T}\Big|U(t_2^*,x^*)\Big|+e^{2K_1^*T}
C_1\|H^*\|_{C([0,T])}T\\
&\quad\quad\quad\quad\quad\quad\quad\quad\quad\quad\quad+e^{2K_1^*T}\left[K_1^*
\|V^*\|_{C(\Omega_\infty)}+C_1\|U\|_{C(\Omega_T)}\right]T.
\end{aligned}
\label{U-estimate-2}
\end{equation}
Notice that
\begin{align*}
U(t_2^*,x^*)&=\tilde u_1(t_2^*,x^*)-\tilde u_1(t_1^*,x^*)+\tilde u_1(t_1^*,x^*)
-\tilde u_2(t_2^*,x^*)\\
&=\tilde u_1(t_2^*,x^*)-\tilde u_1(t_1^*,x^*)+\tilde u_1(t_1^*,h_1(t_1^*))-\tilde u_2(t_2^*,h_2(t_2^*))\\
&=\tilde u_1(t_2^*,x^*)-\tilde u_1(t_1^*,x^*)=\int_{t_1^*}^{t_2^*}(\tilde u_1)_t(t,x^*)dt.
\end{align*}
Using $(A2)$ to conclude that there exists a constant
$C_3=C_3(d_1,M_0,f_1)$ such that
\begin{align*}
\Big|U(t_2^*,x^*)\Big|&\le\int_{t_1^*}^{t_2^*}\left|d_1
\int_{\tilde g_1(t)}^{\tilde h_1(t)}J(x^*-y)\tilde u_1(t,y)dy-d_1\tilde u_1(t,x^*)+
f_1(\tilde u_1(t,x^*),\tilde v_1(t,x^*))\right|dt\\
&\le C_3\left(t_2^*-t_1^*\right).
\end{align*}
Obviously, we have $U(t_2^*,x^*)=0$ if $t_1^*=t_2^*$.
If $t_1^*<t_2^*$, it then follows from $\frac{\tilde h_1(t_2^*)
-\tilde h_1(t_1^*)}{t_2^*-t_1^*}\ge\mu c_1^*$(please see the proof of Theorem 2.1 for the detailed formula of $c_1^*$) that
$$
t_2^*-t_1^*\le\Big|\tilde h_1(t_2^*)-\tilde h_1(t_1^*)\Big|\left(
\mu c_1^*\right)^{-1}.
$$
In addition, there hold
$$
0=\tilde h_1(t_1^*)-\tilde h_2(t_2^*)=\tilde h_1(t_1^*)-\tilde h_1(t_2^*)+\tilde h_1(t_2^*)-h_2(t_2^*).
$$
And hence $\tilde h_1(t_2^*)-\tilde h_1(t_1^*)=\tilde h_1(t_2^*)-\tilde h_2(t_2^*)$.
$$
t_2^*-t_1^*\le\Big|\tilde h_1(t_2^*)-\tilde h_1(t_1^*)\Big|\left(\mu c_1^*
\right)^{-1}=\Big|\tilde h_1(t_2^*)-\tilde h_2(t_2^*)\Big|\left(
\mu c_1^*\right)^{-1},
$$
which further indicates that we can find a constant
$C_4=C_4(\mu c_1^*,C_3)$ such that
\begin{equation}
\Big|U(t_2^*,x^*)\Big|\le C_4\|\tilde h_1-\tilde h_2\|_{C([0,T])}.
\label{U-estimate-interior}
\end{equation}
Now, (\ref{U-estimate-2}) coupled with (\ref{U-estimate-interior}) deduces that
\begin{equation*}
\begin{aligned}
\Big|U(t^*,x^*)\Big|
&\le e^{K_1^*T}C_4\|\tilde h_1-\tilde h_2\|_{C([0,T])}+e^{2K_1^*T}
C_1\|H^*\|_{C([0,T])}T\\
&\quad\quad\quad\quad\quad\quad\quad\quad+e^{2K_1^*T}\left[
K_1^*\|V^*\|_{C(\Omega_\infty)}+C_1\|U\|_{C(\Omega_T)}\right]T.
\end{aligned}
\end{equation*}
Again we can choose constant $\tilde C_4=\tilde C_4(
K_1^*,C_1,C_4)>0$ such that
\begin{equation}
\|U\|_{C(\Omega_T)}\le\tilde C_4e^{2K_1^*T}\left[\|\tilde h_1-\tilde h_2\|_{C([0,T])}+\|H^*\|_{C([0,T])}T+\|V^*\|_{C(
\Omega_\infty)}T+\|U\|_{C(\Omega_T)}T\right].
\label{U-estimate-3-2}
\end{equation}

\begin{description}
\item[Case 3] $x^*\in[H_1(s),H_2(s))$.
\end{description}

Without loss of generality, assume that $h_1(s)<h_2(s)$,
then $H_1(s)=h_1(s)$, $H_2(s)=h_2(s)$, $\tilde u_1(t,x^*)
=0$ for $t\in[t_2^*,t^*]$, and
$$
0<h_2(t^*)-h_2(t_2^*)\le h_2(t^*)-h_1(t^*).
$$
Then we have
\begin{align*}
\tilde u_2(t^*,x^*)&=\int_{t_2^*}^{t^*}\left[d_1\int_{g_2(t)}^{h_2(t)}
J(x^*-y)\tilde u_2(t,y)dy-d_1\tilde u_2(t,x^*)+f_1(\tilde u_2(t,x^*),v_2^*(t,x^*))
\right]dt\\
&\le M_0\left[d_1+K(M_0)\right](t^*-t_2^*)\\
&\le M_0\left[d_1+K(M_0)\right]\left[\tilde h_2(t^*)-\tilde h_2(t_2^*)\right]
\left(\mu c_1^*\right)^{-1}\\
&\le M_0\left[d_1+K(M_0)\right]\left[\tilde h_2(t^*)-\tilde h_1(t^*)
\right]\left(\mu c_1^*\right)^{-1}\\
&\le C_5\|\tilde h_1-\tilde h_2\|_{C([0,T])},
\end{align*}
here constant $C_5=M_0\left[d_1+K(M_0)\right]\left(
\mu c_1^*\right)^{-1}$. And hence
\begin{equation}
\Big|U(t^*,x^*)\Big|=\Big|\tilde u_2(t^*,x^*)\Big|\le C_5\|\tilde h_1-\tilde h_2\|_{C([0,T])}.
\label{U-estimate-4}
\end{equation}
Thus, there is
\begin{equation}
\|U\|_{C(\Omega_T)}\le C_5\|\tilde h_1-\tilde h_2\|_{C([0,T])}.
\label{U-V-estimate-3}
\end{equation}
Therefore, by (\ref{U-V-estimate-1}), (\ref{U-estimate-3-2})
and (\ref{U-V-estimate-3}), we can find a constant
$C_6$ that depends on $(u_0,v_0,d_1,\mu c_1^*,f_1,J)$ such that, whether we are in Case 1, 2
or 3, for $T\le1$, we always have
\begin{equation}
\begin{aligned}
\|U\|_{C(\Omega_T)}&\le C_6e^{2K_1^*T}\left[\|H^*\|_{C([0,T])}+\|U\|_{C(\Omega_T)}T+\|V^*\|_{C(\Omega_\infty)}T\right]
\end{aligned}
\label{U-estimate}
\end{equation}

For the case that $x^*\in(G_2(s),-h_0)$ or $x^*\in(
G_1(s),G_2(s)]$, similarly, there exists  a constant
$\tilde C_6=\tilde C_6(u_0,v_0,d_1,\mu c_1^*,
f_1,J)>0$ such that~(\ref{U-estimate}) holds
with $C_6$ replaced by $\tilde C_6$.

Now let $C^*=\max\left\{C_6, \tilde C_6\right\}$, we
then have
\begin{equation}
\|U\|_{C(\Omega_T)}\le C^*e^{2K_1^*T}\left[\|H^*\|_{C([0,T])}+\|U\|_{C(\Omega_T)}T+\|V^*\|_{C(\Omega_\infty)}T\right].
\label{U-estimate-101}
\end{equation}

We are now in a position to give an estimate for $\|H^*\|_{C([0,T])}$, that is
$\|\tilde h_1-\tilde h_2\|_{C([0,T])}+\|\tilde g_1-\tilde g_2\|_{C([0,T])}$. Recalling that for $i=1,2$,
\begin{equation}
\left\{
\begin{aligned}
&\tilde h'_i(t)= \mu \int_{\tilde g_i(t)}^{\tilde h_i(t)}\int_{\tilde h_i(t)
}^{+\infty}J(x-y)dy\tilde u_i(t,x)dx,\\
&\tilde g'_i(t)=-\mu\int_{\tilde g_i(t)}^{\tilde h_i(t)}\int_{-\infty}^{\tilde g_i(t)}
J(x-y)dy\tilde u_i(t,x)dx.
\end{aligned}
\right.
\label{g-h-formula}
\end{equation}
Following the routine of the proof of Theorem 2.1 in\cite{CDLL-2018}, we obtain that
\begin{align*}
~&\left|\tilde h_1(t)-\tilde h_2(t)\right|
\\
\le~&\mu\int_0^t\left|\int_{\tilde g_1(\tau)}^{\tilde h_1(\tau)}\int_{\tilde h_1(\tau)
}^{+\infty}J(x-y)\tilde u_1(\tau,x)dydxd\tau-\int_{\tilde g_2(\tau)}^{\tilde h_2(\tau)}
\int_{\tilde h_2(\tau)}^{+\infty}J(x-y)\tilde u_2(\tau,x)dydx\right|d\tau
\\
\le~&\mu\int_0^t\int_{\tilde g_1(\tau)}^{\tilde h_1(\tau)}\int_{\tilde h_1(\tau)
}^{+\infty}J(x-y)\Big|\tilde u_1(\tau,x)-\tilde u_2(\tau,x)\Big|dydx
\\
&~+\mu\int_0^t\left|\left(\int_{\tilde h_1(\tau)}^{\tilde h_2(\tau)}\int_{\tilde h_1(\tau)}^{+\infty}
+\int_{\tilde g_2(\tau)}^{\tilde g_1(\tau)}\int_{\tilde h_1(\tau)
}^{+\infty}+\int_{\tilde g_2(\tau)}^{\tilde h_2(\tau)}\int_{\tilde h_1(\tau)}^{\tilde h_2(\tau)}\right)
J(x-y)\tilde u_2(t,x)dydx\right|d\tau
\\
\le~&3h_0\mu\|\tilde u_1-\tilde u_2\|_{C(\Omega_T)}T+\mu M_0\big(1+3h_0\|J\|_\infty\big)\|\tilde h_1-\tilde h_2\|_{C([0,T])}T
+\mu M_0\|\tilde g_1-\tilde g_2\|_{C([0,T])}T\\
\le ~ &C_0T\Big[\|\tilde u_1-\tilde u_2\|_{C(\Omega_T)}+\|\tilde h_1-\tilde h_2\|_{C([0,T])}+\|\tilde g_1-\tilde g_2\|_{C([0,T])}\Big],
\end{align*}
where $C_0$ depends only on $(h_0,\mu, u_0, J, M_0)$. Let us recall that $\tilde u_i$ is always extended by 0 in $\big([0,\infty)\times \mathbb R\big)\setminus \Omega_{\tilde g_i, \tilde h_i}$ for $i=1,2$.

Similarly, we have, for $t\in [0, T]$,
\begin{align*}
\Big|\tilde g_1(t)-\tilde g_2(t)\Big|\le C_0T\Big[\|\tilde u_1-\tilde u_2\|_{C(\Omega_T)}+\|\tilde h_1-\tilde h_2\|_{C([0,T])}+\|\tilde g_1-\tilde g_2\|_{C([0,T])}\Big].
\end{align*}
Therefore,
\begin{equation*}
\begin{aligned}
&~\|\tilde h_1-\tilde h_2\|_{C([0,T])}+\|\tilde g_1-\tilde g_2
\|_{C([0,T])}\\
\le&~2C_0T\left[\|\tilde u_1-\tilde u_2\|_{C(\Omega_T)}+\|\tilde h_1-\tilde h_2\|_{C([0,T])}+\|\tilde g_1-\tilde g_2\|_{C([0,T])}\right].
\end{aligned}
\label{20013}
\end{equation*}
If we choose $\delta_3>0$ small such that $2C_0\delta_3\le\frac 12$, then there is
\begin{equation}
\|\tilde h_1-\tilde h_2\|_{C([0,T])}+\|\tilde g_1-\tilde g_2
\|_{C([0,T])}\le4C_0T\|\tilde u_1-\tilde u_2\|_{C(\Omega_T)}~\text{for}~
T\le\delta_3.
\label{200131}
\end{equation}
Substituting (\ref{200131}) into inequality (\ref{U-estimate-101}) leads to the following
\begin{equation*}
\|U\|_{C(\Omega_T)}\le C^*e^{2K_1^*T}\left[(4C_0T+1)\|U\|_{C(\Omega_T)}+\|V^*\|_{C(\Omega_\infty)}\right]T
\end{equation*}
If we choose $\delta_4>0$ small such that
$$
C^*e^{2K_1^*\delta_4}(2C_0\delta_4+1)\delta_4\le\frac 12,
$$
then for $T\le\delta_4$, there is
\begin{equation*}
\|U\|_{C(\Omega_T)}\le \|V^*\|_{C(\Omega_\infty)}.
\end{equation*}

Back to (\ref{U-estimate-initial}), we obtain that
$$
\|\tilde v_1-\tilde v_2\|_{C(\Omega_\infty)}\le 2K_1\|\tilde u_1-\tilde u_2\|_{C(\Omega_T)}T\le2K_1 \|v_1^*-v_2^*\|_{C(\Omega_\infty)}T.
$$
Therefore, for $T<\min\left\{\frac{1}{4K_1}, T_1, \delta_1,\delta_2,\delta_3,\delta_4,1\right\}$, $\Gamma$ is a contraction mapping.
\begin{description}
\item[Step 2] Global existence and uniqueness.
\end{description}

It follows from Step 1 that problem (\ref{100100}) admits
a unique solution $(\tilde u,\tilde v,\tilde g,\tilde h)$ that define for $t\in(0,T)$, and for any given $s\in(0,T)$, there is $u(s,x)>0$ for all $x\in(g(s),h(s))$, and $v(s,x)>0$ for $x\in\mathbb{R}$. Also, $u(s,\cdot)$ and $v(s,\cdot)$ are continuous in $[g(s),h(s)]$ and $\mathbb{R}$ respectively. So if we use
$u(s,\cdot),v(s,\cdot)$ as the initial functions
and then repeat the above Step 1, the solution of (\ref{100100}) can
be extended from $t=s$ to some $T'\ge T$. Through this
extension procedure, we assume that $(0,T_{\max})$ is the
maximum existence interval of which that $(\tilde u,\tilde v,\tilde g,\tilde h)$ can be defined,
below we will prove that $T_{\max}=\infty$.

We will derive this by a contradiction. Suppose on the contrary that
$T_{\max}\in(0,\infty)$, note that
\begin{align*}
\tilde h'(t)-\tilde g'(t)= \mu \int_{\tilde g(t)}^{\tilde h(t)}\left[\int_{\tilde h(t)
}^{+\infty}+\int_{-\infty}^{\tilde g(t)}\right]J(x-y)\tilde u(t,x)dydx\le\mu M_0\left[\tilde h(t)-\tilde g(t)\right],
\end{align*}
which in turn deduces that
\begin{align*}
\tilde h(t)-\tilde g(t)\le2h_0e^{\mu M_0t}\le2h_0e^{\mu M_0T_{\max}}.
\end{align*}
Since $\tilde h(t)$ and $\tilde g(t)$ are monotone
for $t\in[0,T_{\max})$, so we define
$$
\tilde h(T_{\max})=\lim_{t\rightarrow T_{\max}}\tilde h(t),~~\tilde g(T_{\max})=\lim_{t\rightarrow T_{\max}}\tilde g(t), ~\text{ then } \tilde h(T_{\max})-\tilde g(T_{\max})\le2h_0e^{\mu M_0T_{\max}}.
$$

The free boundary conditions (resp. the third
and forth equations) in problem (\ref{101}), together with the
conclusion $0<\tilde u(t,x),\tilde v(t,x)\le M_0$ implies that
$\tilde h'(t),\tilde g'(t)\in L^\infty([0,T_{\max}))$. And hence the
definitions of $\tilde g(T_{\max})$ and $\tilde h(T_{\max})$ show that
$\tilde h(t), \tilde g(t)\in C([0,T_{\max}])$.

Also, we know that
\begin{align*}
&d_1\left[\int_{g(t)}^{h(t)}J(x-y)u(t,y)dy-u(t,x)\right]+f_1(u,v)
\in L^\infty(\Omega_{F}^{T_{\max}}),\\
&d_2\left[\int_{\mathbb{R}}J(x-y)v(t,y)dy-v(t,x)\right]+f_2(u,v)
\in L^\infty(\Omega_{\infty}^{T_{\max}}),
\end{align*}
where $\Omega_{F}^{T_{\max}}=\{(t,x)\in\mathbb{R}^2:t\in[0,T_{\max}],~x\in
(\tilde g(t),\tilde h(t))\}$ and $\Omega_{\infty}^{T_{\max}}=\{(t,x)\in\mathbb{R}^2: t\in[0,T_{\max}],~x\in\mathbb{R}\}$. Then $\tilde u_t(t,x)\in L^\infty(
\Omega_{F}^{T_{\max}})$ and $\tilde v(t,x)\in L^\infty(\Omega_{\infty}^{T_{\max}})$. Hence for each $x\in(\tilde g(T_{\max}),\tilde h(T_{\max}))$,
$\tilde u(T_{\max},x):=\lim_{t\nearrow~T_{\max}}\tilde u(t,x)$ exists, and for each $x\in{\mathbb{R}}$, $\tilde v(T_{\max},x)=
\lim_{t\nearrow~T_{\max}}\tilde v(t,x)$ exists. In addition $
\tilde u(\cdot,x)$ and $\tilde v(\cdot,x)$ are continuous for $t=T_{\max}$.

Now for the known $(\tilde v, \tilde g,\tilde h)$ and $t_x$ defined in (\ref{t_x-definition}) with $T$ replaced by $T_{\max}$, if we regard $\tilde u$ as the unique solution of the following ODEs
\begin{equation*}
\left\{
\begin{aligned}
 &\tilde u_t=d_1\int_{\tilde g(t)}^{\tilde h(t)}J(x-y)\phi(t,y)dy-d_1\tilde u(t,x)+f_1(u,\tilde v),& &t_x<t\le T_{\max},\\
&\tilde u(t_x,x)=\tilde u_0(x),& &x\in(\tilde g(T_{\max}),\tilde h(T_{\max}))
\end{aligned}
\right.
\end{equation*}
with $\phi=\tilde u$, and $\tilde u_0(x)=u_0(x)$ if $x\in[-h_0,h_0]$ and $\tilde u_0(x)=0$ if $x\not\in[-h_0,h_0]$, since $t_x$, $J(\cdot)$
and $f_1(u,\tilde v)$ all are continuous functions
of the spacial variable $x$, then it follows from the
continuous dependence of the ODE solution on the initial
function and the parameters involved in the equation
that the pair $\tilde u$ is continuous in $\Omega_{F}^{T_{\max}}$.
Hence for any $s\in(0,T_{\max})$, we get that $\tilde u\in C(
\bar\Omega_{F}^s)$. For such a $s$, we can get that $\tilde v\in C(\Omega_\infty^s)$ in a paralle way.

Next, we will prove that $(\tilde u,\tilde v)$ is continuous at $t=T_{\max}$.
For the continuity of $\tilde u$, it is suffices to prove that $\tilde u(t,x)\rightarrow0$ for $(t,x)\rightarrow(T_{\max},\tilde g(T_{\max}))$ and $(t,x)\rightarrow(T_{\max}, \tilde h(T_{\max}))$.
We just show the case that $(t,x)\rightarrow(T_{\max},\tilde g(T_{\max}))$, the remained one can be get analogously. As we see below,  if $(t,x)\rightarrow(T_{\max},\tilde g(T_{\max}))$, there hold
\begin{align*}
\left|\tilde u(t,x)\right|&=\left|\int_{t_x}^t\left[d_1
\int_{\tilde g(\tau)}^{\tilde h(\tau)}J(x-y)\tilde u(\tau,y)dy-d_1\tilde u(\tau,x)
+f_1(\tilde u,\tilde v)\right]d\tau\right|\\
&\le\left(t-t_x\right)\left[2d_1+K_1\right]M_0\rightarrow0
\end{align*}
since $t_x\rightarrow T_{\max}$ if $x\rightarrow\tilde g(T_{\max})$.
In addition, as $t_x\rightarrow T_{\max}$, there holds, for each $x\in\mathbb{R}$,
$$
\left|\tilde v(t,x)-\tilde v(t_x,x)\right|\le\left(t-t_x\right)\left[2d_2
+K_1\right]M_0\rightarrow0.
$$
Then $(\tilde u,\tilde v)\in C(\bar\Omega_{F}^{T_{\max}})
\times C(\Omega_{\infty}^{T_{\max}})$, and $(\tilde u,\tilde v,\tilde g,\tilde h)$ verifies problem (\ref{101}) for $t\in(0,T_{\max})$. Again, Lemma \ref{Maximum-Principle} shows that
$\tilde u(T_{\max},x)>0, \tilde v(T_{\max},x)>0$ in $(\tilde g(T_{\max}),
\tilde h(T_{\max}))$. Now if we use $(\tilde u(T_{\max},x), \tilde v(T_{\max},x))$ as the initial function and then take Step 1, so the solution of
(\ref{101}) can be extended to interval $(0,\tilde T)$
with $\tilde T>T_{\max}$, which contradicts to the definition
of $T_{\max}$. Then $T_{\max}=\infty$ follows.
This completes the proof.
\end{proof}

Following is a comparison principle for the competitive model.
\begin{theorem}
{\rm (Comparison principle)}
Assume that {\rm\bf(J)} and {\rm\bf(A1)-(A2)} hold. For $T\in(0,\infty)$, suppose that $\overline h,\overline
g,\underline h,\underline g\in C([0,T])$, $\overline u\in C(D^*_T)\cap C(\overline D^*_T)$ with $D^*_T=\left\{
0<t\leq T,\overline g(t)<x<\overline h(t)\right\}$,
$\underline u\in C(D^{**}_T)\cap C(\overline D^{**}_T)$
with $D^{**}_T=\left\{0<t\leq T,~\underline g(t)<x<\underline h(t)\right\}$,
and $\overline v,\underline v\in (C\cap L^\infty)([0,T]\times\mathbb{R})$ satisfying
\begin{equation}
\left\{
\begin{aligned}
&\overline u_t\ge d_1\left[\int_{\overline g(t)}^{\overline h(t)}J(x-y)\overline u(t,y)dy-\overline u\right]+f_1(\overline u,\underline v),
& &0<t\le T,~x\in(\overline g(t),\overline h(t)),\\
&\overline v_t\ge d_2\left[\int_{\mathbb{R}}J(x-y)\overline v(t,y)dy-\overline v\right]+f_2(\underline u,\overline v),
& &0<t\le T,~x\in\mathbb{R},\\
&\underline u_t\le d_1\left[\int_{\underline g(t)}^{\underline h(t)}J(x-y)\underline u(t,y)dy-\underline u \right]+f_1(\underline u,\overline v),
& &0<t\le T,~x\in(\underline g(t),\underline h(t)),\\
&\underline v_t\le d_2\left[\int_{\mathbb{R}}J(x-y)\underline v(t,y)dy-\underline v\right]+f_2(\overline u,\underline v),
& &0<t\le T,~x\in\mathbb{R}
\end{aligned}
\right.
\label{210}
\end{equation}
with $\underline v(0,x)\le v_0(x)\le\overline v(0,x)$ in $\mathbb{R}$, $\overline u(0,x)\ge u_0(x)$ in $[-h_0,h_0]$, $\underline u(0,x)\le u_0(x)$ in $[\underline g(0),\underline h(0)]$, and
\begin{equation}
\left\{
\begin{aligned}
&\overline h'(t)\ge\mu\int_{\overline g(t)}^{\overline h(t)}
\int_{\overline h(t)}^{+\infty}J(x-y)\overline u(t,x)dydx,& &0<t\le T,\\
&\overline g'(t)\le-\mu\int_{\overline g(t)}^{\overline h(t)}
\int_{-\infty}^{\overline g(t)}J(x-y)\overline u(t,x)dydx,& &0<t\le T,\\
&\underline h'(t)\le\mu\int_{\underline g(t)}^{\underline h(t)}
\int_{\underline h(t)}^{+\infty}J(x-y)\underline u(t,x)dydx,& &0<t\le T,\\
&\underline g'(t)\le-\mu\int_{\underline g(t)}^{\underline h(t)}
\int_{-\infty}^{\underline g(t)}J(x-y)\underline u(t,x)dydx,& &0<t\le T
\end{aligned}
\right.
\label{211}
\end{equation}
with $\overline h(0)\ge h_0\ge\underline h(0)$ and
$\overline g(0)\le-h_0\le\underline g(0)$. Further, we assume that
$\overline u(t,x)=0$ if $x\not\in(\overline g(t),\overline h(t))$ and $\underline u(t,x)=0$ if $x\not\in(\underline g(t),\underline h(t))$.
Then the unique solution $(u,v,g,h)$ of problem (\ref{101}) satisfies
\begin{equation}
\left\{
\begin{aligned}
&u\le\overline u,~v\ge\underline v,~g\ge\overline g~\text{ and }~h
\le\overline h~\text{ for }~0<t\leq T~\text{ and }~x\in\mathbb{R},\\
&u\ge\underline u,~v\le\overline v,~g\le\underline g~\text{ and }~h
\ge\underline h~\text{ for }~0<t\leq T~\text{ and }~x\in\mathbb{R}.
\label{212}
\end{aligned}
\right.
\end{equation}
\label{Comparison-Principle-1}
\end{theorem}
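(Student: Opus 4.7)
By symmetry of the hypotheses it is enough to establish the first set of inequalities in \eqref{212}; the second follows by swapping the roles of the upper/lower barriers and running the same argument. The overall plan proceeds in three stages: secure a maximal sub-interval $[0,t^*]$ on which the free boundaries remain strictly ordered, simultaneously obtain the pointwise comparison of $(u,v)$ against $(\overline u,\underline v)$ on that interval via a cooperative nonlocal maximum principle, and finally use the free boundary formulas to force $t^*=T$.

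\textbf{Setup of $t^*$ and coupled comparison.} Set
$$
t^*:=\sup\{t\in(0,T]:\overline g(s)<g(s)\text{ and }h(s)<\overline h(s)\text{ for all }s\in(0,t]\}.
$$
Since $\overline h(0)\ge h_0$, $\overline g(0)\le-h_0$, and because the right-hand sides of the free boundary integrals for $h'$ and $g'$ at $t=0$ are strictly positive (thanks to $u_0>0$ in $(-h_0,h_0)$ and $J(0)>0$), a short-time perturbation argument gives $t^*>0$. On $[0,t^*]$ one has $[g(t),h(t)]\subset[\overline g(t),\overline h(t)]$, so with $W:=\overline u-u$ (extending $u$ by $0$ off its domain) and $Z:=v-\underline v$, subtraction of the inequalities in \eqref{210} and (A2) produce
$$
W_t\ge d_1\Bigl[\int_{\overline g(t)}^{\overline h(t)}J(x-y)W(t,y)\,dy-W\Bigr]+\alpha(t,x)W+\beta(t,x)Z,
$$
$$
Z_t\ge d_2\Bigl[\int_{\mathbb R}J(x-y)Z(t,y)\,dy-Z\Bigr]+\gamma(t,x)W+\delta(t,x)Z,
$$
with bounded coefficients and, crucially, $\beta,\gamma\ge0$. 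The latter comes from the competitive signs $\partial_vf_1=-c_1u\le0$ and $\partial_uf_2=-b_2v\le0$, which make $f_1(u,\underline v)-f_1(u,v)$ and $f_2(u,v)-f_2(\overline u,v)$ nonnegative whenever $W,Z\ge 0$. Since the boundary and initial data for both $W$ and $Z$ are nonnegative, setting $(\tilde W,\tilde Z)=e^{-Kt}(W,Z)$ with $K$ large enough to absorb the diagonal terms and then iterating Lemma \ref{Maximum-Principle} on $\Omega_{\overline g,\overline h}$ for $\tilde W$ alternately with a standard nonlocal maximum principle on $\mathbb R$ for $\tilde Z$ yields $W\ge 0$ and $Z\ge 0$ on $[0,t^*]$.

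\textbf{Ruling out $t^*<T$.} If $t^*<T$, then at $t^*$ either $\overline h(t^*)=h(t^*)$ or $\overline g(t^*)=g(t^*)$; treat the former, the latter being symmetric. Subtracting the inequality for $\overline h'(t^*)$ from the equality for $h'(t^*)$ and using $\overline u\ge u$ on $[g(t^*),h(t^*)]$, $\overline u\ge u=0$ on $[\overline g(t^*),g(t^*)]$ together with $\overline h(t^*)=h(t^*)$ gives
$$
(\overline h-h)'(t^*)\ge\mu\int_{g(t^*)}^{h(t^*)}\int_{h(t^*)}^{+\infty}J(x-y)\bigl[\overline u(t^*,x)-u(t^*,x)\bigr]\,dy\,dx\ge 0.
$$
On the other hand $\overline h-h>0$ on $(0,t^*)$ with $(\overline h-h)(t^*)=0$ forces $(\overline h-h)'(t^*)\le 0$. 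To convert this near-contradiction into a genuine one we pass to a perturbed upper solution, replacing $(\overline u,\overline h,\overline g)$ by a slightly larger pair $(\overline u+\varepsilon,\overline h+\varepsilon(1+t),\overline g-\varepsilon(1+t))$ that still satisfies \eqref{210}--\eqref{211}, rerunning the previous steps to get $u\le\overline u+\varepsilon$, $h<\overline h+\varepsilon(1+t)$ and $g>\overline g-\varepsilon(1+t)$ on all of $[0,T]$, and letting $\varepsilon\downarrow0$.

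\textbf{Main obstacle.} The core difficulty is the coupling in the middle step: because $\overline u$ is driven by $\underline v$ (and $v$ by $u$, not by $\overline u$), the bounds $u\le\overline u$ and $v\ge\underline v$ can only be extracted simultaneously. The competitive signs of $\partial_vf_1$ and $\partial_uf_2$ are precisely what turn the difference system for $(W,Z)$ into a cooperative one, allowing the scalar maximum principle of Lemma \ref{Maximum-Principle} (which was designed for a single unknown on a moving domain) to be applied componentwise after an exponential shift. A secondary but necessary technicality is the $\varepsilon$-perturbation of the upper solution in the final step, used to upgrade the weak inequality for $(\overline h-h)'(t^*)$ into a strict separation of the free boundaries on the full interval $[0,T]$.
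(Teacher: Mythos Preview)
Your overall architecture matches the paper's, but the final ``Ruling out $t^*<T$'' step has a genuine gap. You assert that $(\overline u+\varepsilon,\ \overline h+\varepsilon(1+t),\ \overline g-\varepsilon(1+t))$ ``still satisfies \eqref{210}--\eqref{211}'', and this is not true in general. For the first inequality of \eqref{210}, subtracting the two versions one needs
\[
d_1\varepsilon\Big(1-\int_{\overline g-\varepsilon(1+t)}^{\overline h+\varepsilon(1+t)}J(x-y)\,dy\Big)\;\ge\;f_1(\overline u+\varepsilon,\underline v)-f_1(\overline u,\underline v),
\]
and for the Lotka--Volterra nonlinearity $f_1(u,v)=u(a_1-b_1u-c_1v)$ the right side is approximately $\varepsilon(a_1-2b_1\overline u-c_1\underline v)$, which is strictly positive wherever $\overline u$ and $\underline v$ are small, while the left side can be made arbitrarily small once the domain is wide enough that $\int J\approx 1$. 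The free boundary inequalities \eqref{211} fail for the same reason: enlarging the integration domain and adding $\varepsilon$ to $\overline u$ increases the right side by an amount not controlled by the $\varepsilon$ you added to $\overline h'$. So the perturbed triple is not an upper solution, and you cannot rerun the argument for it.

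The paper resolves the equality $(\overline h-h)'(t^*)=0$ differently, in two pieces. First, it assumes the \emph{strict} initial ordering $\overline h(0)>h_0$, $\overline g(0)<-h_0$; on $[0,t^*]$ it then upgrades $W=\overline u-u\ge 0$ to $W>0$ in the open region via the \emph{strong} part of Lemma~\ref{Maximum-Principle}, which makes the boundary integral in your display strictly positive and gives a true contradiction. Second, for the borderline case $\overline h(0)=h_0$ it does \emph{not} perturb the upper solution but instead perturbs the \emph{solution}: it solves \eqref{101} with initial interval $[-h_0(1-\epsilon),h_0(1-\epsilon)]$ (so the strict case applies), obtains the comparison for $(u_\epsilon,v_\epsilon,g_\epsilon,h_\epsilon)$, and passes to the limit $\epsilon\to 0$ using continuous dependence. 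This is the step you should replace your $\varepsilon$-perturbation with. As a secondary point, your coupled maximum principle (``iterating Lemma~\ref{Maximum-Principle} alternately'') is too vague to stand alone: since the $\beta Z$ and $\gamma W$ cross-terms prevent treating $W$ and $Z$ separately, the paper argues for both simultaneously via a single minimum-point argument on $[-l,l]$ after the localization $\overline U=U+\tilde M(x^2+t)/l^2$ (needed because $Z$ lives on the unbounded line), and then sends $l\to\infty$.
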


\begin{proof}
The idea of the proof comes from \cite[Lemma 2.6]{DYH2014}.
Since the results involving $(\overline u,\underline v,
\overline g,\overline h)$ and $(\underline u,\overline v,
\underline g,\underline h)$ can be obtained in a similar
manner, so we only show the proof of $u\le\overline u,~v\ge
\underline v,~g\ge\overline g$ and $h\le\overline h$. Assume
that $\overline v$ and $v$ are bounded above by $\tilde M$
in $[0,T]\times\mathbb{R}$, letting $w=\tilde M-v$ and
$\overline w=\tilde M-\underline v$, then $(\overline u,
\overline w,\overline g,\overline h)$ satisfies
\begin{equation}
\left\{
\begin{aligned}
&\overline u_t-d_1\left[\int_{\overline g(t)}^{\overline h(t)}J(x-y)\overline u(t,y)dy-\overline u\right]
\ge f_1(\overline u,\tilde M-\overline w),& &0<t\le T,
~x\in(\overline g(t),\overline h(t)),\\
&\overline w_t-d_2\left[\int_{\mathbb{R}}J(x-y)\overline w(t,y)dy-\overline w\right]
\ge-f_2(\overline u,\tilde M-\overline w),& &0<t\le T,~x\in\mathbb{R},\\
&\overline u(0,x)\ge u_0(x),& &x\in[-h_0,h_0],\\
&\overline w(0,x)=\tilde M-\underline v(0,x)\ge\tilde M-v_0(x),
& &x\in\mathbb{R}.
\end{aligned}
\right.
\label{213}
\end{equation}

We now state that $\overline u\ge0$ over the region $D_T^*$
and $\overline w\ge0$ over the region $[0,T]\times\mathbb{R}$. We only give the proof of
$\overline u\ge0$  in $D_T^*$ since the proof for $\overline w\ge0$ is parallel.

Let $u^1(t,x)=\overline u(t,x)e^{k_1t}$,
in which $k_1>0$ is a constant to be determined later. Then
for all $(t,x)\in D_T^*$, there is
\begin{equation}
u^1_t\ge d_1\int_{\overline g(t)}^{\overline h(t)}J(x-y)u^1(t,y)dy+\left[k_1-d_1+f_{1,u}(\eta,\tilde M-\overline w)\right]u^1(t,x),
\label{214}
\end{equation}
where $\eta(t,x)$ is between $\overline u$ and 0. Now, we
choose $k_1$ is large such that $p(t,x)=k_1-d_1+f_{1,u}(\eta,
\tilde M-\overline w)>0$ for all $(t,x)\in D_T^*$.

Taking $p_0=\sup_{(t,x)\in D_T^*}p(t,x)$ and $T_1=\min
\left\{T,~\frac{1}{d_1+p_0}\right\}$. Suppose on the contrary
that there are $\tilde t\in(0,T_1]$ and $\tilde x\in(\overline
g(\tilde t),\overline h(\tilde t))$ such that $u^1(\tilde t,
\tilde x)<0$. Then
$$
u^1_{\min}=\min_{0<t\le T_1,~x\in(\overline g(t),\overline h(t))}u^1(t,x)<0.
$$
Assume that $u^1_{\min}$ is attained at $(t_1,x_1)$ for $t_1\in(0,T_1]$ and
$x_1\in(g(t_1),h(t_1))$. For $0<t\le t_1$ and $x\in[g(t),h(t)]$, define
\begin{equation*}
u^1(t_x,x)=\left\{
\begin{aligned}
&0,& &x\not\in[-h_0,h_0],\\
&u_0(x),& &x\in[-h_0,h_0],
\end{aligned}
\right.
~~~\text{and }~~~t_x=\left\{
\begin{aligned}
&t_{x,g},& &x\in[g(t),-h_0),\\
&0,& &x\in[-h_0,h_0],\\
&t_{x,h},& &x\in(h_0,h(t)],
\end{aligned}
\right.
\end{equation*}
where $0<t_{x,g}<t_1$ and $0<t_{x,h}<t_1$ have the same meaning as them in (\ref{t_x-definition}). Integrating (\ref{214}) from $t_{x_1}$ to $t_1$ yields that
\begin{align*}
u^1(t_1,x_1)-u^1(t_{x_1},x_1)&\ge d_1\int_{t_{x_1}}^{t_1}\int_{\overline g(t)}^{\overline h(t)}J(x_1-y)u^1(t,y)dydt+\int_{t_{x_1}}^{t_1}p(t,x_1)u^1(t,x_1)dt\\
&\ge d_1\int_{t_{x_1}}^{t_1}\int_{\overline g(t)}^{\overline h(t)}J(x_1-y)u^1_{\min}dydt+\int_{t_{x_1}}^{t_1}p(t,x_1)u^1_{\min}dt\\
&\ge\left(t_1-t_{x_1}\right)(d_1+p_0)u^1_{\min}
\end{align*}
Since $u^1(t_{x_1},x_1)=e^{kt_{x_1}}\bar u(t_{x_1},x_1)\ge0$, then
$$
u^1(t_1,x_1)=u^1_{\min}>t_1(d_1+p_0)u^1_{\min}.
$$
And hence $t_1(d_1+p_0)>1$, that is $t_1>\frac{1}{d_1+p_0}$, which contradicts to our choice of $t_1$. It then follows that $u^1(t,x)\ge0$ for all
$(t,x)\in D_{T_1}^*$, and then $\overline u(t,x)\ge0$ for
$(t,x)\in D_T^*$ by repeating this process and each time, the time interval can be extended by $T_1$ units, and then $\bar u(t,x)\ge0$ for all $(t,x)\in D_T^*$.
Analogously, we have $\bar w(t,x)\ge0$ for all $(t,x)\in [0,T]\times\mathbb{R}$.

Suppose that $\underline h(0)<h_0<\overline h(0)$
and $\underline g(0)>-h_0>\overline g(0)$, and claim that
$h(t)<\overline h(t)$ and $g(t)>\overline g(t)$ (resp.
$h(t)>\underline h(t)$ and $g<\underline g(t)$) for all
$t\in(0,T]$.

Clearly, it is true for small $t>0$. If our claim
does not hold, then we can find a first $t^*\in(0,T]$ such
that $h(t)<\overline h(t)$, $g(t)>\overline g(t)$ for all
$t\in(0,t^*)$, and $h(t^*)=\overline h(t^*)$,
$g(t^*)=\overline g(t^*)$ hold (resp. $h(t)>\underline h(t)$,
$g(t)<\underline g(t)$ for all $t\in(0,t^*)$, and
$h(t^*)=\underline h(t^*)$, $g(t^*)=\underline g(t^*)$ hold).

Letting $U(t,x)=\left(\overline
u-u\right)e^{-k_2t}$ and $W(t,x)=\left(\overline w-w\right)e^{-k_2t}$,
then we get
\begin{equation*}
\left\{
\begin{aligned}
&U_t-d_1\left[\int_{g(t)}^{h(t)}J(x-y)U(t,y)dy-U\right]\ge\left(a-k_2\right)U+bW,& &0<t\le t^*,~x\in(g(t),h(t)),\\
&W_t-d_2\left[\int_{\mathbb{R}}J(x-y)W(t,y)-W\right]\ge cU+\left(d-k_2\right)W,
& &0<t\le t^*,~x\in\mathbb{R},\\
&U(0,x)=\overline u(0,x)-u_0(x)\ge0,& &x\in[-h_0,h_0],\\
&W(0,x)=\overline w(0,x)-w(0,x)=v_0(x)-\underline v(0,x)\ge0,
& &x\in\mathbb{R}
\end{aligned}
\right.
\label{215}
\end{equation*}
with
\begin{align*}
&a=a(t,x)=f_{1,u}\left(\theta_1\overline u+(1-\theta_1)u,
\tilde M-\overline w\right)~\text{ for }~0<\theta_1<1,\\
&b=b(t,x)=-f_{1,v}(u,\theta_2(\tilde M-\overline w)+(1-\theta_2)(\tilde M-w))~\text{ for }~0<\theta_2<1,\\
&c=c(t,x)=-f_{2,u}((\theta_3\overline u+(1-\theta_3)u,\tilde M-\overline w))~\text{ for }~0<\theta_3<1,\\
&d=d(t,x)=f_{2,v}(u,\theta_4(\tilde M-\overline w)+(1-\theta_4)(\tilde M-w))~\text{ for }~0<\theta_4<1.
\end{align*}
And $k_2>0$ is sufficiently large such that
$$
k_2\ge1+|a(t,x)|+b(t,x)+c(t,x)+|d(t,x)|\text{ for }0<t\le t^*~\text{and}~x\in\mathbb{R}.
$$
Note that $a(t,x),b(t,x),c(t,x)$ and $d(t,x)$ all are
bounded and $b(t,x),c(t,x)\ge0$ for $0<t\le t^*$ and
$x\in\mathbb{R}$.

For given $l$ with $l>h(t^*)$ and $-l<g(t^*)$, by setting
$$
\overline U(t,x)=U(t,x)+\frac{\tilde M(x^2+t)}{l^2},~~
\overline W(t,x)=W(t,x)+\frac{\tilde M(x^2+t)}{l^2},
$$
then we obtain that
\begin{equation*}
\left\{
\begin{aligned}
&\overline U_t-d_1\left[\int_{g(t)}^{h(t)}J(x-y)\overline U(t,y)dy-\overline U
\right]\ge\left(a-k_2\right)\overline U+b\overline W,
& &0<t\le t^*,~x\in(g(t),h(t)),\\
&\overline W_t-d_2\left[\int_{\mathbb{R}}J(x-y)\overline W(t,y)dy-\overline W\right]\ge c\overline U+\left(d-k_2\right)\overline W,
& &0<t\le t^*,~x\in\mathbb{R},\\
&\overline U(0,x)=U(0,x)+\frac{\tilde Mx^2}{l^2}>0,& &x\in[-h_0,h_0],\\
&\overline W(0,x)=W(0,x)+\frac{\tilde Mx^2}{l^2}>0,& &-l<x<l.
\end{aligned}
\right.
\label{216}
\end{equation*}
Now we will show that
$$
\min\left\{\min_{(t,x)\in[0,t^*]\times[-l,l]}\overline U(t,x),\ \min_{(t,x)\in[0,t^*]\times
[-l,l]}\overline W(t,x)\right\}:=\tau^*\ge0.
$$
If $\tau^*<0$,
then there exist $0<t_1\le t^*$ and $g(t_1)<x_1<h(t_1)$ such
that $\overline U(t_1,x_1)=\tau^*<0$, or there exist $0<t_2
\le t^*$ and $-l<x_2<l$ such that $\overline W(t_2,x_2)=\tau^*<0$.
Assume that the former case occurs, then $\overline U^*(t,x)$
and $\overline W^*(t,x)$ defined respectively by $\overline
U^*=\overline Ue^{k_3t}$ and $\overline W^*=\overline We^{k_3t}$
($k_3$ is a positive constant will be determined later) satisfy
\begin{equation}
\overline U^*_t\ge d_1\int_{g(t)}^{h(t)}J(x-y)\overline U^*(t,y)dy+q(t,x)\overline U^*(t,x)+b(t,x)\overline W^*(t,x),
\label{217}
\end{equation}
with $q(t,x):=k_3-d_1+a(t,x)-k_2$. Now we choose $k_3>0$ large
such that $q(t,x)>0$ for all $(t,x)\in[0,t^*]\times[-l,l]$.

As the proof of $u^1(t,x)\ge0$, taking
$$
q_0=\sup_{(t,x)\in[0,t^*]\times[-l,l]}q(t,x)~\text{ and }~T_2=\min\left\{t_1,~\frac{1}{d_1+q_0+\max_{[0,t^*]\times[-l,l]}b(t,x)}\right\}.
$$
Since $\overline U(t_1,x_1)=\tau^*<0$,
then  we can find $t_3\in(0,T_2)$ such that
$$
\overline U^*_{\inf}=\inf_{0\le t\le t_3,~x\in[-l,l]}\overline U^*(t,x)<0.
$$
It is noticed we can find sequences $t_n\in(0,t_3]$ and
$x_n\in[-l,l]$ such that $\overline U^*(t_n,x_n)\rightarrow
\overline U^*_{\inf}$ as $n\rightarrow\infty$.
Integrating (\ref{217}) from $0$ to $t_n$ yields that
\begin{align*}
&~\overline U^*(t_n,x_n)-\overline U^*(0,x_n)\\
\ge~&d_1\int_0^{t_n}\int_{\mathbb{R}}J(x_n-y)\overline U^*(t,y)dydt+\int_0^{t_n}\left(q(t,x_n)\overline U^*(t,x_n)
+b(t,x_n)\overline W^*(t,x_n)\right)dt\\
\ge~&d_1t_n\overline U^*_{\inf}+q_0t_n\overline U^*_{\inf}
+b(t,x)t_n\overline U^*_{\inf}
\end{align*}
Since $\overline U^*(0,x_1)=\overline U(0,x_1)>0$, then as $n\rightarrow\infty$, there holds
$$
\overline U^*_{\inf}>d_1t_n\overline U^*_{\inf}+q_0t_n
\overline U^*_{\inf}+b(t,x)t_n\overline U^*_{\inf}\ge t_n\left(d_1+q_0
+\max_{[0,t^*]\times[-l,l]}b(t,x)\right)\overline U^*_{\inf}.
$$
That is
$$
t_n\left(d_1+q_0
+\max_{[0,t^*]\times[-l,l]}b(t,x)\right)\ge1,
$$
and hence $t_3\ge\frac{1}{d_1+q_0+\max_{[0,t^*]\times[-l,l]}b(t,x)}$,
which contradicts to our choice
of $t_3$. Then there must hold $\overline U^*\ge0$ and
$\overline U\ge0$ in $[0,t^*]\times [-l,l]$.

For the second case that there exist $0<t_2\le t^*$
and $-l<x<l$ such that $\overline W(t_2,x_2)=\tau^*<0$,
we also can get the same conclusion. From now on, we
have obtained that $\overline U\ge0$ and $\overline W\ge0$
in $[0,t^*]\times [-l,l]$. It then follows that
$$
U(t,x)\ge-\frac{\tilde M(x^2+t)}{l^2}~\text{ and }~
W(t,x)\ge-\frac{\tilde M(x^2+t)}{l^2}
$$
in $[0,t^*]\times [-l,l]$. By taking $l\rightarrow\infty$
immediately yields that $U(t,x)\ge0$ and $W(t,x)\ge0$ for
all $(t,x)\in[0,t^*]\times\mathbb{R}$, and therefore
$\overline u\ge u$ and $\overline w\ge w$ in $[0,t^*]\times
\mathbb{R}$. By applying the above argument over $[0,T]\times
\mathbb{R}$, we have $\overline u\ge u$ and $\overline w\ge w$
in $[0,T]\times\mathbb{R}$.

For $(t,x)\in\Omega_{t^*}:=\left\{(t,x)\in\mathbb{R}^2:
0<t\le t^*,g(t)<x<h(t)\right\}$, letting $Z=\left(\overline
u-u\right)e^{k_4t}$, then as in the proof of $\overline u
\ge0$ we get $Z(t,x)\ge0$ in $\Omega_{t^*}$. Also, there is
$Z(0,x)=\overline u(0,x)-u(0,x)\ge,\not\equiv0$, then $Z(t,x)>0$
and in turn $\overline u(t,x)>u(t,x)$ in $\Omega_{t^*}$.

On
the other hand, we have
\begin{align*}
&0\ge\overline h'(t^*)-h'(t^*)\ge\mu\int_{\overline g(t^*)}^{\overline h(t^*)}\int_{\overline h(t^*)}^{+\infty}J(x-y)\left(
\overline u-u\right)(t^*,x)dydx>0,\\
&0\le\overline g'(t^*)-g'(t^*)\le-\mu\int_{\overline g(t^*)
}^{\overline h(t^*)}\int_{-\infty}^{\overline g(t^*)}J(x-y)
\left(\overline u-u\right)(t^*,x)dydx<0,
\end{align*}
contradictions happen. Then $h(t)<\overline h(t)$ and
$g(t)>\overline g(t)$ for all $t\in(0,T]$ (resp. $h(t)>\underline h(t)$,
$g(t)<\underline g(t)$ for all $t\in(0,T]$ hold), so the claim
is true.

For the case that $h_0=\overline h(0)$ and $-h_0=\overline
g(0)$. Let $(u_\epsilon,v_\epsilon,g_\epsilon,h_\epsilon)$
with $\epsilon>0$ small be the unique positive solution to
(\ref{101}) with $h_0$ and $-h_0$ are respectively replaced
by $h_0(1-\epsilon)$ and $-h_0(1+\epsilon)$. Using the continuous
dependence of solutions on the parameters, we find that
$(u_\epsilon,v_\epsilon,g_\epsilon,h_\epsilon)\rightarrow
(u,v,g,h)$ as $\epsilon\rightarrow0$, and $(u,v,g,h)$ is
the unique solution of (\ref{101}). Then the results can
be obtained by letting $\epsilon\rightarrow0$ in the inequalities $u_\epsilon\le\overline u$, $v_\epsilon\ge\underline v$,
$g_\epsilon>\overline g$ and $h_\epsilon<\overline h$.
\end{proof}

Following are two essential conclusions to be used later.
{\begin{lemma}
\label{lemma-MP2}
{\rm (\cite[Lemma 3.3]{CDLL-2018})}
Assume that {\rm\bf(J)} holds, and  $h_0, T>0$.
Suppose that
 $u(t,x)$ as well as $u_t(t,x)$ are
continuous in $\Omega_0:=[0, T]\times [-h_0, h_0]$, and  for some $c\in L^\infty (\Omega_0)$,
\begin{equation*}\label{lemma-MP2-u}
\left\{
\begin{aligned}
&u_t(t,x)\ge d\int_{-h_0}^{h_0}J(x-y)u(t,y)dy-du +c(t,x)u, && t\in (0, T],\  x\in [-h_0, h_0],\\
&u(0,x)\ge0,  && x\in [-h_0,  h_0].
\end{aligned}
\right.
\end{equation*}
Then $u(t,x)\ge0$ for all $0\le t\le T$ and $x\in[-h_0, h_0]$.
Moreover, if $u(0,x)\not\equiv0$ in $[-h_0, h_0]$, then $u(t,x)>0$ in $(0, T]\times [-h_0, h_0]$.
\end{lemma}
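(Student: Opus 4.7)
\noindent The plan is to split the statement into the non-negativity assertion and the strict-positivity assertion, and to handle each by the exponential-substitution-plus-contradiction scheme already used for the auxiliary function $u^1$ in Step 1 of the proof of Theorem~\ref{u-v-g-h-exist}.

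\noindent For non-negativity, I would set $w(t,x):=u(t,x)e^{Kt}$ with $K>d+\|c\|_{L^\infty(\Omega_0)}$, which transforms the hypothesis into
\[
w_t(t,x) \;\ge\; d\int_{-h_0}^{h_0} J(x-y)\,w(t,y)\,dy \;+\; p(t,x)\,w(t,x),
\]
where $p(t,x):=K-d+c(t,x)$ is bounded between some $p_0>0$ and $p_1<\infty$, and $w(0,x)=u(0,x)\ge 0$. Suppose for contradiction that $w_{\min}:=\min_{[0,T_1]\times[-h_0,h_0]} w < 0$ for $T_1:=\min\{T,\,1/(2(d+p_1))\}$, attained at some $(t_1,x_1)$; since $w(0,\cdot)\ge 0$, necessarily $t_1>0$. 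Integrating the differential inequality in time from $0$ to $t_1$ at $x=x_1$, and bounding the integrand pointwise using $w(t,y)\ge w_{\min}$, $J\ge 0$, $\int_{-h_0}^{h_0} J\le 1$, together with $w_{\min}<0$ and $p(t,x_1)\le p_1$, I would reach
\[
w_{\min}-w(0,x_1) \;\ge\; (d+p_1)\,t_1\,w_{\min},
\]
that is, $w_{\min}\bigl[1-(d+p_1)t_1\bigr]\ge w(0,x_1)\ge 0$. The choice of $T_1$ makes the bracket strictly positive, forcing $w_{\min}\ge 0$, a contradiction. Thus $u\ge 0$ on $[0,T_1]\times[-h_0,h_0]$, and iterating over successive intervals of length $T_1$ covers $[0,T]$.

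\noindent For strict positivity, assume $u(0,\cdot)\not\equiv 0$. By continuity there is an open subinterval $I_0 \subset [-h_0,h_0]$ on which $u(0,\cdot)>0$. For any fixed $x\in I_0$, discarding the non-negative integral term and using the just-established bound $u\ge 0$ gives $u_t(t,x) \ge -(d+\|c\|_\infty)\,u(t,x)$, so a Gronwall estimate yields $u(t,x) \ge u(0,x)e^{-(d+\|c\|_\infty)t} > 0$ on $(0,T]$. To propagate this to the rest of $[-h_0,h_0]$ I would invoke assumption~\textbf{(J)}: since $J(0)>0$ and $J$ is continuous, $J\ge \eta>0$ on some interval $[-\rho,\rho]$. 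Hence for any $y$ within distance $\rho$ of the current positivity set, the nonlocal term $d\int J(y-z)u(t,z)\,dz$ is a strictly positive source, and a variation-of-constants argument applied to the linear lower bound $u_t(t,y)\ge d\!\int J(y-z)u(t,z)\,dz-(d+\|c\|_\infty)u(t,y)$ forces $u(t,y)>0$ for all $t\in(0,T]$. Since $[-h_0,h_0]$ is compact and connected, finitely many such steps yield $u>0$ throughout $(0,T]\times[-h_0,h_0]$.

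\noindent The main obstacle is precisely this propagation step. Unlike the local parabolic case, the nonlocal equation offers no instantaneous smoothing, so one must spread positivity geometrically along chains of $\rho$-neighborhoods determined by the support of $J$ near the origin; the hypothesis $J(0)>0$ in assumption~\textbf{(J)} is essential here, and the non-negativity argument must be completed before this stage since the propagation relies on a pointwise non-negative integrand.
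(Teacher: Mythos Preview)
Your proposal is correct. Note, however, that the paper does not actually supply a proof for this lemma: it is stated with a citation to \cite[Lemma 3.3]{CDLL-2018} and no argument is given. So there is nothing in the paper itself to compare against beyond the analogous technique you already identified.

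Your non-negativity argument via the exponential substitution $w=ue^{Kt}$ and time-slicing is exactly the scheme the paper deploys elsewhere (the $u^1$ computation you mention in fact appears in the proof of Theorem~\ref{Comparison-Principle-1}, not Theorem~\ref{u-v-g-h-exist}; you may want to correct that cross-reference). The strict-positivity step---Gronwall on the initial positivity set followed by $\rho$-propagation using $J(0)>0$---is the standard route for nonlocal operators and is precisely what the cited reference does. One small point worth making explicit in the propagation step: after the first iteration the enlarged positivity set satisfies $u(t,\cdot)>0$ only for $t\in(0,T]$, not at $t=0$; your variation-of-constants lower bound still works because the forcing term $d\!\int J(y-z)u(t,z)\,dz$ is continuous in $t$, non-negative, and strictly positive for $t>0$, so its integral over $[0,t_0]$ is strictly positive for every $t_0>0$. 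With that made explicit, the finite-chain argument over the compact interval $[-h_0,h_0]$ is complete.
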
}

\section{Dynamics of the two species}
\noindent

In this section, we will devote to the long-term dynamics
of the two species in (\ref{101}) in the case that the
population dynamics of the two species are not identical.
\subsection{Spectrum and principal eigenvalue}
\noindent

In this subsection we collect some essential results
that regarding the following linear dispersal
equation
\begin{equation}
u_t(t,x)=d\left[\int_{\mathbb{R}}J(x-y)u(t,y)dy-u(t,x)+au(t,x)\right]
~\text{for}~t>0~\text{and}~x\in\Omega,
\label{311}
\end{equation}
where $\Omega\in\mathbb{R}$ is an open set, parameter $a>0$ is a constant.
Let $X=C(\overline\Omega,\mathbb{R})$ be equipped with
the maximum norm, $X^+=\left\{u\in X~\Big|~u(x)\ge0,x
\in\overline\Omega\right\}$ and $X^{++}=
\left\{u\in X^+\Big|~u(x)>0,x\in\overline\Omega\right\}$.
For any given $u_1,u_2\in X$, define
\begin{align*}
&u_1\le u_2~\text{ or }~u_2\ge u_1~\text{ if }~u_2-u_1\in X^+,\\
&u_1\ll u_2~\text{ or }~u_2\gg u_1~\text{ if }~u_2-u_1\in X^{++}.
\end{align*}

Following are some well
known results.

\begin{theorem}
\rm{(Coville et al.\cite{CovilleSIAM2008})} Assume that
{\rm\bf(J)} holds. Let $\Omega\subset\mathbb{R}$ is a bounded
open interval. Then there exists a smallest $\lambda_1=
\lambda_1(d,a,\Omega)$ such that problem
\begin{equation}
\left\{
\begin{aligned}
&d\left(\int_{\mathbb{R}}J(x-y)\phi(y)dy-\phi(x)\right)+a\phi(x)=-\lambda_1\phi(x)~\text{ in }\Omega,\\
&\phi=0~\text{ for all }x\not\in\Omega~\text{ and }
~\phi|_{\overline\Omega}~\text{ is continuous}
\end{aligned}
\right.
\label{3113}
\end{equation}
has a nontrivial solution. This eigenvalue is simple and the
eigenfunctions are of constant sign in $\Omega$. Moreover,
$$
\lambda_1(d,a,\Omega)=\min_{\phi\in C(\overline\Omega),\phi\neq0}-
\frac{d\int_{\mathbb{R}}\int_{\mathbb{R}}J(x-y)\tilde\phi(y)\tilde\phi(x)
dydx+a\int_{\mathbb{R}}\tilde\phi^2(x)dx}{\int_{\Omega}\phi^2(x)dx},
$$
where $\tilde\phi$ denotes the extension by 0 of $\phi$
to $\mathbb{R}$ and the minimum is attained.
\label{Thm311}
\end{theorem}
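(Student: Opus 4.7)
The plan is to reduce the problem to a spectral problem for a compact self-adjoint positive operator and invoke a Krein--Rutman / Jentzsch type result. Because the admissible test functions vanish outside $\Omega$, the eigenvalue equation is equivalent to
\begin{equation*}
d\bigl(K_\Omega\phi - \phi\bigr) + a\phi = -\lambda_1\phi \quad \text{in } \Omega,
\qquad K_\Omega\phi(x):=\int_\Omega J(x-y)\phi(y)\,dy,
\end{equation*}
so that $\phi$ solves $K_\Omega\phi = \nu\phi$ with $\nu = (d-a-\lambda_1)/d$. Finding the smallest $\lambda_1$ is therefore equivalent to finding the largest $\nu$ in the spectrum of $K_\Omega$.

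First I would verify that $K_\Omega$ is a well-defined bounded operator both on $L^2(\Omega)$ and on $X=C(\overline\Omega)$, and that it is compact: on $L^2$ this follows because $J$ is continuous on the compact set $\overline\Omega-\overline\Omega$, hence the kernel is Hilbert--Schmidt; on $C(\overline\Omega)$ compactness follows via Arzel\`a--Ascoli using uniform continuity of $J$. Symmetry of $J$ gives self-adjointness on $L^2(\Omega)$. The spectral theorem for compact self-adjoint operators then provides a discrete sequence of real eigenvalues accumulating only at zero, with associated $L^2$-orthogonal eigenfunctions, and the standard Rayleigh quotient identity
\begin{equation*}
\nu_1 \;=\; \max_{\phi\in L^2(\Omega),\,\phi\neq 0}\;
\frac{\int_\Omega\!\int_\Omega J(x-y)\phi(y)\phi(x)\,dy\,dx}{\int_\Omega \phi^2\,dx}.
\end{equation*}
Translating via $\lambda_1 = d - a - d\nu_1$ yields the displayed variational formula; a maximizer $\phi$ exists by the direct method (the unit ball of $L^2(\Omega)$ is weakly compact and the quadratic form in the numerator is weakly continuous by compactness of $K_\Omega$), and elliptic-type bootstrap via the integral equation shows $\phi\in C(\overline\Omega)$.

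To establish that $\nu_1$ is simple and that the eigenfunction has constant sign, I would appeal to the Perron--Frobenius / Jentzsch principle for positive irreducible kernels. Positivity is immediate from $J\ge 0$. For irreducibility one uses the hypothesis $J(0)>0$ together with continuity of $J$: there is $\delta>0$ with $J(x-y)>0$ whenever $|x-y|<\delta$, and since $\Omega$ is a bounded interval, some iterate $K_\Omega^{\,n}$ has a strictly positive kernel on $\overline\Omega\times\overline\Omega$. Consequently any nonnegative nontrivial eigenfunction of $K_\Omega$ associated with $\nu_1$ must be strictly positive on $\overline\Omega$, and any eigenfunction associated with $\nu_1$ is a real multiple of this one. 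A standard rearrangement trick ($|\phi|$ achieves the Rayleigh quotient whenever $\phi$ does) shows that every maximizer has constant sign, so the eigenspace is one-dimensional.

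I expect the main obstacle to be the \emph{simplicity} and \emph{sign} conclusions rather than mere existence; these depend crucially on the irreducibility argument and on the strict positivity provided by the hypothesis $J(0)>0$. Verifying that some iterate of $K_\Omega$ has a pointwise positive kernel (covering $\overline\Omega\times\overline\Omega$ by chains of $\delta$-balls along $\Omega$) is the technical heart of the proof, and it is exactly where the continuity and positivity assumptions in \textbf{(J)} are used. The separation of $\nu_1$ from the rest of the spectrum then follows from Jentzsch's theorem, completing the argument.
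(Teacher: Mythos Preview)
The paper does not supply its own proof of this theorem; it is quoted verbatim as a result of Coville, D\'avila and Mart\'inez and used as a black box. There is therefore nothing in the paper to compare your argument against.

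That said, your outline is the standard route to this kind of principal-eigenvalue result and is essentially what the cited reference does: recast the problem as $K_\Omega\phi=\nu\phi$ for the compact, self-adjoint, nonnegative integral operator $K_\Omega$ on $L^2(\Omega)$; obtain $\nu_1$ and the Rayleigh characterization from the spectral theorem; then deduce simplicity and strict positivity of the principal eigenfunction from a Jentzsch / Krein--Rutman argument, using $J(0)>0$ and continuity of $J$ to get irreducibility (some iterate of $K_\Omega$ has strictly positive kernel on $\overline\Omega\times\overline\Omega$). Your bootstrap from $L^2$ to $C(\overline\Omega)$ is justified because $\nu_1>0$ (test with any nonnegative bump near a point to see the numerator is positive), so $\phi=\nu_1^{-1}K_\Omega\phi$ is continuous. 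The one place to be careful is in passing from the $L^2$ Rayleigh quotient to the variational formula over $C(\overline\Omega)$ as stated: this uses density of $C(\overline\Omega)$ in $L^2(\Omega)$ together with continuity of the quadratic form, and then the fact that the $L^2$ maximizer is in $C(\overline\Omega)$ by the bootstrap. Your proposal handles all of this correctly.
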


The asymptotic
behavior both for small and lager domains read as
\begin{theorem}
\rm{(\cite[Proposition 3.4]{CDLL-2018})} Assume that {\rm\bf(J)} holds and $\Omega=[l_1,l_2]$ with $-\infty<l_1<l_2<\infty$. Then the principle eigenvalue $\lambda_1(d,a,\Omega)$ of (\ref{3113}) is strictly deceasing and continuous in $l=l_2-l_1$, and\\
(1)~$\lim_{l_2-l_1\rightarrow0}\lambda_1(d,a,\Omega)=d-a$;~~~(2)~$\lim_{l_2-l_1\rightarrow\infty}\lambda_1(d,a,\Omega)=-a$;
\label{Thm315}
\end{theorem}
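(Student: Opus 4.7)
The plan is to reduce the entire statement to the behaviour of a single variational quantity. Multiplying the eigenvalue equation of Theorem \ref{Thm311} by the positive principal eigenfunction and integrating gives the identity
\begin{equation*}
\lambda_1(d,a,\Omega)=d-a-d\,\mathcal{S}(\Omega),\qquad \mathcal{S}(\Omega):=\sup_{\phi\in L^2(\Omega)\setminus\{0\}}\frac{\int_{\mathbb R}\int_{\mathbb R}J(x-y)\tilde\phi(y)\tilde\phi(x)\,dy\,dx}{\int_\Omega\phi^2(x)\,dx},
\end{equation*}
where $\tilde\phi$ is the extension of $\phi$ by zero off $\Omega$. By the Cauchy--Schwarz inequality and the fact that $\int_{\mathbb R}J=1$, one has $0\le\mathcal{S}(\Omega)\le 1$, and translation-invariance of $J$ makes $\mathcal{S}$ depend only on $l=l_2-l_1$. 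The three assertions thus reduce to showing that $\mathcal{S}$ is strictly increasing and continuous in $l$, with $\mathcal{S}(l)\to 0$ as $l\to 0$ and $\mathcal{S}(l)\to 1$ as $l\to\infty$.

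The monotonicity step is the easiest: for $l<l'$, any admissible test $\phi$ on $\Omega_l$, extended by zero, is an admissible test on $\Omega_{l'}$ without changing either integral in $\mathcal{S}$, so $\mathcal{S}$ is nondecreasing in $l$ and $\lambda_1$ is nonincreasing. Strict monotonicity follows from Theorem \ref{Thm311}: if the suprema coincided, the extremiser on $\Omega_{l'}$ would have to vanish on $\Omega_{l'}\setminus\Omega_l$, contradicting the strict positivity of the principal eigenfunction on the larger interval. For continuity in $l$, I would run a sandwich argument: up to translation $\Omega_{l-\varepsilon}\subset\Omega_l\subset\Omega_{l+\varepsilon}$, so by monotonicity it suffices to prove $\lambda_1(\Omega_{l\pm\varepsilon})\to\lambda_1(\Omega_l)$ as $\varepsilon\to 0$. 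Plugging the normalised principal eigenfunction of one interval into the Rayleigh quotient on the other produces matching upper and lower bounds modulo errors of order $\varepsilon$, controlled via the uniform $L^\infty$ normalisation of the eigenfunction together with dominated convergence using $J\in L^1(\mathbb R)$.

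For assertion (1), Cauchy--Schwarz and boundedness of $J$ give
\begin{equation*}
\int_{\mathbb R}\int_{\mathbb R}J(x-y)\tilde\phi(y)\tilde\phi(x)\,dy\,dx\le\|J\|_\infty\Bigl(\int_\Omega|\phi|\Bigr)^2\le\|J\|_\infty\,l\int_\Omega\phi^2,
\end{equation*}
so $\mathcal{S}(\Omega)\le\|J\|_\infty\,l\to 0$ as $l\to 0$, yielding $\lambda_1\to d-a$. For assertion (2), I test with $\phi\equiv 1$ on $\Omega=[l_1,l_2]$ and obtain
\begin{equation*}
\mathcal{S}(\Omega)\ge\frac{1}{l}\int_{l_1}^{l_2}\Bigl(\int_{l_1}^{l_2}J(x-y)\,dy\Bigr)dx\longrightarrow 1\quad\text{as }l\to\infty,
\end{equation*}
since the inner integral tends to $\int_{\mathbb R}J=1$ uniformly for $x$ at distance at least $R$ from $\partial\Omega$, and the proportion of such $x$ in $\Omega$ tends to $1$. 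Combined with $\mathcal{S}\le 1$ this gives $\lambda_1\to -a$.

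The main obstacle I anticipate is the continuity of $\lambda_1$ in $l$: the nonlocal dispersal operator is bounded but not compact, so the usual spectral-perturbation toolkit does not apply off the shelf, and one must exploit the explicit Rayleigh quotient together with the uniform control and strict positivity of the principal eigenfunction supplied by Theorem \ref{Thm311}. Monotonicity and the two asymptotic limits themselves reduce to elementary test-function estimates.
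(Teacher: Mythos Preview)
The paper does not give its own proof of this theorem; it is quoted verbatim from \cite[Proposition~3.4]{CDLL-2018} and left unproved here. So there is no in-paper argument to compare against, only the cited reference.

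Your approach is correct and is essentially the standard variational route one would expect in \cite{CDLL-2018}: rewrite the Rayleigh quotient of Theorem~\ref{Thm311} as $\lambda_1=d-a-d\,\mathcal S(\Omega)$ and reduce everything to the behaviour of $\mathcal S$. The monotonicity by zero-extension of test functions, the limit (1) via $\mathcal S(\Omega)\le\|J\|_\infty\,l$, and the limit (2) via $\phi\equiv 1$ are all clean and correct. Your observation that only $J\in L^1(\mathbb R)$ is needed for (2) (choose $R$ with $\int_{|z|>R}J<\varepsilon$, then the inner integral exceeds $1-\varepsilon$ on the bulk of $\Omega$) is the right way to avoid assuming compact support.

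You correctly flag continuity in $l$ as the only genuinely delicate step. Your sandwich-plus-Rayleigh-quotient idea is the right one, but as written it is a sketch: to make it rigorous you need a uniform-in-$l$ bound on the $L^\infty$ norm of the normalised principal eigenfunction (or at least an $L^2$ estimate controlling the boundary strip), so that inserting $\phi_{l}$ into the quotient on $\Omega_{l\pm\varepsilon}$ produces an $o(1)$ error. This is exactly where the positivity and simplicity from Theorem~\ref{Thm311} get used, and it is worth spelling out. Otherwise the proposal is complete.
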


Theorem \ref{Thm315} immediately deduce the following result.
\begin{theorem}
For the principle eigenvalue $\lambda_1(d,a,\Omega)$ of
(\ref{3113}) with $\Omega=[l_1,l_2]$, if $0<a<d$, then there exists a unique positive $R^*$ such that $\lambda_1(d,a,\Omega)=0$ if $l_2-l_1=R^*$,
$\lambda_1(d,a,\Omega)>0$ if $l_2-l_1<R^*$ and $\lambda_1(d,a,\Omega)<0$ if $l_2-l_1>R^*$.
\label{Thm316}
\end{theorem}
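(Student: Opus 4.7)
The plan is to deduce this statement directly from Theorem \ref{Thm315}, which already encapsulates all the analytic content we need: strict monotonicity, continuity, and the two limiting values of $\lambda_1(d,a,\Omega)$ as a function of $l=l_2-l_1$.

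First, I would view $\lambda_1(d,a,\Omega)$ as a function $F(l):=\lambda_1(d,a,[l_1,l_1+l])$ of the single variable $l>0$. By Theorem \ref{Thm315}, $F$ is continuous and strictly decreasing on $(0,\infty)$, with
\[
\lim_{l\to 0^+}F(l)=d-a,\qquad \lim_{l\to\infty}F(l)=-a.
\]
Under the hypothesis $0<a<d$ we have $d-a>0$ and $-a<0$, so $F$ takes values of both signs. By continuity and the intermediate value theorem, there exists some $R^*>0$ with $F(R^*)=0$; by strict monotonicity this $R^*$ is unique.

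Next, for the sign dichotomy, strict monotonicity of $F$ gives immediately that $F(l)>F(R^*)=0$ whenever $l<R^*$ and $F(l)<F(R^*)=0$ whenever $l>R^*$. Translating back to the original notation, this is exactly the three cases claimed for $l_2-l_1$ versus $R^*$.

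I do not expect any real obstacle here, since the work has already been done in Theorem \ref{Thm315}. The only minor point to be careful about is to invoke the hypothesis $0<a<d$ precisely at the step where one needs $d-a>0$, so that the eigenvalue really crosses zero rather than being, say, always negative (as would happen if $a\ge d$); the uniqueness of $R^*$ is then a clean consequence of strict monotonicity and does not require a separate argument.
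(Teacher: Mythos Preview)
Your proposal is correct and matches the paper's approach: the paper does not give a separate proof but simply states that Theorem~\ref{Thm315} immediately deduces this result, which is exactly the intermediate-value-plus-strict-monotonicity argument you spelled out.
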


The following results concerns with the asymptotic behavior
of the solution of the evolution problem
\begin{equation}
u_t(t,x)=d(J*u-u)+f(t,u)~\text{ in }~\mathbb{R}^+\times\Omega
~\text{ and }~u(0,x)=u_0(x)~\text{ in }~\Omega,
\label{314}
\end{equation}
where the reaction term $f(x,u)$ satisfying the following assumption:
\begin{description}
\item[(A3)]$f(x,u)\in C(\mathbb{R}\times[0,+\infty))$ is differential with respect to $u$ and $f_u(x,0)$ is Lipshitz continuous in $\mathbb{R}$; $f(x,0)=0$ and $\frac{f(u)}{u}$ is strictly decreasing in $u\in\mathbb{R}^+$; there exists a constant $\tilde K>0$ such that $f(x,u)<0$ for all $x\in\mathbb{R}$ and $u\ge\tilde K$.
\end{description}

\begin{theorem}
\rm{(\cite{BatesJMAA2007,CovilleJDE2010})}  Assume that {\rm\bf(J)} and
{\rm\bf(A3)} hold, and $\Omega\subset\mathbb{R}$ is bounded. Let $u_0$
be an arbitrary bounded and continuous function in $\Omega$
such that $u_0\ge,\not\equiv0$. Let $u(t,x)$ be the solution
of (\ref{314}) with initial datum $u(0,x)=u_0(x)$. Then problem (\ref{314}) admits a unique positive steady state $u_\Omega$ if and only if $\lambda_1(d,f_u(x,0),\Omega)<0$. Further,
\begin{description}
\item[(i)]$u(t,x)\rightarrow u_\Omega$
uniformly in $x\in\Omega$ as $t\rightarrow+\infty$ if $\lambda_1(d,a,\Omega)<0$;
\item[(ii)]$u(t,x)\rightarrow0$ uniformly in $x\in\Omega$ as $t\rightarrow+\infty$ if $\lambda_1(d,a,\Omega)\ge0$.
\end{description}
\label{Thm313}
\end{theorem}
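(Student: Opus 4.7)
The plan is to derive both the existence/uniqueness of $u_\Omega$ and the asymptotic dichotomy via a sub/supersolution scheme anchored at the positive eigenfunction of Theorem \ref{Thm311}, combined with the strict monotonicity of $f(x,u)/u$ supplied by \textbf{(A3)}. Throughout, $u$ is extended by zero outside $\Omega$ so that the linear operator $d(J*u-u)$ matches the Dirichlet problem (\ref{3113}); write $\lambda_1:=\lambda_1(d,f_u(\cdot,0),\Omega)$ with associated positive eigenfunction $\phi_1\in X^{++}$.

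\emph{Existence and uniqueness of $u_\Omega$ when $\lambda_1<0$.} Since $f(x,0)=0$ and $f$ is $C^1$ in $u$, a first-order expansion gives
\begin{equation*}
d\!\int_\mathbb{R}J(x-y)\epsilon\phi_1(y)\,dy-d\epsilon\phi_1(x)+f(x,\epsilon\phi_1(x))=-\lambda_1\epsilon\phi_1(x)+o(\epsilon)>0
\end{equation*}
for all sufficiently small $\epsilon>0$, so $\epsilon\phi_1$ is a stationary subsolution. Any constant $K\geq\tilde K$ with $K\geq\epsilon\|\phi_1\|_\infty$ is a stationary supersolution since $J*K\leq K$ pointwise on $\Omega$ (with $K$ extended by zero outside) and $f(x,K)<0$. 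The evolution starting from $\epsilon\phi_1$ is nondecreasing in $t$, dominated by $K$, and converges uniformly to a stationary solution $u_\Omega\in[\epsilon\phi_1,K]$. Uniqueness follows by a standard sliding argument in $\beta$ applied to two positive steady states $u_1,u_2$: the critical ratio $\beta^*:=\sup\{\beta>0:u_1\geq\beta u_2\}$ cannot satisfy $\beta^*<1$, for then the strict decrease of $f(x,\cdot)/\cdot$ combined with the stationary equations at the contact point yields a contradiction.

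\emph{Convergence in case (i).} For arbitrary $u_0\geq,\not\equiv 0$, the nonlocal strong maximum principle (cf.\ Lemma \ref{lemma-MP2}) yields $u(t_0,x)>0$ on $\overline\Omega$ for any $t_0>0$. Choose $\epsilon>0$ small and $K$ large so that $\epsilon\phi_1\leq u(t_0,\cdot)\leq K$; by the comparison principle, $u(t+t_0,\cdot)$ is sandwiched between the monotone orbits starting at $\epsilon\phi_1$ (increasing to $u_\Omega$) and at $K$ (decreasing to $u_\Omega$, by the analogous construction with $K$ a supersolution), from which the uniform convergence $u(t,x)\to u_\Omega$ follows.

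\emph{Case (ii), $\lambda_1\geq 0$.} First, no positive steady state $u^*$ can exist: if one did, $u^*$ would be a positive eigenfunction at eigenvalue $0$ of the linear operator with potential $f(x,u^*)/u^*$, so $\lambda_1(d,f(\cdot,u^*)/u^*,\Omega)=0$; the strict pointwise inequality $f(x,u^*)/u^*<f_u(x,0)$ combined with monotonicity of the principal eigenvalue in the potential then forces $\lambda_1<0$, contradicting $\lambda_1\geq 0$. With any constant $K\geq\max\{\tilde K,\|u_0\|_\infty\}$ serving as a stationary supersolution, $u(t,\cdot)\leq K$, so the orbit is relatively compact and every $\omega$-limit point is a non-negative steady state by the order-preserving semiflow structure, hence must equal $0$. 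The main obstacle I anticipate is the borderline subcase $\lambda_1=0$, where direct linear decay estimates degenerate and the exclusion of non-trivial $\omega$-limit points rests entirely on the strict monotonicity of $f(x,u)/u$, which upgrades the marginal linear stability of $0$ to genuine nonlinear asymptotic stability.
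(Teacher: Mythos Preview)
The paper does not prove this theorem; it is simply quoted from Bates--Zhao \cite{BatesJMAA2007} and Coville \cite{CovilleJDE2010}, so there is no in-paper proof to compare against. Your sub/supersolution scheme anchored at $\epsilon\phi_1$ and a large constant $K$, together with the sliding argument for uniqueness based on the strict decrease of $f(x,u)/u$, is exactly the approach taken in those references, so your outline is on the right track.

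One point deserves tightening. In case (ii) you write that ``the orbit is relatively compact and every $\omega$-limit point is a non-negative steady state by the order-preserving semiflow structure''; order preservation alone does not deliver this, and for nonlocal operators there is no parabolic smoothing to give compactness in $C(\overline\Omega)$ for free. The cleaner argument (and the one actually used in the cited works) is to compare $u$ directly with the orbit $u_K$ starting from the constant supersolution $K\geq\max\{\tilde K,\|u_0\|_\infty\}$: since $K$ is a supersolution, $u_K(t,\cdot)$ is monotone non-increasing in $t$, hence converges pointwise to some $w\geq 0$; one then checks that $w$ is a continuous steady state (this step is genuinely nontrivial in the nonlocal setting and is where \cite{BatesJMAA2007,CovilleJDE2010} do some work), and the nonexistence of positive steady states forces $w\equiv 0$. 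Comparison then gives $0\leq u(t,\cdot)\leq u_K(t,\cdot)\to 0$. Your instinct that the borderline case $\lambda_1=0$ is the delicate one is correct, and it is handled in the literature precisely through the strict inequality $f(x,u^*)/u^*<f_u(x,0)$ that you invoke.
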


\subsection{Vanishing Case $(h_\infty-g_\infty<\infty)$}
\noindent

It follows from Theorem \ref{u-v-g-h-exist} that $h(t)$ and $-g(t)$ are monotone increasing. Then there exist $h_\infty$ and $g_\infty$ such that
$h_\infty=\lim_{t\rightarrow\infty}h(t)$ and
$g_\infty=\lim_{t\rightarrow\infty}g(t)$. To establish the long time behavior of
$(u,v)$, we first derive an estimate.
\begin{theorem}
Let $(u,v,g,h)$ be the unique solution of (\ref{101}). If $h_\infty-g_\infty<\infty$,
then $\lim_{t\rightarrow\infty}g'(t)=\lim_{t\rightarrow\infty}h'(t)=0$.
\label{h-g-bounded}
\end{theorem}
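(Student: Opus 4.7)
The plan is to show that both $h'$ and $-g'$ lie in $L^1(0,\infty)$ and are uniformly continuous (in fact, Lipschitz) on $[0,\infty)$, after which Barbalat's lemma gives $h'(t),g'(t)\to 0$ as $t\to\infty$. The $L^1$ integrability is immediate from the monotonicity of $h$ and $-g$: since $h$ is nondecreasing with $h(t)\to h_\infty<\infty$, we have $\int_0^\infty h'(t)\,dt=h_\infty-h_0<\infty$, and analogously $\int_0^\infty (-g'(t))\,dt=h_0-g_\infty<\infty$.

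For the uniform continuity, I would first deduce a uniform Lipschitz bound for $u$ in $t$. By Theorem~\ref{u-v-g-h-exist} one has $0\le u\le M$, $0<v\le M_1$, and $\int_{g(t)}^{h(t)}J(x-y)u(t,y)\,dy\le M$. Plugging these into the equation
\[
u_t=d_1\Bigl[\int_{g(t)}^{h(t)}J(x-y)u(t,y)\,dy-u\Bigr]+u(a_1-b_1u-c_1v)
\]
yields $|u_t|\le L_1:=2d_1M+(a_1+b_1M+c_1M_1)M$ throughout $\Omega_{g,h}$. Since $u$ vanishes on the parabolic boundary, extending $u\equiv 0$ outside $\Omega_{g,h}$ produces a function that is globally Lipschitz in $t$ with constant $L_1$ (for $t<t_x$, one uses $u(t_x,x)=0$). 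Moreover, $h'$ and $-g'$ are bounded by $\mu M(h_\infty-g_\infty)$, so the moving endpoints change at bounded speed.

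Writing $h'(t)=\mu\int_{g(t)}^{h(t)}u(t,x)K_h(t,x)\,dx$ with $K_h(t,x):=\int_{h(t)}^{+\infty}J(x-y)\,dy\in[0,1]$, I would estimate $|h'(t_2)-h'(t_1)|$ for $t_1<t_2$ by splitting each integral over the common interval $[g(t_2),h(t_1)]$ and the thin boundary strips $[g(t_1),g(t_2)]$ and $[h(t_1),h(t_2)]$. On the common interval, the integrand difference is bounded pointwise by $L_1(t_2-t_1)+M\|J\|_\infty(h(t_2)-h(t_1))$, each term of which is $O(t_2-t_1)$. On the boundary strips, whose total Lebesgue measure is $O(t_2-t_1)$ by the speed bound, the integrand remains bounded by $M$. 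Summing, $|h'(t_2)-h'(t_1)|\le C(t_2-t_1)$ for some constant $C$ independent of $t_1,t_2$, so $h'$ is uniformly Lipschitz on $[0,\infty)$. An identical decomposition handles $g'$.

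Barbalat's lemma (a uniformly continuous $L^1$ function must tend to zero) now yields $h'(t),g'(t)\to 0$. The main technical point I expect is the boundary-strip estimate: one has to exploit the vanishing of $u$ on the free boundaries together with its Lipschitz-in-$t$ continuity to control the integrand as the interval of integration slides, because the convolution kernel $K_h(t,x)$ itself does not vanish at $x=h(t)$ or $x=g(t)$ in general.
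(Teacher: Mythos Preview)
Your proposal is correct and follows essentially the same route as the paper: bound $|u_t|$ uniformly, use this together with the bounded interval length and bounded boundary speeds to show $h'$ (and $g'$) is Lipschitz on $[0,\infty)$ via a splitting of the double integral, then invoke Barbalat's lemma (the paper phrases the last step tersely as ``use the condition $h_\infty-g_\infty<\infty$ again''). Your worry in the final paragraph is unnecessary: on the boundary strips the integrand $u\,K_h$ is bounded by $M$ and the strips have total length $O(t_2-t_1)$ by the speed bound, so the contribution is $O(t_2-t_1)$ without any appeal to the vanishing of $u$ at the free boundary.
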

\begin{proof}
Choose constant $K^*$ with $K^*\ge\max\{\|u\|_{C([g(t),h(t)])},\|u_t\|_{C([g(t),h(t)])}\}$, then for any $\tau,s\ge0$ and $\theta$ between $\tau$ and $s$, we have
\begin{align*}
h'(\tau)-h'(s)&=\mu\int_{g(\tau)}^{h(\tau)}\int_{h(\tau)}^{+\infty}J(x-y)u(\tau,x)
dydx-\mu\int_{g(s)}^{h(s)}\int_{h(s)}^{+\infty}J(x-y)u(s,x)dydx\\
&=\mu\left(\int_{g(\tau)}^{g(s)}+\int_{g(s)}^{h(s)}+\int_{h(s)}^{h(\tau)}\right)\int_{h(\tau)}^{+\infty}J(x-y)u(\tau,x)dydx\\
&\quad\quad\quad\quad\quad\quad\quad\quad\quad\quad
-\mu\int_{g(s)}^{h(s)}\left(\int_{h(s)}^{h(\tau)}+\int_{h(\tau)}^{+\infty}\right)J(x-y)u(s,x)dydx\\
&=\mu\int_{g(s)}^{h(s)}\int_{h(\tau)}^{+\infty}J(x-y)u_t(\theta,x)(\tau-s)dydx
-\mu\int_{g(s)}^{h(s)}\int_{h(s)}^{h(\tau)}J(x-y)u(s,x)dydx\\
&\quad\quad\quad+\mu\int_{g(\tau)}^{g(s)}\int_{h(\tau)}^{+\infty}J(x-y)u(\tau,x)dydx+\mu\int_{h(s)}^{h(\tau)}\int_{h(\tau)}^{+\infty}J(x-y)u(\tau,x)dydx.
\end{align*}
And then for $\xi_1,\xi_2$ between $s$ and $\tau$,
\begin{align*}
|h'(\tau)-h'(s)|&\le\mu K^*|\tau-s|(h(s)-g(s))+\mu K^*|g(\tau)-g(s)|+2\mu K^*|h(\tau)-h(s)|\\
&\le\mu K^*\left[\|g'(\xi_1)\|_\infty+\|h'(\xi_2)\|_\infty+(h_\infty-g_\infty)\right]
|\tau-s|
\end{align*}
along with the condition that $h_\infty-g_\infty<\infty$ indicates that
$h'(t)$ is Lipschitz continuous in $[0,\infty)$. We use the condition $h_\infty-g_\infty<\infty$ again to obtain that $\lim_{t\rightarrow+\infty}h'(t)=0$. Analogously, there is $\lim_{t\rightarrow+\infty}g'(t)=0$.
\end{proof}

\begin{theorem}
Let $(u,v,g,h)$ be the solution of problem (\ref{101}) with $h_\infty-g_\infty
<\infty$, then $\lim_{t\rightarrow+\infty}\|u\|_{C([g(t),h(t)])}=0$.
\label{uv-vanishing}
\end{theorem}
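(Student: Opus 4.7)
The plan is to argue by contradiction. Suppose $\limsup_{t\to\infty}\|u(t,\cdot)\|_{C([g(t),h(t)])}=2\delta>0$ and extract sequences $t_n\to\infty$ and $x_n\in[g(t_n),h(t_n)]$ with $u(t_n,x_n)\ge\delta$. Since $h_\infty-g_\infty<\infty$, along a subsequence $x_n\to x_0\in[g_\infty,h_\infty]$. I introduce the translates $u_n(t,x):=u(t+t_n,x)$, $v_n(t,x):=v(t+t_n,x)$, $h_n(t):=h(t+t_n)$, $g_n(t):=g(t+t_n)$; by the boundedness of $u,v$ and the form of \eqref{101}, $\partial_t u_n$ and $\partial_t v_n$ are uniformly bounded, so $u_n,v_n$ are uniformly Lipschitz in $t$, while $h_n(t)\to h_\infty$ and $g_n(t)\to g_\infty$ uniformly on compact time intervals.

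Rewriting the equation in mild (Duhamel) form and exploiting the continuity of $J$---which forces the convolution term to be equicontinuous in $x$ with a modulus inherited from $J$---one extracts along a further subsequence, via Arzel\`a--Ascoli and a diagonal argument in the spirit of \cite{CDLL-2018}, limits $u_n\to u^*$ locally uniformly on $\mathbb{R}\times[g_\infty,h_\infty]$ and $v_n\to v^*$ locally uniformly on $\mathbb{R}\times\mathbb{R}$. These limits satisfy, on the fixed interval $(g_\infty,h_\infty)$,
\[
u^*_t=d_1\left[\int_{g_\infty}^{h_\infty}J(x-y)u^*(t,y)\,dy-u^*\right]+u^*(a_1-b_1u^*-c_1v^*),
\]
together with $u^*(t,g_\infty)=u^*(t,h_\infty)=0$, and the pointwise convergence yields $u^*(0,x_0)\ge\delta$. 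Lemma~\ref{Maximum-Principle} (applied to the fixed-interval limit equation) then gives $u^*(t,x)>0$ for every $t>0$ and $x\in(g_\infty,h_\infty)$.

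The contradiction now comes from the free-boundary identity for $h$. By Theorem~\ref{h-g-bounded}, $h'(t+t_n)\to 0$ as $n\to\infty$; on the other hand, passing to the limit in
\[
h_n'(t)=\mu\int_{g_n(t)}^{h_n(t)}\int_{h_n(t)}^{\infty}J(x-y)u_n(t,x)\,dy\,dx
\]
by dominated convergence and the convergences just established produces
\[
0=\mu\int_{g_\infty}^{h_\infty}u^*(t,x)\left(\int_{h_\infty-x}^{\infty}J(z)\,dz\right)dx\qquad\text{for every }t>0.
\]
Since $J(0)>0$ and $J$ is continuous, the inner integral is strictly positive for $x$ in a left-neighborhood of $h_\infty$, while $u^*(t,x)>0$ there by the previous step; hence the right-hand side is strictly positive, which is impossible. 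Therefore $\|u(t,\cdot)\|_{C([g(t),h(t)])}\to 0$.

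The main difficulty is the compactness step, since the nonlocal diffusion operator affords no spatial regularization and spatial equicontinuity of $u_n$ is not a priori available. I expect to handle this by working with the Duhamel representation, where the only potentially rough part is the convolution $\int J(x-y)u_n(s,y)\,dy$, which is itself equicontinuous in $x$ with modulus controlled purely by $J$; a Gronwall-type iteration then yields a spatial modulus of continuity uniform in $n$, which combined with the uniform Lipschitz control in $t$ suffices to apply Arzel\`a--Ascoli and to legitimately pass to the limit inside the free-boundary integral.
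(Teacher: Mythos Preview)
Your approach is essentially the same as the paper's: both argue by contradiction, extract a sequence $(t_n,x_n)$ with $u(t_n,x_n)\ge\delta$, translate in time, pass to a limit solution on the fixed interval $(g_\infty,h_\infty)$, invoke the Maximum Principle to force strict positivity of the limit, and then contradict Theorem~\ref{h-g-bounded} by passing to the limit in the free-boundary integral. The paper in fact bounds $v$ above by $M_0$ and works with the resulting differential \emph{inequality} for the limit $\tilde U$ rather than tracking the limit of $v_n$, which is a slight simplification you could adopt; otherwise the skeleton is identical.

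The one place where you are more careful than the paper is the compactness step: the paper simply asserts that $U_k\to\tilde U$ along a subsequence by boundedness, without addressing spatial equicontinuity, whereas you propose to recover a uniform spatial modulus via the Duhamel representation and the equicontinuity of the convolution $\int J(\cdot-y)u_n(s,y)\,dy$. That is the right instinct, and your outline for it is sound. One small point to watch: from $x_n\in[g(t_n),h(t_n)]$ you only get $x_0\in[g_\infty,h_\infty]$ a priori, so either argue that $x_0$ cannot be an endpoint (since $u(t_n,h(t_n))=0$ and $u$ has a uniform spatial modulus near the boundary), or note that the contradiction with the free-boundary integral only requires $u^*\not\equiv 0$, which the Maximum Principle delivers from $u^*(0,x_0)\ge\delta$ regardless.
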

\begin{proof}
Since $h_\infty-g_\infty<\infty$, it follows from Theorem \ref{h-g-bounded} that $h'(t),-g'(t)\rightarrow0$ as $t\rightarrow+\infty$. Also, by $\|u,v\|_{\infty}\le M_0$, there holds
\begin{equation*}
\left\{
\begin{aligned}
&u_t\ge d_1\left[\int_{g(t)}^{h(t)}J(x-y)u(t,y)dy-u\right]+u(a_1-b_1u-c_1M_0),~t>0,~g(t)<x<h(t),\\
&v_t\ge d_2\left[\int_{\mathbb{R}}J(x-y)v(t,y)dy-v\right]+v(a_2-b_2u-c_2M_0),
~t>0,~x\in\mathbb{R}.
\end{aligned}
\right.
\end{equation*}

Assume on the contrary that $\lim_{t\rightarrow+\infty}\|u\|_{C([g(t),h(t)])}>0$, then there exist $\epsilon_1>0$
and sequence
$\left\{(t_k, x_k)\right\}_{k=1}^{\infty}$ with $x_k\in(g(t),h(t))$ and $t_k\rightarrow+\infty$ as $k\rightarrow\infty$ such that $u(t_k,x_k)\ge\frac{\epsilon_1}{2}$
for all $k\in\mathbb{N}$.

Since $g_\infty<g(t)<x_k<h(t)<h_\infty$, passing to a subsequence if necessary, we then have $x_k\rightarrow x_0\in(g_\infty,h_\infty)$
as $k\rightarrow\infty$. For $t\in(-t_k,+\infty)$ and $x\in(g(t+t_k),h(t+t_k))$, define
\begin{align*}
U_k(t,x)=u(t+t_k,x).
\end{align*}
By Theorem \ref{u-v-g-h-exist}, we see that $u(t,x)$ is bounded, it then follows that (passing to a subsequence if necessary)
$U_k(t,x)\rightarrow\tilde U(t,x)$
as $k\rightarrow\infty$, for $x\in(g_\infty,h_\infty)$, $\tilde U(t,x)$ satisfies
\begin{equation*}
\left\{
\begin{aligned}
&\tilde U_t\ge d_1\int_{g_\infty}^{h_\infty}J(x-y)\tilde U(t,y)dy-d_1\tilde U(t,x)+\tilde U(a_1-b_1u-c_1M_0),& &t\in\mathbb{R},\\
&\tilde U(0,x_0)=\lim_{k\rightarrow\infty}U_k(0,x_k)=\lim_{k
\rightarrow\infty}u(t_k,x_k)\ge\frac{\epsilon_1}{2}>0.
\end{aligned}
\right.
\end{equation*}
The Maximum Principle yields that $\tilde U(t,x)>0$ in
$\mathbb{R}\times(g_\infty,h_\infty)$.

Further, since $h'(t),-g'(t)\rightarrow0$ as $t\rightarrow\infty$, then there hold
\begin{align*}
0=\lim_{k\rightarrow\infty}h'(t+t_k)&=\mu\lim_{k\rightarrow\infty}
\int_{g(t+t_k)}^{h(t+t_k)}\int_{h(t+t_k)}^{+\infty}J(x-y)U_k(t,x)dydx\\
&=\mu\int_{g_\infty}^{h_\infty}\int_{h_\infty}^{+\infty}J(x-y)\tilde U(t,x)dydx>0
\end{align*}
and
\begin{align*}
0=\lim_{k\rightarrow\infty}g'(t+t_k)&=-\mu\lim_{k\rightarrow\infty}
\int_{g(t+t_k)}^{h(t+t_k)}\int_{-\infty}^{g(t+t_k)}J(x-y)U_k(t,x)dydx\\
&=-\mu\int_{g_\infty}^{h_\infty}\int_{-\infty}^{g_\infty}J(x-y)\tilde U(t,x)dydx<0,
\end{align*}
contradictions. Hence there holds $\lim_{t\rightarrow+\infty}\|u\|_{C([g(t),h(t)])}=0$. This completes the proof.
\end{proof}

\subsection{Spreading Case $(h_\infty-g_\infty=\infty)$ with $\frac{a_1}{a_2}<\min\left\{\frac{b_1}{b_2},~\frac{c_1}{c_2}\right\}$}
\noindent

The condition $\frac{a_1}{a_2}<\min\left\{\frac{b_1}{b_2},~\frac{c_1}{c_2}\right\}$ here means that when compared with the species $v$, the species $u$ is an inferior competitor. Further, we assume that
\begin{description}
\item[(F1)] $a_2<d_2$.
\end{description}

\begin{theorem}
Assume that $(u,v,g,h)$ is the unique positive solution of (\ref{101}) with
$\frac{a_1}{a_2}<\min\left\{\frac{b_1}{b_2},~\frac{c_1}{c_2}\right\}$. If
$h_\infty-g_\infty=\infty$, then
$\lim_{t\rightarrow+\infty}\left(u(t,x),v(t,x)\right)=\left(0,\frac{a_2}{c_2}\right)$ holds uniformly in any compact subset of $\mathbb{R}$.
\label{Thm325}
\end{theorem}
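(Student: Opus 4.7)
The plan is to prove $u \to 0$ and $v \to a_2/c_2$ locally uniformly by a Lotka--Volterra sub/super-solution iteration tailored to the nonlocal free-boundary setting. First I would obtain initial global upper bounds by scalar ODE comparison. The constant-in-$x$ function $\overline V(t)$ solving $\overline V' = \overline V(a_2 - c_2 \overline V)$ with $\overline V(0) = \max\{\|v_0\|_\infty, a_2/c_2\}$ is a supersolution of the $v$-equation on $\mathbb R$, since $d_2(J*\overline V - \overline V) = 0$ and $v(a_2 - b_2 u - c_2 v) \le v(a_2 - c_2 v)$; hence $v(t,x) \le \overline V(t) \to a_2/c_2$ uniformly in $x \in \mathbb R$. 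Similarly $\overline U(t)$ solving $\overline U' = \overline U(a_1 - b_1 \overline U)$ with $\overline U(0) \ge \|u_0\|_\infty$ is a supersolution of the $u$-equation on $[g(t), h(t)]$ (using $\int_{g(t)}^{h(t)} J(x-y)\,dy \le 1$ and the boundary condition $u(t, g(t)) = u(t, h(t)) = 0$), so Lemma~\ref{Maximum-Principle} gives $\limsup_{t \to \infty} \sup_{x \in [g(t), h(t)]} u(t,x) \le a_1/b_1$.

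Next I would iterate. Set $M_0 := a_1/b_1$ and define recursively $N_{n+1} := (a_2 - b_2 M_n)/c_2$ and $M_{n+1} := \max\{(a_1 - c_1 N_{n+1})/b_1,\, 0\}$. The inequality $a_1/a_2 < b_1/b_2$ makes $N_1 > 0$, while $a_1/a_2 < \min\{b_1/b_2, c_1/c_2\}$ rules out any positive fixed point of the recurrence, so $M_n \downarrow 0$ and $N_n \uparrow a_2/c_2$. By induction on $n$, I would establish that for every $\varepsilon > 0$: (a) $\limsup_{t \to \infty} u(t,x) \le M_n + \varepsilon$ uniformly on $[g(t), h(t)]$, and (b) $\liminf_{t \to \infty} v(t,x) \ge N_{n+1} - \varepsilon$ uniformly on every compact $K \subset \mathbb R$.

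The main step is (b). Once $u \le M_n + \varepsilon$ holds eventually, the $v$-equation yields $v_t \ge d_2(J*v - v) + v(\tilde a - c_2 v)$ on $\mathbb R$ with $\tilde a := a_2 - b_2(M_n + \varepsilon) > 0$. Hypothesis \textbf{(F1)} $a_2 < d_2$ combined with Theorem~\ref{Thm316} provides $L^*$ such that $\lambda_1(d_2, \tilde a, [-L, L]) < 0$ for $L > L^*$; let $\phi_L \ge 0$ be the corresponding principal eigenfunction, extended by $0$ outside $[-L, L]$. Using the eigenvalue identity one checks that $\delta \phi_L$ is a stationary subsolution of $w_t = d_2(J*w - w) + w(\tilde a - c_2 w)$ on $\mathbb R$ whenever $\delta \le -\lambda_1 / (c_2 \|\phi_L\|_\infty)$. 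Because $h_\infty - g_\infty = \infty$, $[-L, L] \subset (g(t), h(t))$ for $t \ge T_L$, and the strong maximum principle (Lemma~\ref{Maximum-Principle}) guarantees $v(T_L, \cdot) > 0$ on $[-L, L]$; shrinking $\delta$ we may assume $v(T_L, \cdot) \ge \delta \phi_L$ on $\mathbb R$. Scalar nonlocal comparison then gives $v(t, \cdot) \ge w_L(t, \cdot)$ for $t \ge T_L$, where $w_L$ solves the scalar nonlocal logistic equation with initial datum $\delta \phi_L$; Theorem~\ref{Thm313} yields $w_L \to \tilde v_L$, the unique positive steady state on $[-L, L]$. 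Theorem~\ref{Thm315} gives $\lambda_1 \to -\tilde a$ as $L \to \infty$, from which a standard monotonicity argument forces $\tilde v_L \to \tilde a/c_2$ locally uniformly; letting $\varepsilon \to 0$ yields (b).

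Step (a) follows from (b) via Theorem~\ref{Comparison-Principle-1} with the constant upper-solution $M_{n+1} + \varepsilon$: if $a_1 - c_1 N_{n+1} \le 0$, a decay argument analogous to Theorem~\ref{uv-vanishing} forces $u \to 0$ on $[g(t), h(t)]$; otherwise an analogous eigenfunction comparison on the $u$-equation over a large interval $[-L, L] \subset [g(t), h(t)]$ delivers the required bound. Letting $n \to \infty$ in (a) and (b) then yields $\lim_{t\to\infty} u(t,x) = 0$ and $\lim_{t\to\infty} v(t,x) = a_2/c_2$ locally uniformly. The principal obstacle is the asymmetry between the two equations: $u$ lives on the moving set $[g(t), h(t)]$ while the lower bound for $v$ produced by the eigenfunction construction is only locally uniform. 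Bridging this in step (a) requires choosing $L$ large enough that the tail $\int_{|z| > L} J$ is negligible (possible because $\int_\mathbb R J = 1$) and invoking the eigenvalue framework of Theorems~\ref{Thm315} and~\ref{Thm316} under \textbf{(F1)} for both equations.
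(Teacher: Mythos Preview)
Your iterative scheme has a real gap at step~(a) for $n\ge1$. You claim $\limsup_{t\to\infty} u(t,x)\le M_n+\varepsilon$ uniformly on the full moving interval $[g(t),h(t)]$, but your only tool for improving the bound on $u$ is the lower bound $v\ge N_n-\varepsilon$, which you have established (correctly) only on compact subsets of~$\mathbb R$. Since $h_\infty-g_\infty=\infty$, the region near the advancing front eventually lies outside any fixed compact set, and there you have no improvement over $v\ge0$; the $u$-equation in that region gives only $u\lesssim a_1/b_1=M_0$, not $M_1$. Yet step~(b) for the next index requires the \emph{global} inequality $u\le M_n+\varepsilon$ on all of $[g(t),h(t)]$ (equivalently on $\mathbb R$ after extension by~$0$) in order to write $v_t\ge d_2(J*v-v)+v(\tilde a-c_2v)$ on~$\mathbb R$. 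Thus the iteration stalls after $n=0$. Neither of your two suggested fixes closes this: the ``decay argument analogous to Theorem~\ref{uv-vanishing}'' relies on $h_\infty-g_\infty<\infty$, which is false here; and an eigenfunction comparison on $[-L,L]$ produces only a local bound on $u$, contradicting what (a) asserts. The tail-of-$J$ idea does not obviously help either, because the portion of $\int_{g(t)}^{h(t)}J(x-y)u(t,y)\,dy$ coming from outside $[-L,L]$ enters the $u$-equation with a \emph{positive} sign and therefore works against any would-be supersolution on $[-L,L]$.

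The paper circumvents the iteration entirely. After the single global step $u\le a_1/b_1+\epsilon_1$ and the single local step $v\ge\tilde A$ on $[-L,L]$, it introduces a \emph{coupled} auxiliary competitive system on $[-L,L]$ with constant boundary and initial data $(\overline u,\underline v)=(a_1/b_1+\epsilon_1,\tilde A)$. Because this pair is an upper pair in the competitive order~$\le_2$, the solution $(\overline u,\underline v)$ is monotone in time and converges to a stationary limit $(\overline u_L,\underline v_L)$; letting $L\to\infty$ yields a bounded steady state $(\overline u^*,\underline v^*)$ on~$\mathbb R$. This steady state is then compared with the spatially constant solution of the full nonlocal Cauchy problem on~$\mathbb R$ with the same initial data, which coincides with the solution of the underlying ODE system and converges to $(0,a_2/c_2)$ by the Morita--Tachibana result~\cite{MoritaSIAM2009}. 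That single comparison forces $\overline u^*\equiv0$ and $\underline v^*\ge a_2/c_2$, finishing the proof in one pass without any scalar iteration.
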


\begin{proof}
Note that for $t>0$ and $x\in\mathbb{R}$, we have $u(t,x)\le\bar u(t)$ and
$v(t,x)\le\bar v(t)$, here $\bar u(t)$ and $\bar v(t)$ are separately defined by
\begin{equation*}
\left\{
\begin{aligned}
&\bar u(t)=\frac{a_1}{b_1}e^{\frac{a_1}{b_1}t}\left(e^{\frac{a_1}{b_1}t}-1
+\frac{a_1}{b_1\|u_0\|_{L^\infty}}\right)^{-1},\\
&\bar v(t)=\frac{a_2}{c_2}e^{\frac{a_2}{c_2}t}\left(
e^{\frac{a_2}{c_2}t}-1+\frac{a_2}{c_2\|v_0\|_{L^\infty}}\right)^{-1}.
\end{aligned}
\right.
\end{equation*}
Then it is obvious that
$$
\limsup_{t\rightarrow+\infty}u(t,x)\le\frac{a_1}{b_1},~~~
\limsup_{t\rightarrow+\infty}v(t,x)\le\frac{a_2}{c_2}~\text{ uniformly for }x\in\mathbb{R}.
$$
And hence for $0<\epsilon_1
<\frac 12\left(\frac{a_2}{b_2}-\frac{a_1}{b_1}\right)$, we
can find some $t_0>0$ such that $u(t,x)\le\frac{a_1}{b_1}
+\epsilon_1$ for $t\ge t_0$ and $x\in\mathbb{R}$. Then by defining $A_1=b_2\left(\frac{a_2}{b_2}-\frac{a_1}{b_1}
\epsilon_1\right)$, we have
\begin{equation}
\left\{
\begin{aligned}
&v_t-d_2\left[\int_\mathbb{R}J(x-y)v(t,y)dy-v\right]\ge v\left[A_1-c_2v\right],
& &t\ge t_0,~x\in\mathbb{R},\\
&v(t_0,x)>0,& &x\in\mathbb{R}.
\end{aligned}
\right.
\label{319}
\end{equation}
It follows from the comparison principal that $v(t,x)\ge v^*(t,x)$ for all
$t\ge t_0$ and $x\in\mathbb{R}$, where $v^*(t,x)$ is the solution to
\begin{equation}
\left\{
\begin{aligned}
&v^*_t-d_2\left[\int_{\mathbb{R}}J(x-y)v^*(t,y)dy-v^*\right]=v^*\left[A_1-c_2v^*\right],& &t\ge t_0,~x\in\mathbb{R},\\
&v^*(t_0,x)=v(t_0,x)>0,& &x\in\mathbb{R}.
\end{aligned}
\right.
\label{320}
\end{equation}
For given $L>\frac{R^*}{2}$, we can find some $t_L>t_0$ such that $h(t_L)-g(t_L)\ge2L$, and $v^*(t,x)\ge v_L(t,x)$ for $t\ge t_L$ and $x\in(-L,L)$, where $v_L(t,x)$ verifies
\begin{equation}
\left\{
\begin{aligned}
&v_t-d_2\left[\int_{-L}^{L}J(x-y)v(t,y)-v\right]=v\left[A_1-c_2v\right],& &t\ge t_L,~x\in(-L,L),\\
&v(t_L,x)=v^*(t_L,x)>0,& &x\in(-L,L).
\end{aligned}
\right.
\label{v_L-lower-bounded}
\end{equation}
Denoting $\lambda_1(d_2,A_1,\Omega_L)$ with $\Omega_L=[-L,L]$ by the principal eigenvalue of problem (\ref{v_L-lower-bounded}), it then follows from assumption {\rm\bf(F1)} and Theorem \ref{Thm315} that
$$
\lambda_1(d_2,A_1,\Omega_L)<\lambda_1(d_2,A_1,\Omega_{R^*})=0~\text{
with }~|\Omega_{R^*}|=R^*.
$$
And hence it follows from Theorem \ref{Thm313} that
$$
\lim_{t\rightarrow+\infty}v_L(t,x)=\frac{A_1}{c_2}=\frac{b_2}{c_2}\left(\frac{a_2}{b_2}-\frac{a_1}{b_1}-\epsilon_1\right)~\text{ uniformly in any bounded subset of }~\mathbb{R}.
$$
So for the given $L>0$, we can find some $t_L>t_1$ such that
\begin{equation}
v(t,x)\ge v^*(t,x)\ge\tilde A:=\frac{A_1}{2c_2}~\text{ for }~t\ge t_L
~\text{ and }-L\le x\le L.
\label{321}
\end{equation}

Check the equation of $u$, note that $u$ now satisfies
\begin{equation}
\left\{
\begin{aligned}
&u_t-d_1\left[\int_{g(t)}^{h(t)}(x-y)u(t,y)dy-u\right]=u(a_1-b_1u-c_1v),
& &t>t_L,~x\in(g(t),h(t)),\\
&u(t,x)=0,& &t>t_L,~x\not\in(g(t),h(t)),\\
&u(t,x)\le\frac{a_1}{b_1}+\epsilon_1,& &t>t_L,~x\in(g(t),h(t)).
\end{aligned}
\right.
\label{322}
\end{equation}
Then it follows from Comparison
Principle that $u\le\overline u$ and $v\ge\underline v$ for
$t\ge t_L$ and $x\in[-L,L]$, where $(\overline u,\underline v)$
is the solution of
\begin{equation}
\left\{
\begin{aligned}
&\overline u_t-d_1\left[\int_{-L}^{-L}J(x-y)\overline u(t,y)dy-\overline u\right]=\overline u(a_1
-b_1\overline u-c_1\underline v),& &t>t_L,-L<x<L,\\
&\underline v_t-d_2\left[\int_{-L}^{-L}J(x-y)\underline v(t,y)dy-\underline v\right]=\underline v
(a_2-b_2\overline u-c_2\underline v ),& &t>t_L,-L<x<L,\\
&\overline u(t_L,x)=\overline u_{t_L}(x)=\frac{a_1}{b_1}+\epsilon_1,
~\underline v(t_L,x)=\underline v_{t_L}(x)=\tilde A,& &-L\le x\le L,\\
&\overline u(t,-L)=\overline u(t,L)=\frac{a_1}{b_1}+\epsilon_1,
~\underline v(t,L)=\underline v(t,-L)=\tilde A,& &t\ge t_L
\end{aligned}
\right.
\label{323}
\end{equation}
with $\left(\frac{a_1}{b_1}+\epsilon_1,\tilde A\right)$
a pair of upper solution. In view of the dependence of
solutions on initial data, we denote $(u(t,x;u_0,v_0),v
(t,x;u_0,v_0))$ (resp.$(\overline u(t,x;\overline u_{t_L},
\underline v_{t_L}),\underline v(t,x;\overline u_{t_L},
\underline v_{t_L}))$) by the solution of problem (\ref{101})
(resp. (\ref{323})). Note that $f_1^*:=\overline u(a_1-b_1
\overline u-c_1\underline v)$ is nonincreasing in $\underline v$
and $f_2^*:=\underline v(a_2-b_2\overline u-c_2\underline v )$
is nonincreasing in $\overline u$, then (\ref{323}) generates
a monotone dynamical system with respect to the order
$$
(u_1,v_1)\le_2(u_2,v_2)~\text{ if }~u_1\le u_2~\text{ and }~v_1\ge v_2.
$$
This implies that for $t_2>t_1\ge t_L$ and $x\in[-L,L]$,
$$
\left(\overline u(t_2,x;\overline u_{t_L},\underline v_{t_L}),
\underline v(t_2,x;\overline u_{t_L},\underline v_{t_L})\right)
\le_2\left(\overline u(t_1,x;\overline u_{t_L},\underline v_{t_L}),
\underline v(t_1,x;\overline u_{t_L},\underline v_{t_L})\right)
\le_2\left(\frac{a_1}{b_1}+\epsilon_1,\tilde A\right).
$$
Hence
$\lim_{t\rightarrow+\infty}\left(\overline u(t,x;\overline
u_{t_L},\underline v_{t_L}),\underline v(t,x;\overline u_{t_L},
\underline v_{t_L})\right)=\left(\overline u_L(x),\underline v_L
(x)\right)$ uniformly in $[-L,L]$, where $\left(\overline u_L,
\underline v_L\right)$ satisfies
\begin{equation}
\left\{
\begin{aligned}
&-d_1\left[\int_{-L}^LJ(x-y)\overline u_L(y)dy-\overline u_L\right]=
\overline u_L(a_1
-b_1\overline u_L-c_1\underline v_L),& &-L<x<L,\\
&-d_2\left[\int_{-L}^LJ(x-y)\underline v_L(y)dy-\underline v_L\right]=
\underline v_L
(a_2-b_2\overline u_L-c_2\underline v_L ),& &-L<x<L,\\
&\overline u(-L)=\overline u(L)=\frac{a_1}{b_1}+\epsilon_1,
~\underline v(L)=\underline v(-L)=\tilde A.
\end{aligned}
\right.
\label{324}
\end{equation}
By comparing the boundary conditions in $(\ref{324})$
we then observe that for $0<L_1<L_2$, $\overline u_{L_1}
(x)\ge\overline u_{L_2}(x)$ and $\underline v_{L_1}(x)\le
\underline v_{L_2}(x)$ in $[-L_1,L_1]$. Hence, by letting
$L\rightarrow\infty$ and then a diagonal procedure, there
is $\left(\overline u_L(x),\underline v_L(x)\right)\rightarrow
(\overline u^*(x),\underline v^*(x))$ uniformly on any
compact subset of $\mathbb{R}$, where $(\overline u^*,
\underline v^*)$ satisfies
\begin{equation}
\left\{
\begin{aligned}
&-d_1\left[\int_{-L}^LJ(x-y)\overline u^*(y)dy-\overline u^*\right]=
\overline u^*(a_1
-b_1\overline u^*-c_1\underline v^*),& &x\in\mathbb{R},\\
&-d_2\left[\int_{-L}^LJ(x-y)\underline v^*(y)dy-\underline v^*\right]=
\underline v^*
(a_2-b_2\overline u^*-c_2\underline v^* ),& &x\in\mathbb{R},\\
&\overline u^*(x)\le\frac{a_1}{b_1}+\epsilon_1,
~\underline v^*(x)\ge\tilde A,& &x\in\mathbb{R}.
\end{aligned}
\right.
\label{325}
\end{equation}

On the other hand, since $\frac{a_1}{a_2}<\min\left\{
\frac{b_1}{b_2},\frac{c_1}{c_2}\right\}$, then the
solution $(u_1,v_1)$ of the following problem
\begin{equation}
\left\{
\begin{aligned}
&(u_1)_t=u_1(a_1-b_1u_1-c_1v_1),& &t>0,\\
&(v_1)_t=v_1(a_2-b_2u_1-c_2v_1),& &t>0,\\
&u_1(0)=\frac{a_1}{b_1}+\epsilon_1,~v_1(0)=\tilde A,
\end{aligned}
\right.
\label{326}
\end{equation}
satisfies $\lim_{t\rightarrow+\infty}(u_1(t),v_1(t))=\left(
0,\frac{a_2}{c_2}\right)$, see Morita et al.\cite{MoritaSIAM2009}.
This further implies that solution $(U(t,x),V(t,x))$ of the problem
\begin{equation}
\left\{
\begin{aligned}
&U_t-d_1(J*U-U)=U(a_1-b_1U-c_1V),& &t>0,~x\in\mathbb{R},\\
&V_t-d_2(J*V-V)=V(a_2-b_2U-c_2V),& &t>0,~x\in\mathbb{R},\\
&U(0,x)=\frac{a_1}{b_1}+\epsilon_1,~V(0,x)=\tilde A,& &x\in\mathbb{R}
\end{aligned}
\right.
\label{327}
\end{equation}
satisfies $\lim_{t\rightarrow+\infty}\left(U(t,x),V(t,x)
\right)=\left(0,\frac{a_2}{c_2}\right)$ uniformly in any
bounded subset of $\mathbb{R}$. Meanwhile, by using the
comparison principle to problems (\ref{325}) and (\ref{327})
we obtain that
$$
\overline u^*(x)\le U(t,x)~\text{ and }~\underline v^*(x)\ge V(t,x)~\text{ for all }~x\in\mathbb{R},
$$
which indicates
that $\overline u^*(x)=0$ and $\underline v^*(x)\ge\frac{a_2}
{c_2}$ for all $x\in\mathbb{R}$, and then $\overline u_L(x)=0$
and $\underline v_L(x)\ge\frac{a_2}{c_2}$ for $x\in(-L,L)$,
and hence
$$
\overline u(t,x;\overline u_{t_L},\underline v_{t_L})
\rightarrow0~\text{ and }~\underline v(t,x;\overline u_{t_L},\underline
v_{t_L})\ge\frac{a_2}{c_2}~\text{ as }~t\rightarrow+\infty.
$$
Further, we get $u(t,x)=0$ and $v(t,x)\ge\frac{a_2}{c_2}$ as $t\rightarrow
+\infty$. Then
$$
\lim_{t\rightarrow+\infty}\|u(t,\cdot)\|_{C([g(t),h(t)])}=0~\text{ and }~\lim_{
t\rightarrow+\infty}\|v(t,\cdot)\|_{C(\mathbb{R})}=\frac{a_2}{c_2}.
$$
This completes the proof.
\end{proof}

\subsection{Spreading Case $(h_\infty-g_\infty=\infty)$ with $\frac{a_1}{a_2}>\max\left\{\frac{b_1}{b_2},~\frac{c_1}{c_2}\right\}$}
\noindent

The condition $\frac{a_1}{a_2}>\max\left\{\frac{b_1}{b_2},~\frac{c_1}{c_2}\right\}$ here means that when compared with the species $v$, the species $u$ is an superior competitor.

It is stated in section 3.1 that the eigenvalue problem
$$
d_1\left(J-I\right)\phi(x)+a_1\phi(x)=-\lambda_1\phi(x)\text{
in }\Omega,~\phi=0\text{ for all }x\not\in\Omega~\text{ and }
~\phi|_{\overline\Omega}~\text{ is continuous}
$$
admits an eigen pair $(\lambda_1(d_1,a_1,\Omega),\phi_1(x))$.
And if we assume further that $a_1<d_1$, then there exists a
unique $R^*$ such that $\lambda_1(d_1,a_1,\Omega)=0$ when
$|\Omega|=R^*$. In what follows, we assume that
\begin{description}
\item[(F2)]$a_1<d_1$.
\end{description}

\begin{theorem}
Assume that $\frac{a_1}{a_2}>\max\left\{\frac{b_1}{b_2},~\frac{c_1}{c_2}\right\}$. If $h_\infty-g_\infty<\infty$, then $h_\infty-g_\infty\le R^*$.
\label{Thm331}
\end{theorem}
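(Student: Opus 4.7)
The plan is an argument by contradiction: suppose $L_\infty := h_\infty - g_\infty$ is finite but satisfies $L_\infty > R^*$. Since vanishing occurs, Theorem \ref{uv-vanishing} gives $\|u(t,\cdot)\|_{C([g(t),h(t)])}\to 0$ as $t\to\infty$. I will combine this with the superior competitor assumption and the eigenvalue criterion to build a positive lower solution for $u$ on a fixed subinterval, reaching a contradiction.

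First I would establish a uniform eventual upper bound on $v$. Since $v_t \le d_2[J*v - v] + v(a_2 - c_2 v)$, comparison with the spatially homogeneous super-solution solving the logistic ODE started at $\|v_0\|_{L^\infty}$ yields $\limsup_{t\to\infty}v(t,x) \le a_2/c_2$ uniformly in $x\in\mathbb{R}$. Hence for every $\epsilon>0$ there exists $T_\epsilon>0$ with $v(t,x)\le a_2/c_2+\epsilon$ for all $t\ge T_\epsilon$ and $x\in\mathbb{R}$. The superior competitor hypothesis $a_1/a_2>c_1/c_2$ ensures $A_\epsilon:=a_1-c_1(a_2/c_2+\epsilon)>0$ for $\epsilon$ sufficiently small.

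Next, choose $L'\in(R^*,L_\infty)$ and a closed subinterval $I\subset(g_\infty,h_\infty)$ with $|I|=L'$. Since $g(t)\searrow g_\infty$ and $h(t)\nearrow h_\infty$ monotonically, there is $T^*\ge T_\epsilon$ with $I\subset(g(t),h(t))$ for all $t\ge T^*$. Using $u\ge 0$, $J\ge 0$, and the upper bound on $v$, for $t\ge T^*$ and $x\in I$ one obtains the differential inequality
\begin{equation*}
u_t\ge d_1\Bigl[\int_I J(x-y)u(t,y)\,dy-u\Bigr]+u(A_\epsilon-b_1 u).
\end{equation*}
Let $\underline u$ denote the solution of the associated nonlocal logistic equation on $I$ (extended by zero outside $I$) with initial datum $\underline u(T^*,x)=u(T^*,x)|_I>0$. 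By the comparison argument underlying Lemma \ref{Maximum-Principle}, $u(t,x)\ge\underline u(t,x)$ on $I$ for $t\ge T^*$.

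Finally, the spectral theory closes the argument. By Theorems \ref{Thm315}--\ref{Thm316}, $\lambda_1(d_1,a_1,I)<0$ since $|I|>R^*$, and the variational formula in Theorem \ref{Thm311} shows $\lambda_1(d_1,a,I)$ is continuous and strictly decreasing in $a$, so $\lambda_1(d_1,A_\epsilon,I)<0$ for small $\epsilon$. Theorem \ref{Thm313} then gives $\underline u(t,x)\to u_I^*(x)$ uniformly on $I$, where $u_I^*>0$ is the positive steady state of the nonlocal logistic problem on $I$. This contradicts $\|u(t,\cdot)\|_{C([g(t),h(t)])}\to 0$ and forces $L_\infty\le R^*$. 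I expect the main technical difficulty to lie in justifying the passage from the moving-domain operator $\int_{g(t)}^{h(t)}J(x-y)u\,dy$ to the fixed-domain operator $\int_I J(x-y)u\,dy$, together with the coefficient replacement $c_1v\mapsto c_1(a_2/c_2+\epsilon)$: this must produce a \emph{strict} differential inequality whose lower solution inherits a strictly negative principal eigenvalue, and the monotonicity of $\lambda_1$ in both the domain size and the reaction coefficient is what guarantees this robustness.
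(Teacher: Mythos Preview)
Your overall strategy is exactly the paper's: argue by contradiction, use $\limsup v\le a_2/c_2$ to bound $v$ above by $a_2/c_2+\epsilon$ after some time, trap $u$ from below by the solution of a nonlocal logistic problem on a fixed subinterval of $(g_\infty,h_\infty)$ of length exceeding $R^*$, invoke Theorem~\ref{Thm313} to obtain a positive lower limit for $u$, and contradict Theorem~\ref{uv-vanishing}. The paper carries out precisely these steps (it also adds an upper comparison on $(g_\infty,h_\infty)$, which is not needed for the contradiction).

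There is, however, a genuine flaw in your eigenvalue step. You argue that $\lambda_1(d_1,a,I)$ is continuous and strictly decreasing in $a$ and conclude $\lambda_1(d_1,A_\epsilon,I)<0$ ``for small $\epsilon$''. But $A_\epsilon=a_1-c_1(a_2/c_2+\epsilon)$ does \emph{not} tend to $a_1$ as $\epsilon\to 0$; it tends to $a_1-c_1a_2/c_2<a_1$. Since $\lambda_1$ is decreasing in $a$ and $A_\epsilon<a_1$, you only get $\lambda_1(d_1,A_\epsilon,I)>\lambda_1(d_1,a_1,I)$, and the mere negativity of the right-hand side (which is all that $|I|>R^*$ gives) does not force the left-hand side to be negative. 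Indeed, from the variational formula $\lambda_1(d_1,a,I)=\lambda_1(d_1,0,I)-a$, so
\[
\lambda_1(d_1,A_\epsilon,I)=\lambda_1(d_1,a_1,I)+c_1\Bigl(\tfrac{a_2}{c_2}+\epsilon\Bigr),
\]
and what is actually required is $\lambda_1(d_1,a_1,I)<-c_1a_2/c_2$, a strictly stronger condition on $|I|$ than $|I|>R^*$. The paper glosses over this same point, simply asserting $\lambda_1^{\epsilon_2}(\infty)<\lambda_1(R^*)=0$ without justification; so your proposal matches the paper's argument, including this unjustified step, but your attempted justification via ``small $\epsilon$'' is incorrect as written.
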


\begin{proof}
We first prove that $h_\infty-g_\infty\le R^*$. Otherwise $h_\infty-g_\infty>R^*$ and there exists $T_1>0$ such
that $h(t)>h_\infty-\epsilon_2$, $g(t)<g_\infty+\epsilon_2$
and $h(t)-g(t)>h_\infty-g_\infty-2\epsilon_2>R^*$ for all
$t\ge T_1$ and some small $\epsilon_2$ with $0<\epsilon_2<\frac 12\left(\frac {a_1}{c_1}-\frac{a_2}{c_2}\right)$.

Since $\limsup_{t\rightarrow+\infty}v(t,x)\le\frac{a_2}{c_2}$, for the above
$\epsilon_2$ we can find $T_2\ge T_1$ such that $v(t,x)\le\frac{a_2}{c_2}+\epsilon_2:=A_2$ for all $t\ge T_2$ and $x\in\mathbb{R}$.

Denote $\Omega_\infty^{\epsilon_2}=(g_\infty+\epsilon_2,h_\infty-\epsilon_2)$, and consider the following problem
\begin{equation}
\left\{
\begin{aligned}
&w_t-d_1\left[\int_{\Omega_\infty^{\epsilon_2}}J(x-y)w(t,y)dy-w\right]
=w\left[a_1-c_1A_2-b_1w\right],& &t\ge T_2,~x\in\Omega_\infty^{\epsilon_2},\\
&w(t,x)=0,& &t\ge T_2,~x\not\in\Omega_\infty^{\epsilon_2},\\
&w(T_2,x)=u(T_2,x),& &x\in\Omega_\infty^{\epsilon_2}.
\end{aligned}
\right.
\label{331}
\end{equation}
It is well-known that (see\cite{KaoCHYDCDS2010,Hutson2003JMB})
problem (\ref{331}) admits a unique positive solution denoted
by $\underline w(t,x)=\underline w_{\epsilon_2}(t,x)$. It then
follows from the comparison principle that
$$
u(t,x)\ge\underline w(t,x)~\text{ for }~t>T_2~\text{ and }~x\in[g_\infty+\epsilon_2,h_\infty-\epsilon_2].
$$
In addition, if we use $\lambda_1^{\epsilon_2}(\infty)$ to denote the
principal eigenvalue of problem (\ref{331}), then $\lambda_1
^{\epsilon_2}(\infty)<\lambda_1(R^*)=0$. It then follows
from Theorem \ref{Thm313} (see also Hutson et al.\cite[Theorem 3.6]{Hutson2003JMB}) that
$$
\underline w(t,x)\rightarrow\frac{a_1}{b_1}-\frac{c_1}{b_1}
A_2~\text{ in }~C([g_\infty+
\epsilon_2,h_\infty-\epsilon_2])~\text{ as }~t\rightarrow+\infty.
$$
It turns out that $\liminf_{t\rightarrow+\infty}u(t,x)\ge\frac{a_1}{b_1}-\frac{c_1}{b_1}A_2>0$ uniformly in $[g_\infty+\epsilon_2,h_\infty-\epsilon_2]$.

Similarly, the following problem
\begin{equation}
\left\{
\begin{aligned}
&w_t-d_1(J*w-w)=w\left[a_1-c_1A_2-b_1w\right],& &t\ge T_2,~x\in(g_\infty,h_\infty),\\
&w(t,x)=0,& &t\ge T_2,~x\not\in(g_\infty,h_\infty),\\
&w(T_2,x)=\tilde u(T_2,x),& &x\in(g_\infty,h_\infty).
\end{aligned}
\right.
\label{332}
\end{equation}
with $\tilde u(T_2,x)=u(T_2,x)$ for $x\in[g(T_2),h(T_2)]$ and
$\tilde u(T_2,x)=0$ if $x\in(g_\infty,g(T_2))\cup(h(T_2),h_\infty)$
admits a unique positive solution $\bar w(t,x)$ such that
$u(t,x)\le\bar w(t,x)$ for $t>T_2$ and $x\in[g(t),h(t)]$,
and
$$
\bar w(t,x)\rightarrow\frac{a_1}{b_1}-\frac{c_1}{b_1}A_2~\text{ in }~C([g_\infty,h_\infty])~\text{ as }~t\rightarrow+\infty.
$$
Thus, there holds $\limsup_{t\rightarrow+\infty}u(t,x)\le\frac{a_1}{b_1}-\frac{c_1}{b_1}A_2$
for $x\in[g_\infty,h_\infty]$. By taking $\epsilon_2\rightarrow0$ deduces that
$\lim_{t\rightarrow+\infty}
u(t,x)=\frac{a_2}{b_1}\left(\frac{a_1}{a_2}-\frac{c_1}{c_2}\right)>0$
for $x\in[g_\infty,h_\infty]$. Combining this with Theorem \ref{uv-vanishing}
immediately deduces that $h_\infty-g_\infty=\infty$, this contradiction proves that $h_\infty-g_\infty<R^*$.
\end{proof}

Theorem \ref{Thm331} also implies that if $2h_0\ge R^*$,
then $h_\infty-g_\infty=\infty$.

\begin{theorem}
Assume that $\frac{a_1}{a_2}>\max\left\{\frac{b_1}{b_2},~\frac{c_1}{c_2}\right\}$ holds. Let $(u,v,g,h)$ be the
unique positive solution of (\ref{101}) with $h_\infty
-g_\infty=\infty$, then $\lim_{t\rightarrow+\infty}(u,v)(t,x)
=\left(\frac{a_1}{b_1},0\right)$ uniformly in any bounded
subset of $\mathbb{R}$.
\label{Thm332}
\end{theorem}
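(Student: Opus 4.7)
The plan is to mirror the proof of Theorem \ref{Thm325}, with the roles of $u$ and $v$ reversed and the cooperative order $\le_2$ replaced by its opposite $\preceq$ defined by $(u_1,v_1)\preceq(u_2,v_2)$ iff $u_1\ge u_2$ and $v_1\le v_2$. Under $\preceq$ the competition system is again monotone, and the Lotka--Volterra ODE attractor driving the argument is now $(a_1/b_1,0)$ instead of $(0,a_2/c_2)$.

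First I would establish the two a priori pieces. The spatially homogeneous logistic comparison, exactly as in the opening of the proof of Theorem \ref{Thm325}, yields
$$\limsup_{t\to\infty}u(t,x)\le \tfrac{a_1}{b_1},\qquad \limsup_{t\to\infty}v(t,x)\le \tfrac{a_2}{c_2}$$
uniformly in $x\in\mathbb{R}$. Fix $\epsilon_1>0$ so small that $A_1:=a_1-c_1\bigl(\tfrac{a_2}{c_2}+\epsilon_1\bigr)$ satisfies $0<A_1<d_1$, which is possible because $a_1/a_2>c_1/c_2$ and (F2) holds. Pick $T_0$ with $v(t,x)\le \tfrac{a_2}{c_2}+\epsilon_1$ for $t\ge T_0$, then by Theorem \ref{Thm316} choose $L$ large enough that $\lambda_1(d_1,A_1,[-L,L])<0$, and $T_L\ge T_0$ with $(-L,L)\subset(g(t),h(t))$ for $t\ge T_L$ (using $h_\infty-g_\infty=\infty$). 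On $[-L,L]\times[T_L,\infty)$ the function $u$ is a supersolution of the truncated logistic equation $w_t=d_1[\int_{-L}^L J(x-y)w\,dy-w]+w(A_1-b_1w)$, so Lemma \ref{lemma-MP2} together with Theorem \ref{Thm313} gives $\liminf_{t\to\infty}u(t,x)\ge w_L^*(x)>0$ uniformly on $[-L,L]$, where $w_L^*$ is the unique positive steady state of that truncated logistic problem.

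Setting $m_L:=\min_{[-L,L]}w_L^*>0$, picking $\delta\in(0,m_L)$, and choosing $T_L^\sharp$ so large that $u\ge m_L-\delta$ and $v\le \tfrac{a_2}{c_2}+\epsilon_1$ on $[-L,L]$ for $t\ge T_L^\sharp$, I would introduce the auxiliary system (the analogue of (\ref{323}) with the upper/lower roles of the two species interchanged)
$$\left\{\begin{aligned}
\underline U_t-d_1\Big[\int_{-L}^L J(x-y)\underline U\,dy-\underline U\Big]&=\underline U(a_1-b_1\underline U-c_1\overline V),\\
\overline V_t-d_2\Big[\int_{-L}^L J(x-y)\overline V\,dy-\overline V\Big]&=\overline V(a_2-b_2\underline U-c_2\overline V),
\end{aligned}\right.$$
on $[-L,L]\times[T_L^\sharp,\infty)$ with initial/lateral data $\underline U\equiv m_L-\delta$ and $\overline V\equiv \tfrac{a_2}{c_2}+\epsilon_1$. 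An adaptation of the comparison argument of Theorem \ref{Comparison-Principle-1} to this truncated setting gives $u\ge\underline U$ and $v\le\overline V$ on $[-L,L]$, and $\preceq$-monotonicity forces $(\underline U,\overline V)\to(\underline U_L,\overline V_L)$ as $t\to\infty$, with $(\underline U_L,\overline V_L)$ a steady state of the truncated elliptic system. A diagonal procedure in $L\to\infty$ extracts a locally uniform limit $(\underline U^*,\overline V^*)$ solving the stationary nonlocal system on $\mathbb{R}$ with $\underline U^*\ge A_1/b_1$ and $0\le\overline V^*\le \tfrac{a_2}{c_2}+\epsilon_1$. Finally, the constant pair $(U_1(t),V_1(t))$ solving the Lotka--Volterra ODE from initial data $\bigl(A_1/b_1,\tfrac{a_2}{c_2}+\epsilon_1\bigr)$ is a spatially constant solution of the PDE analogue on $\mathbb{R}$ (since $J*c=c$), and under $a_1/a_2>\max\{b_1/b_2,c_1/c_2\}$ it tends to $(a_1/b_1,0)$ by the classical Lotka--Volterra result (\cite{MoritaSIAM2009}, as used in Theorem \ref{Thm325}). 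Since $(\underline U^*,\overline V^*)\preceq\bigl(A_1/b_1,\tfrac{a_2}{c_2}+\epsilon_1\bigr)$ at time $0$, $\preceq$-monotonicity of the PDE semiflow gives $(\underline U^*,\overline V^*)\preceq(U_1(t),V_1(t))$ for every $t>0$; letting $t\to\infty$ yields $\underline U^*\ge a_1/b_1$ and $\overline V^*\le 0$. Combined with Step 1 and non-negativity of $u,v$, this forces $u(t,x)\to a_1/b_1$ and $v(t,x)\to 0$ uniformly on any bounded subset of $\mathbb{R}$.

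The hard part is the comparison $u\ge\underline U$, $v\le\overline V$ on $[-L,L]$: the actual $u$-equation uses the integral $\int_{g(t)}^{h(t)}$ over the full free-boundary region while $\underline U$ uses $\int_{-L}^L$, and the $v$-equation uses $\int_{\mathbb{R}}$ while $\overline V$ uses $\int_{-L}^L$. The proof must therefore upgrade Theorem \ref{Comparison-Principle-1} to allow these mismatched dispersal supports, which is done by exploiting $u,v\ge 0$ outside $[-L,L]$ (so that dropping the outside contributions yields inequalities in the right direction) together with the exponential-weight trick $Z=(\overline u-u)e^{k_4 t}$ already used in the proof of Theorem \ref{Comparison-Principle-1}, adapted to the truncated integrals and to the reversed order $\preceq$.
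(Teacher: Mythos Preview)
Your outline coincides with the paper's: logistic a~priori bounds, a positive locally-uniform lower bound for $u$, the truncated coupled system on $[-L,L]$ with constant data, $\preceq$-monotone convergence to a steady state, the limit $L\to\infty$, and comparison with the ODE attractor via \cite{MoritaSIAM2009}. The only structural difference is in how the lower bound for $u$ is obtained: the paper compares $u$ with the single-species \emph{free boundary} subproblem \eqref{335}, which starts on an interval of length $>R^*$ and therefore spreads (by the results of \cite{CDLL-2018}) to the constant $\tilde B=(a_1-c_1B_1)/b_1$; you instead compare with the fixed-domain truncated logistic problem on $[-L,L]$ via Lemma~\ref{lemma-MP2} and Theorem~\ref{Thm313}. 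Both produce what is needed, and your route has the minor advantage of not invoking the single-species free-boundary convergence theory.

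One point in your last paragraph is incorrect, however. Dropping the outside piece $\int_{(g,h)\setminus[-L,L]}J(x-y)u\,dy\ge 0$ indeed turns $u$ into a supersolution of the truncated $\underline U$-equation, which is the right direction for $u\ge\underline U$. But the same manipulation on $\int_{\mathbb R}J(x-y)v\,dy$ also makes $v$ a \emph{supersolution} of the truncated $\overline V$-equation, whereas for $v\le\overline V$ you need $v$ to be a subsolution; the sign goes the wrong way. So ``$v\ge 0$ outside $[-L,L]$'' does not give the inequality in the right direction for the $v$-comparison. The relevant information outside $[-L,L]$ is rather the a~priori bound $v\le a_2/c_2+\epsilon_1$, combined with the fact that $\overline V\le a_2/c_2+\epsilon_1$ on $[-L,L]$ (which follows from $\preceq$-monotonicity since $\overline V$ starts at that constant). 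The paper's own proof simply asserts the comparison $u\ge\underline u$, $v\le\overline v$ for \eqref{337} without addressing the mismatched integration domains at all, so this is a point on which you have gone further than the paper but not yet far enough.
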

\begin{proof}
The method here is similar as that in Theorem \ref{Thm325}. Since
$\limsup_{t\rightarrow+\infty}v(t,x)\le\frac{a_2}{c_2}$, then we can find $T_4>0$ large such that $v(t,x)\le\frac{a_2}{c_2}+\epsilon_3$ for $t\ge T_4$, where $0<\epsilon_3\ll\frac 12\left(\frac{a_1}{c_1}-\frac{a_2}{c_2}\right)$. Meanwhile,
by $h_\infty-g_\infty=\infty$, we can find $T_5$ with $T_5\ge T_4$ such that $h(t)-g(t)>R^*$ for $t\ge T_5$.

Denoting $B_1=\frac{a_2}{c_2}+\epsilon_3$, using $(\underline u,\underline g,\underline h)$ to denote
the positive solution of the following problem
\begin{equation}
\left\{
\begin{aligned}
&\underline u_t-d_1\left[\int_{\underline g(t)}^{\underline h(t)}J(x-y)\underline u(t,y)dy-\underline u\right]=\underline u\left[a_1-c_1B_1-b_1\underline u\right],
& &t\ge T_5,~x\in(\underline g(t),\underline h(t)),\\
&\underline u(t,x)=0,& &t\ge T_5,~x\not\in(\underline g(t),\underline h(t)),\\
&\underline h'(t)=\mu\int_{\underline g(t)}^{\underline h(t)}
\int_{\underline h(t)}^{+\infty}J(x-y)\underline u(t,x)dydx,& &t\ge T_5,\\
&\underline g'(t)=-\mu\int_{\underline g(t)}^{\underline h(t)}
\int_{-\infty}^{\underline g(t)}J(x-y)\underline u(t,x)dydx,& &t\ge T_5,\\
&\underline u(T_5,x)=u(T_5,x),~\underline h(T_5)=h(T_5),~\underline g(T_5)=g(T_5),& &x\in(\underline g(T_5),\underline h(T_5)).
\end{aligned}
\right.
\label{335}
\end{equation}
Then $u(t,x)\ge\underline u(t,x)$, $g(t)\le\underline g(t)$ and
$h(t)\ge\underline h(t)$ for $t\ge T_5$ and $x\in[\underline g(t),
\underline h(t)]$ by the comparison principle.

In addition, we have $\underline h(T_5)-\underline g(T_5)>R^*$. Then for $t\ge T_5$, the principle eigenvalue $\lambda_1^*=\lambda_1^*(d_1,a_1-c_1B_1,\tilde\Omega(t))$ of (\ref{335}) satisfies $\lambda_1^*\le
\lambda_1^*(d_1,a_1-c_1B_1,\tilde\Omega(T_5))<0$, where
$\tilde\Omega(t)=(\underline g(t),\underline h(t))$.

Since $\lambda_1^*<0$, then it follows from Theorem \ref{Thm313}
that
$$
\lim_{t\rightarrow+\infty}\underline u(t,x)=\frac{a_2}{b_1}
\left(\frac{a_1}{a_2}-\frac{c_1}{c_2}-\frac{c_1}{a_2}\epsilon_3
\right):=\tilde B>0.
$$
Hence $\liminf_{t\rightarrow+\infty}u(t,x)\ge\tilde B$. Then we can find $T_l$ with $T_l\ge T_5$ and $l$ with $l\ge R^*$ such that $u(t,x)\ge\tilde B$ in $[T_l,\infty)\times[-l,l]$. Therefore, for our choices of
$T_l$ and $l$, we arrive at
$$
u\ge\underline u~\text{ and }~v\le\overline v~\text{ for }~t\ge T_l~\text{ and }~x\in[-l,l],
$$
where $(\underline u,\overline v)$ denote the solution of the
following problem
\begin{equation}
\left\{
\begin{aligned}
&\underline u_t(t,x)-d_1\left[\int_{-l}^lJ(x-y)\underline u(t,y)dy
-\underline u\right]
=\underline u(a_1-b_1\underline u-c_1\overline v),& &t\ge T_l,~x\in(-l,l),\\
&\overline v_t(t,x)-d_2\left[\int_{-l}^lJ(x-y)\overline v(t,y)dy-\overline v\right]
=\overline v(a_2-b_2\underline u-c_2\overline v),& &t\ge T_l,~x\in(-l,l),\\
&\underline u(T_l,x)=\underline u_{T_l}(x)=\tilde B,~\overline v
(T_l,x)=\overline v_{T_l}(x)=\frac{a_2}{c_2}+\epsilon_3, & &x\in(-l,l),\\
&\underline u(t,\pm l)=\tilde B,~\overline v(t,\pm l)=\frac{a_2}{c_2}
+\epsilon_3,& &t\ge T_l
\end{aligned}
\right.
\label{337}
\end{equation}
with $\left(\tilde B,\frac{a_2}{c_2}+\epsilon_3\right)$ a pair of lower solution. In view of the dependence of solutions on initial data, we denote $(\underline u(t,x;\underline u_{T_l},\overline v_{T_l}),\overline v(t,x;\underline u_{T_l},
\overline v_{T_l}))$ by the solution of problem (\ref{337}). Note that $f_1^{**}:=\underline u(a_1-b_1\underline u-c_1\overline v)$ is nonincreasing in $\overline v$ and $f_2^{**}:=\overline v(a_2-b_2\underline u-c_2
\overline v)$ is nonincreasing in $\underline u$, then (\ref{337}) generates a monotone dynamical system with respect to the order $\le_2$. This implies that for $t_2>t_1\ge T_l$ and $x\in[-l,l]$,
$$
\left(\tilde B,\frac{a_2}{c_2}+\epsilon_3\right)\le_2\left(
\underline u(t_2,x;\underline u_{T_l},\overline v_{T_l}),
\overline v(t_2,x;\underline u_{T_l},\overline v_{T_l})\right)
\le_2\left(\underline u(t_1,x;\underline u_{T_l},\overline v_{T_l}),
\overline v(t_1,x;\underline u_{T_l},\overline v_{T_l})\right).
$$
Hence
$\lim_{t\rightarrow+\infty}\left((\underline u(t,x;
\underline u_{T_l},\overline v_{T_l}),\overline v(t,x;
\underline u_{T_l},\overline v_{T_l}))\right)=\left(\overline u_l(x),\underline v_l(x)\right)$ uniformly in $[-l,l]$, where $\left(\underline u_l(x),\overline v_l(x)\right)$ satisfies
\begin{equation}
\left\{
\begin{aligned}
&-d_1\left[\int_{-l}^lJ(x-y)\underline u_l(y)dy-\underline u_l(x)\right]=\underline u_l(a_1-b_1\underline u_l-c_1\overline v_l),& &-l<x<l,\\
&-d_2\left[\int_{-l}^lJ(x-y)\overline v_l(y)dy-\overline v_l(x)\right]=\overline v_l
(a_2-b_2\underline u_l-c_2\overline v_l),& &-l<x<l,\\
&\underline u_l(-l)=\underline u_l(l)=\tilde B,
~\overline v_l(l)=\overline v_l(-l)=\frac{a_2}{c_2}+\epsilon_3
\end{aligned}
\right.
\label{338}
\end{equation}
and $\lim_{l\rightarrow\infty}\left(\underline u_l(x),\overline v_l(x)\right)=\left(
\underline u^*(x),\overline v^*(x)\right)$ with $\left(\underline u^*(x),\overline v^*(x)\right)$ satisfies
\begin{equation}
\left\{
\begin{aligned}
&-d_1(J*\underline u^*-\underline u^*)=\underline u^*(a_1
-b_1\underline u^*-c_1\overline v^*),& &x\in\mathbb{R},\\
&-d_2(J*\overline v^*-\overline v^*)=\overline v^*
(a_2-b_2\underline u^*-c_2\overline v^*),& &x\in\mathbb{R},\\
&\underline u^*(x)\ge\tilde B,~\overline v^*(l)\le\frac{a_2}{c_2}
+\epsilon_3,& &x\in\mathbb{R}.
\end{aligned}
\right.
\label{339}
\end{equation}

Moreover, since $\frac{a_1}{a_2}>\max\left\{\frac{b_1}{b_2},
~\frac{c_1}{c_2}\right\}$, then by Morita et al.\cite{MoritaSIAM2009}
that the solution $(u_2,v_2)$ of the following
\begin{equation}
\left\{
\begin{aligned}
&(u_2)_t=u_2(a_1-b_1u_2-c_1v_2),& &t>0,\\
&(v_2)_t=v_2(a_2-b_2u_2-c_2v_2),& &t>0,\\
&u_2(0)=\tilde B,~v_2(0)=\frac{a_2}{c_2}+\epsilon_3
\end{aligned}
\right.
\label{340}
\end{equation}
satisfies $\lim_{t\rightarrow+\infty}(u_2(t),v_2(t))=\left(\frac{a_1}{b_1},
0\right)$, which implies that the solution $(U^*(t,x),V^*(t,x))$ of
\begin{equation}
\left\{
\begin{aligned}
&U^*_t-d_1(J*U^*-U^*)=U^*(a_1-b_1U^*-c_1V^*),& &t>0,~x\in\mathbb{R},\\
&V^*_t-d_2(J*V^*-V^*)=V^*(a_2-b_2U^*-c_2V^*),& &t>0,~x\in\mathbb{R},\\
&U^*(0,x)=\tilde B,~V^*(0,x)=\frac{a_2}{c_2}+\epsilon_3,& &x\in\mathbb{R}
\end{aligned}
\right.
\label{341}
\end{equation}
satisfies $\lim_{t\rightarrow+\infty}\left(U^*(t,x),V^*(t,x)
\right)=\left(\frac{a_1}{b_1},0\right)$ uniformly in any
bounded subset of $\mathbb{R}$.

By using the comparison principle to problems (\ref{339}) and (\ref{341})
we obtain that $\underline u^*(x)\ge\frac{a_1}{b_1}$ and
$\overline v^*(x)\le V^*(t,x)$ for all $x\in\mathbb{R}$, which indicates that
$\underline u^*(x)\ge\frac{a_1}{b_1}$ and $\underline v^*(x)=0$ for all $x\in\mathbb{R}$, and then $\underline u_l(x)\ge\frac{a_1}{b_1}$ and
$\overline v_l(x)=0$ for $x\in(-l,l)$, and hence $\underline u(t,x;\underline u_{T_l},\overline v_{T_l})\ge\frac{a_1}{b_1}$ and
$\overline v(t,x;\underline u_{T_l},\overline v_{T_l})=0$ as
$t\rightarrow+\infty$. Further, we get $u(t,x)\ge\frac{a_1}{b_1}$ and $v(t,x)=0$ as $t\rightarrow+\infty$. Recall the proof of Theorem \ref{Thm325} that
$\limsup_{t\rightarrow+\infty}u(t,x)\le\frac{a_1}{b_1}$ uniformly for $x\in\mathbb{R}$. And then $\lim_{t\rightarrow+\infty}
\|u(t,\cdot)\|_{C([g(t),h(t)])}=\frac{a_1}{b_1}$ and $\lim_{t\rightarrow+\infty}\|v(t,\cdot)\|_{L^\infty(\mathbb{R})}=0$.
This completes the proof.
\end{proof}

From now on, we can establish the spreading-vanishing
dichotomy for problem (\ref{101}).
\begin{theorem}
Assume that $\frac{a_1}{a_2}>\max\left\{\frac{b_1}{b_2},~\frac{c_1}{c_2}\right\}$ holds. Let $(u,v,g,h)$ be the unique positive solution of (\ref{101}) with $v_0(x)\not\equiv0$. Then the following alternatives holds: Either

(i) \underline{spreading of $u$}: $h_\infty-g_\infty=\infty$ and
$\lim_{t\rightarrow+\infty}(u,v)=\left(\frac
{a_1}{b_1},0\right)$ uniformly in any bounded subset
of $\mathbb{R}$; or

ii) \underline{vanishing of $u$}: $h_\infty-g_\infty\le R^*$ and $\lim_{t\rightarrow+\infty}\left(u,v\right)=\left(0,\frac{a_2}{c_2}\right)$ uniformly in any bounded subset of $\mathbb{R}$.
\label{Thm333}
\end{theorem}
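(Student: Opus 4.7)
The plan is to assemble this dichotomy from the theorems already established in the previous subsections, together with one additional argument that pins down the asymptotic behaviour of $v$ when the invasion fails. Since $h(t)$ is non-decreasing and $g(t)$ is non-increasing (see the formulas in \eqref{g-h-formula} together with $u\ge0$), the limits $h_\infty:=\lim_{t\to\infty}h(t)$ and $g_\infty:=\lim_{t\to\infty}g(t)$ exist in $\mathbb{R}\cup\{\pm\infty\}$, so exactly one of the alternatives $h_\infty-g_\infty<\infty$ or $h_\infty-g_\infty=\infty$ occurs. The spreading case is essentially Theorem \ref{Thm332}: once $h_\infty-g_\infty=\infty$, it gives $(u,v)\to(a_1/b_1,0)$ uniformly on bounded subsets of $\mathbb{R}$, which is alternative (i).

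For the vanishing alternative, Theorem \ref{Thm331} already yields $h_\infty-g_\infty\le R^*$, and Theorem \ref{uv-vanishing} gives $\|u(t,\cdot)\|_{C([g(t),h(t)])}\to0$. The only thing left to verify is that $v(t,x)\to a_2/c_2$ uniformly on bounded sets. The upper bound $\limsup_{t\to\infty}v(t,x)\le a_2/c_2$ uniformly in $x\in\mathbb{R}$ is immediate by comparison with the ODE $\bar v'=\bar v(a_2-c_2\bar v)$, exactly as at the start of the proof of Theorem \ref{Thm325}.

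The main point, and the step I expect to cost the most work, is the matching lower bound. Extend $u$ by $0$ outside $[g(t),h(t)]$; then $u(t,x)\to0$ uniformly on $\mathbb{R}$, so for any $\epsilon>0$ I can choose $T_\epsilon$ with $b_2u(t,x)\le\epsilon$ for all $t\ge T_\epsilon$, $x\in\mathbb{R}$. Then
\begin{equation*}
v_t\ge d_2\Bigl[\int_{\mathbb{R}}J(x-y)v(t,y)dy-v\Bigr]+v\bigl(a_2-\epsilon-c_2v\bigr),\qquad t\ge T_\epsilon,\ x\in\mathbb{R}.
\end{equation*}
I then localise to $[-L,L]$ as in the proof of Theorem \ref{Thm325}: by the comparison principle, $v(t,x)\ge v_L(t,x)$ on $[T_L,\infty)\times[-L,L]$ for suitable $T_L\ge T_\epsilon$, where $v_L$ solves the bounded-domain logistic problem
\begin{equation*}
(v_L)_t=d_2\Bigl[\int_{-L}^{L}J(x-y)v_L(t,y)dy-v_L\Bigr]+v_L(a_2-\epsilon-c_2v_L)
\end{equation*}
with small positive initial datum at $t=T_L$ and zero Dirichlet-type extension outside $[-L,L]$. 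By Theorem \ref{Thm315}(2), $\lambda_1(d_2,a_2-\epsilon,[-L,L])\to -(a_2-\epsilon)<0$ as $L\to\infty$, so for $L$ large enough the principal eigenvalue is negative; Theorem \ref{Thm313} then yields $v_L(t,x)\to V_L(x)$, the unique positive steady state, uniformly on $[-L,L]$.

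Finally, letting $L\to\infty$ along a diagonal subsequence, standard arguments (monotonicity of $V_L$ in $L$ and comparison with the spatially constant equilibrium) give $V_L(x)\to(a_2-\epsilon)/c_2$ uniformly on any bounded subset of $\mathbb{R}$, so $\liminf_{t\to\infty}v(t,x)\ge(a_2-\epsilon)/c_2$ uniformly on bounded sets. Sending $\epsilon\to0$ gives $\liminf_{t\to\infty}v(t,x)\ge a_2/c_2$, which combined with the upper bound proves $v(t,x)\to a_2/c_2$ uniformly on bounded subsets of $\mathbb{R}$, completing the vanishing alternative (ii). The delicate ingredient throughout is the passage from the bounded-domain eigenvalue picture of Section 3.1 to the whole-line convergence, which is why the two-step comparison (first to the bounded-domain logistic problem, then $L\to\infty$) is essential.
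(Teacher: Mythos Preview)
Your proposal is correct and, in fact, more complete than what the paper offers. The paper states Theorem~\ref{Thm333} immediately after Theorem~\ref{Thm332} with no proof at all, treating it as a direct corollary of Theorems~\ref{Thm331}, \ref{Thm332} and \ref{uv-vanishing}. Your decomposition is exactly the intended one: alternative~(i) is Theorem~\ref{Thm332}, and for alternative~(ii) the bound $h_\infty-g_\infty\le R^*$ is Theorem~\ref{Thm331} while $u\to0$ is Theorem~\ref{uv-vanishing}.

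The point you flag as ``the main point'' --- that $v\to a_2/c_2$ in the vanishing case --- is something the paper never actually proves anywhere: Theorem~\ref{uv-vanishing} addresses only $u$, and the subsequent subsections deal with the spreading scenario. Your argument for this step (ODE upper bound plus the bounded-domain logistic comparison via Theorems~\ref{Thm315} and \ref{Thm313}, then $L\to\infty$ and $\epsilon\to0$) is the natural one and mirrors the technique already used in the proof of Theorem~\ref{Thm325}, so it is entirely in the spirit of the paper. In short, you have supplied a detail that the authors left implicit; there is no divergence in approach, only in thoroughness.
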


We obtained that spreading will always happens as long
as $2h_0\ge R^*$. Following is a criteria that
devoted to the expanding ability $\mu$ to govern the
spreading alternative if $2h_0<R^*$.

\begin{theorem}
Assume that $\frac{a_1}{a_2}>\max\left\{\frac{b_1}{b_2},~\frac{c_1}{c_2}\right\}$ holds. If $2h_0<R^*$, then there exists $\underline\mu>0$ such
that $h_\infty-g_\infty=\infty$ if $\mu\ge\underline\mu$.
\label{spreading-case}
\end{theorem}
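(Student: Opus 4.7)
The plan is to argue by contradiction: assume $h_\infty-g_\infty<\infty$ for some value of $\mu$, so that the dichotomy of Theorem~\ref{Thm333} forces vanishing, i.e.\ $\|u(t,\cdot)\|_{C([g(t),h(t)])}\to 0$ and $v(t,x)\to a_2/c_2$ locally uniformly. Since $v$ satisfies $v_t\le d_2(J*v-v)+v(a_2-c_2 v)$, it is dominated by the spatially-constant logistic function $\bar v(t)$, whose convergence to $a_2/c_2$ depends only on $\|v_0\|_\infty$ and not on $\mu$. Using $a_1/a_2>c_1/c_2$, pick $\epsilon>0$ so small that $A_\epsilon:=a_1-c_1(a_2/c_2+\epsilon)>0$; then $A_\epsilon<a_1<d_1$ by assumption (F2), and Theorem~\ref{Thm316} supplies a critical length $R^*_\epsilon$ for the reaction $A_\epsilon$, with $R^*_\epsilon>R^*>2h_0$ by monotonicity of $R^*$ in the reaction rate. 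Fix $T_\epsilon>0$, depending only on $v_0$, such that $v(t,x)\le a_2/c_2+\epsilon$ for every $t\ge T_\epsilon$ and $x\in\mathbb{R}$.

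Next I would manufacture a $\mu$-independent positive lower bound for $u(T_\epsilon,\cdot)$ on $[-h_0,h_0]$. Because $[-h_0,h_0]\subset(g(t),h(t))$ and $u,v\le M_0$, for $x\in[-h_0,h_0]$ the $u$-equation majorizes
\begin{equation*}
\tilde u_t=d_1\!\left[\int_{-h_0}^{h_0}J(x-y)\tilde u(t,y)\,dy-\tilde u\right]+(a_1-b_1 M_0-c_1 M_0)\,\tilde u,\quad \tilde u(0,\cdot)=u_0,
\end{equation*}
and Lemma~\ref{lemma-MP2} yields $u(T_\epsilon,x)\ge \tilde u(T_\epsilon,x)>0$ on $(-h_0,h_0)$. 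After truncation to meet the $0$-boundary data at $\pm h_0$, one obtains a continuous $\underline\phi$ with $\underline\phi(\pm h_0)=0$, $\underline\phi>0$ on $(-h_0,h_0)$, and $\underline\phi\le u(T_\epsilon,\cdot)$; crucially, $\underline\phi$ is built from $u_0,d_1,a_1,b_1,c_1,M_0,h_0,T_\epsilon,J$ alone and is thus independent of $\mu$.

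For $t\ge T_\epsilon$ the triple $(u,g,h)$ then dominates the unique positive solution $(\underline u,\underline g,\underline h)$ of the single-species nonlocal free-boundary problem with logistic reaction $\underline u(A_\epsilon-b_1\underline u)$, kernel $J$, expansion rate $\mu$, and initial data $\underline u(T_\epsilon,\cdot)=\underline\phi$, $\underline h(T_\epsilon)=h_0$, $\underline g(T_\epsilon)=-h_0$; the comparison $\underline h\le h$, $\underline g\ge g$, $\underline u\le u$ follows from the single-species comparison principle of \cite{CDLL-2018}. Since the auxiliary problem has $\mu$-independent data, reaction $A_\epsilon>0$ with $A_\epsilon<d_1$, and initial interval $2h_0<R^*_\epsilon$, the sharp criterion in $\mu$ for the nonlocal single-species free-boundary logistic problem (\cite{CDLL-2018}) provides $\underline\mu>0$ such that $\mu\ge\underline\mu$ implies $\underline h_\infty-\underline g_\infty=\infty$. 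By the comparison, $h_\infty-g_\infty=\infty$, contradicting our hypothesis.

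The main obstacle is securing and invoking the sharp $\mu$-criterion for the single-species auxiliary problem: unlike the classical Stefan setting of \cite{DYH2014}, no explicit compactly supported self-similar subsolution is available, so the argument must combine the free-boundary expansion formula $\underline h'(t)-\underline g'(t)=\mu\iint J(x-y)\underline u(t,x)\,dy\,dx$ with the eigenvalue framework of Theorems~\ref{Thm316}--\ref{Thm313}. The idea is to show that for $\mu$ sufficiently large the interval $(\underline g(t),\underline h(t))$ crosses length $R^*_\epsilon$ in finite time, after which $\lambda_1(d_1,A_\epsilon,(\underline g(t),\underline h(t)))<0$ yields a positive uniform lower bound for $\underline u$ on a middle sub-interval via Theorem~\ref{Thm313}; this lower bound feeds back into the expansion formula and forces $\underline h-\underline g\to\infty$, completing the contradiction.
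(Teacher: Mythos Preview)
Your proposal is correct and follows a genuinely different route from the paper. The paper argues directly from the free-boundary equations: assuming vanishing, it integrates $h'-g'$ to obtain
\[
R^*-2h_0>\mu\int_T^{\hat T}\!\!\left(\int_{g(\tau)}^{g(\tau)+\epsilon_0/2}+\int_{h(\tau)-\epsilon_0/2}^{h(\tau)}\right)u(\tau,x)\,dx\,d\tau,
\]
from which an upper bound on $\mu$ is read off. Your argument instead reduces everything to the single-species problem of \cite{CDLL-2018}: you first freeze a $\mu$-independent time $T_\epsilon$ at which $v\le a_2/c_2+\epsilon$ (via the spatially-constant logistic supersolution), then build a $\mu$-independent lower initial datum $\underline\phi$ on $[-h_0,h_0]$, and finally compare $(u,g,h)$ with the auxiliary single-species free-boundary problem with reaction $\underline u(A_\epsilon-b_1\underline u)$, invoking its known sharp $\mu$-criterion.

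What your route buys is transparency of the $\mu$-independence: since $T_\epsilon$ and $\underline\phi$ do not see $\mu$, the threshold $\underline\mu$ produced by \cite{CDLL-2018} for the auxiliary problem is manifestly independent of the original $\mu$. In the paper's direct approach the integrand $u(\tau,x)$ depends on $\mu$, so one must separately argue (e.g.\ via monotonicity in $\mu$) that the integral is bounded below uniformly in $\mu$; this step is implicit in the paper. Your approach also mirrors the proof of Theorem~\ref{vanishing-case}, giving a pleasing symmetry: both directions are handled by comparison with the single-species model. The final paragraph of your proposal, sketching how one might re-derive the large-$\mu$ spreading criterion for the auxiliary problem, is unnecessary: that result is already contained in \cite{CDLL-2018} (the sharp-threshold Theorem~3.14 cited in the proof of Theorem~\ref{Thm335} here), so you may simply invoke it.
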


\begin{proof}
Suppose on the contrary that $h_\infty-g_\infty<\infty$
for all $\mu>0$ if $2h_0<R^*$. It then follows from Theorem
\ref{Thm331} that $h_\infty-g_\infty\le R^*$. And hence we can find a large $T$ such that $2h_0<h(T)-g(T)\le R^*$.

The free boundary conditions $h'(t)$ and $g'(t)$ yields that
\begin{align*}
&h(t)=h(T)+\mu\int_T^t\int_{g(\tau)}^{h(\tau)}\int_{h(\tau)}^{+\infty}J(x-y)
u(\tau,x)dydxd\tau,\\
&g(t)=g(T)-\mu\int_T^t\int_{g(\tau)}^{h(\tau)}\int_{-\infty}^{g(\tau)}J(x-y)
u(\tau,x)dydxd\tau.
\end{align*}
By letting $t\rightarrow+\infty$ deduce that
\begin{align*}
R^*-2h_0>&~\left(h_\infty-g_\infty\right)-\left(h(T)-g(T)\right)\\
=&~\mu\int_T^\infty\left(\int_{g(\tau)}^{h(\tau)}\int_{h(\tau)}^{+\infty}+\int_{g(\tau)}^{h(\tau)}\int_{-\infty}^{g(\tau)}\right)J(x-y)u(\tau,x)dydxd\tau.
\end{align*}

Moreover, it follows from assumption
$(A1)$ that there exist constants $\epsilon_0>0$, $\delta_0>0$ such that $J(x-y)\ge\delta_0$ if $|x-y|\le\epsilon_0$. Therefore,
\begin{align*}
&~\mu\int_{g(\tau)}^{h(\tau)}\int_{h(\tau)}^{+\infty}J(x-y)u(\tau,x)dydx\\
\ge~&\mu\int_{h(\tau)-\frac{\epsilon_0}{2}}^{h(\tau)}\int_{h(\tau)}^{h(\tau)+
\frac{\epsilon_0}{2}}J(x-y)u(\tau,x)dydx
\ge\frac{1}{2}\mu\epsilon_0\delta_0\int_{h(\tau)-\frac{\epsilon_0}{2}}^{h(\tau)}u(\tau,x)dx
\end{align*}
and
$$
\mu\int_{g(\tau)}^{h(\tau)}\int_{-\infty}^{g(\tau)}J(x-y)u(\tau,x)dydx\ge\frac{1}{2}\mu\epsilon_0\delta_0\int_{g(\tau)}^{g(\tau)+\frac{\epsilon_0}{2}}u(\tau,x)dx.
$$
Therefore, we have
\begin{align*}
R^*-2h_0>~&\mu\int_T^\infty\left(\int_{g(\tau)}^{h(\tau)}\int_{h(\tau)}^{+\infty}+\int_{g(\tau)}^{h(\tau)}\int_{-\infty}^{g(\tau)}\right)J(x-y)u(\tau,x)dydxd\tau\\
\ge~&\frac{1}{2}\mu\epsilon_0\delta_0\int_T^{\hat T}\left(\int_{g(\tau)}^{g(\tau)+\frac{\epsilon_0}{2}}+\int_{h(\tau)-\frac{\epsilon_0}{2}}^{h(\tau)}\right)u(\tau,x)dxd\tau,
\end{align*}
where $\hat T\in(T,\infty)$. Since $u(t,x)>0$, then $\Delta(\delta_0,
\epsilon_0,T,\hat T)$ defined by
$$
\Delta(\delta_0,\epsilon_0,T,\hat T)=\frac{1}{2}\epsilon_0\delta_0\int_T^{\hat T}\left(\int_{g(\tau)}^{g(\tau)+\frac{\epsilon_0}{2}}+\int_{h(\tau)-\frac{\epsilon_0}{2}}^{h(\tau)}\right)u(\tau,x)dxd\tau>0
$$
and thus
$$
0<\mu<\left(R^*-2h_0\right)\left\{\Delta(\delta_0,\epsilon_0,T,\hat T)
\right\}^{-1}:=\underline\mu,
$$
which in turn indicates that we can find $\underline\mu>0$
such that for all $\mu\ge\underline\mu$, there holds $h_\infty-g_\infty=\infty$ even though $2h_0<h^*$.
\end{proof}

Theorem \ref{spreading-case} states that the superior competitor $u$ will spread eventually if the expanding ability $\mu\ge\underline\mu>0$ even though the initial occupied stage is small. Below is a criteria on $\mu$ that govern the vanishing case.

\begin{theorem}
Assume that $\frac{a_1}{a_2}>\max\left\{\frac{b_1}{b_2},~\frac{c_1}{c_2}\right\}$ holds. If $2h_0<R^*$, then there
exists $\overline\mu\ge0$ such that $h_\infty-g_\infty<\infty$
if $0<\mu\le\overline\mu$.
\label{vanishing-case}
\end{theorem}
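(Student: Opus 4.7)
The plan is to prove the conclusion by constructing a super-solution $(\bar u,\bar v,\underline v,\bar g,\bar h)$ to \eqref{101} whose free boundaries remain bounded as $t\to\infty$, and then invoking the Comparison Principle (Theorem \ref{Comparison-Principle-1}) to obtain $h_\infty\le \bar h_\infty<\infty$ and $g_\infty\ge\bar g_\infty>-\infty$. Since $2h_0<R^*$, Theorems \ref{Thm315}--\ref{Thm316} furnish some $h^*\in(h_0,R^*/2)$ with $\lambda^*:=\lambda_1(d_1,a_1,(-h^*,h^*))>0$; let $\phi^*\in C(\mathbb R)$ be the associated positive principal eigenfunction (so $\phi^*>0$ on $(-h^*,h^*)$ and $\phi^*\equiv 0$ elsewhere). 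I would first fix $\gamma\in(0,\lambda^*)$ and take $M$ so large that $M\phi^*(x)\ge u_0(x)$ on $[-h_0,h_0]$ (possible since $\min_{[-h_0,h_0]}\phi^*>0$), then declare
\begin{equation*}
\bar u(t,x)=Me^{-\gamma t}\phi^*(x)\ \text{on}\ [\bar g(t),\bar h(t)]\ (\text{zero outside}),\quad \bar v\equiv\max\bigl\{\|v_0\|_\infty,\tfrac{a_2}{c_2}\bigr\},\quad \underline v\equiv 0,
\end{equation*}
with $\bar g(t)=-\bar h(t)$ and $\bar h(t)$ the solution to the ODE saturating the free-boundary inequality,
\begin{equation*}
\bar h'(t)=\mu Me^{-\gamma t}\int_{-\bar h(t)}^{\bar h(t)}\phi^*(x)\int_{\bar h(t)}^{+\infty}J(x-y)\,dy\,dx,\qquad \bar h(0)=h_0.
\end{equation*}

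The crude estimate $\bar h'(t)\le \mu Me^{-\gamma t}\int_{\mathbb R}\phi^*(x)\,dx$ yields $\bar h_\infty\le h_0+\mu M\|\phi^*\|_{L^1}/\gamma$, so setting $\bar\mu:=\gamma(h^*-h_0)/\bigl(2M\|\phi^*\|_{L^1}\bigr)$ guarantees $\bar h(t)\le h_0+\tfrac{1}{2}(h^*-h_0)<h^*$ for all $t\ge 0$ and every $\mu\in(0,\bar\mu]$. In particular $[\bar g(t),\bar h(t)]\Subset(-h^*,h^*)$, so $\phi^*\equiv 0$ outside the support of $\bar u$, and hence
\begin{equation*}
\int_{\bar g(t)}^{\bar h(t)}J(x-y)\bar u(t,y)\,dy\le Me^{-\gamma t}\int_{-h^*}^{h^*}J(x-y)\phi^*(y)\,dy=Me^{-\gamma t}(J*\phi^*)(x).
\end{equation*}
Combined with the eigenvalue identity $d_1(J*\phi^*-\phi^*)+a_1\phi^*=-\lambda^*\phi^*$ and $f_1(\bar u,\underline v)=\bar u(a_1-b_1\bar u)\le a_1\bar u$, this produces
\begin{equation*}
\bar u_t-d_1\Big[\int_{\bar g(t)}^{\bar h(t)}J(x-y)\bar u(t,y)\,dy-\bar u\Big]-f_1(\bar u,\underline v)\ \ge\ (\lambda^*-\gamma)\bar u\ \ge\ 0
\end{equation*}
on $(\bar g(t),\bar h(t))$. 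The inequalities for $\bar v,\underline v$ reduce to $\bar v(a_2-c_2\bar v)\le 0$ and $0\le 0$, both automatic; the initial data hypotheses and the free-boundary equalities hold by construction. Theorem \ref{Comparison-Principle-1} then delivers $h(t)\le\bar h(t)\le h^*$ and $g(t)\ge\bar g(t)\ge -h^*$, whence $h_\infty-g_\infty\le 2h^*<R^*<\infty$.

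The main obstacle I foresee lies in the support condition for $\bar u$: the comparison principle requires $\bar u\equiv 0$ outside $(\bar g(t),\bar h(t))$, which rules out using the full eigenfunction $\phi^*$ on $(-h^*,h^*)$ as the naive candidate; on the other hand, truncating the support to $[\bar g(t),\bar h(t)]$ introduces a nonlocal defect in the convolution $J*\bar u$ relative to $J*\phi^*$ that must be shown to have a favorable sign. The key point is that this defect is $-Me^{-\gamma t}\int_{|y|>\bar h(t)}J(x-y)\phi^*(y)\,dy\le 0$, i.e.\ strictly favorable, precisely because $\phi^*\ge 0$ and the domain of integration shrinks. The smallness of $\mu$ is what keeps $\bar h(t)$ strictly below $h^*$ so that this monotonicity argument is self-consistent throughout time, and the containment $\bar h(t)<h^*$ is maintained by a straightforward continuation argument applied to the defining ODE for $\bar h$.
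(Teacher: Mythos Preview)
Your proof is correct, but it takes a different route from the paper's. The paper's argument is a two-line reduction: since $c_1v\ge 0$, the triple $(u,g,h)$ is a lower solution of the single-species nonlocal free boundary problem \eqref{upper-bound-equation-1}, and the paper then simply invokes \cite[Theorem 3.12]{CDLL-2018} to obtain $\overline\mu$ for that decoupled problem and hence for \eqref{101}. You instead construct the super-solution $(\bar u,\underline v,\bar g,\bar h)$ explicitly from the principal eigenfunction $\phi^*$ on a slightly enlarged interval $(-h^*,h^*)$ with $\lambda^*>0$, and verify all the differential inequalities and the bound $\bar h(t)<h^*$ by hand. Your approach is essentially what lies \emph{inside} the cited result from \cite{CDLL-2018}, so it is more self-contained and avoids the external reference, at the cost of some additional length; the paper's reduction is shorter but relies on having already established the single-species vanishing criterion. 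One minor imprecision: $\phi^*$ is continuous on $[-h^*,h^*]$ and extended by zero to $\mathbb R$, but it is generally \emph{not} in $C(\mathbb R)$ (it need not vanish at $\pm h^*$); this does not affect your argument since you only use $\min_{[-h_0,h_0]}\phi^*>0$ and the eigenvalue identity on the open interval, and the comparison principle (Theorem \ref{Comparison-Principle-1}) allows $\bar u$ to be discontinuous across $x=\bar h(t)$.
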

\begin{proof}
It is easy to find that $u(t,x)$ in problem (\ref{101}) satisfies the following
\begin{equation*}
\left\{
\begin{aligned}
&u_t\le d_1\left[\int_{g(t)}^{h(t)}J(x-y)u(t,y)dy-u\right]+u(a_1-b_1u),
& &t>0, x\in(g(t),h(t)),\\
&u(t,g(t))=u(t,h(t))=0,& &t>0,\\
&h'(t)=\mu\int_{g(t)}^{h(t)}\int_{h(t)}^{+\infty}J(x-y)u(t,x)dydx,& &t>0,\\
&g'(t)=-\mu\int_{g(t)}^{h(t)}\int_{-\infty}^{g(t)}J(x-y)u(t,x)dydx,& &t>0,\\
&u(0,x)=u_0(x),~h(0)=-g(0)=h_0,& &x\in[-h_0,h_0],
\end{aligned}
\right.
\end{equation*}
which immediately deduces that $(u,g,h)$ is a lower solution of
\begin{equation}
\left\{
\begin{aligned}
&\hat u_t=d_1\left[\int_{\hat g(t)}^{\hat h(t)}J(x-y)\hat u(t,y)dy-\hat u\right]
+\hat u(a_1-b_1\hat u), & &t>0, x\in(\hat g(t)),\hat h(t)),\\
&\hat u(t,\hat g(t))=\hat u(t,\hat h(t))=0,& &t>0,\\
&\hat h'(t)=\mu\int_{\hat g(t)}^{\hat h(t)}\int_{\hat h(t)}^{+\infty}J(x-y)
\hat u(t,x)dydx,& &t>0,\\
&\hat g'(t)=-\mu\int_{\hat g(t)}^{\hat h(t)}\int_{-\infty}^{\hat g(t)}J(x-y)
\hat u(t,x)dydx,& &t>0,\\
&\hat u(0,x)=u_0(x),~\hat h(0)=-\hat g(0)=h_0,& &x\in[-h_0,h_0].
\end{aligned}
\right.
\label{upper-bound-equation-1}
\end{equation}
Note that problem (\ref{upper-bound-equation-1}) is the model that studied in
\cite{CDLL-2018}, and it follows from \cite[Theorem 3.12]{CDLL-2018} that
there exists $\overline\mu\ge0$ such that vanishing of $\hat u$ happens if $0<\mu\le\overline\mu$ since $2h_0<R^*$. Therefore, we obtain that
$h(t)-g(t)\le\hat h(t)-\hat g(t)<\infty$ and $u\le\hat u\rightarrow0$ as $t\rightarrow\infty$ if $0<\mu\le\overline\mu$. This completes the proof.
\end{proof}

\begin{theorem}
Assume that $\frac{a_1}{a_2}>\max\left\{\frac{b_1}{b_2},~\frac{c_1}{c_2}\right\}$ and $2h_0<R^*$. Then there exists $\mu^*\ge0$ such that
$h_\infty-g_\infty=\infty$ if $0<\mu\le\mu^*$ and $h_\infty-g_\infty<\infty$ if
$\mu>\mu^*$.
\label{Thm335}
\end{theorem}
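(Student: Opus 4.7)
The plan is to prove the existence of a sharp threshold $\mu^*$ by combining Theorem \ref{spreading-case} and Theorem \ref{vanishing-case} with a monotone-dependence lemma for the free boundaries. The key preliminary step is to establish that, with all other parameters and initial data fixed, the behavior of $h^\mu_\infty-g^\mu_\infty$ depends monotonically on the expanding coefficient $\mu$. I would derive this from the comparison principle in Theorem \ref{Comparison-Principle-1} by using the solution at one value of $\mu$ as a sub- or super-solution at another. A routine first-touch argument, combined with the Maximum Principle (Lemma \ref{Maximum-Principle}) transfers the comparison from the free-boundary level to the interior $u$-component, so that a strict ordering of $(g^\mu, h^\mu)$ can be guaranteed whenever the two regimes differ.

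With this monotonicity in hand, I would set
\[
\mu^*:=\sup\Big\{\mu>0:\ h^\mu_\infty-g^\mu_\infty=\infty\Big\},
\]
with the convention $\mu^*=0$ if this set is empty. Theorem \ref{spreading-case} and Theorem \ref{vanishing-case} guarantee that both regimes are non-empty and separated, from which one concludes that $\mu^*\in[0,\infty)$ is well defined. The monotonicity then forces the spreading and vanishing sets to form a partition of $(0,\infty)$ of precisely the form $(0,\mu^*]$ and $(\mu^*,\infty)$, producing exactly the dichotomy asserted by the theorem.

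The main obstacle will be pinning down the behaviour at the threshold $\mu=\mu^*$ itself. I would handle it through continuous dependence of the solution on $\mu$ over any finite time window $[0,T]$, parallel to the contraction-mapping argument in the proof of Theorem \ref{u-v-g-h-exist}, combined with the sharp bound $h^\mu_\infty-g^\mu_\infty\le R^*$ from Theorem \ref{Thm331} available in the vanishing regime. This allows one to pass to the limit in the integral formulas for $h'(t)$ and $g'(t)$ and to apply the dichotomy of Theorem \ref{Thm333} to the limiting solution. The delicate part is ruling out a borderline situation where $h^{\mu^*}_\infty-g^{\mu^*}_\infty$ lies strictly between $2h_0$ and $R^*$: here the strict inequality in Theorem \ref{Thm331} must be invoked to show that $\mu^*$ itself belongs to the spreading regime, so that the inclusion $(0,\mu^*]\subset\{\mu:h^\mu_\infty-g^\mu_\infty=\infty\}$ is preserved. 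This borderline analysis, together with the passage to the limit in the free-boundary equations, is where the heart of the argument lies.
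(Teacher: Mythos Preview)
Your overall strategy---monotone dependence of $(g^\mu,h^\mu)$ on $\mu$ via the comparison principle, then a threshold defined as a supremum/infimum, with the borderline case handled by continuous dependence and Theorem~\ref{Thm331}---is exactly the standard one, and is in fact all the paper does: its entire proof is the sentence ``See \cite[Theorem 3.14]{CDLL-2018} for the detailed proof.''  So in spirit you are reproducing that cited argument.

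However, there is a genuine error in your execution, caused by what is almost certainly a misprint in the theorem statement.  Theorems~\ref{spreading-case} and~\ref{vanishing-case} say that spreading occurs for \emph{large} $\mu$ ($\mu\ge\underline\mu$) and vanishing for \emph{small} $\mu$ ($0<\mu\le\overline\mu$); the monotonicity you prove points in the same direction, since larger $\mu$ gives larger $h^\mu(t)-g^\mu(t)$.  Consequently the set $\{\mu>0:\ h^\mu_\infty-g^\mu_\infty=\infty\}$ contains $[\underline\mu,\infty)$ and your quantity
\[
\mu^*:=\sup\Big\{\mu>0:\ h^\mu_\infty-g^\mu_\infty=\infty\Big\}
\]
equals $+\infty$, not a finite number.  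The correct definition is $\mu^*=\sup\{\mu>0:\ h^\mu_\infty-g^\mu_\infty<\infty\}$ (equivalently, the infimum of the spreading set), and the conclusion should read: vanishing for $0<\mu\le\mu^*$, spreading for $\mu>\mu^*$.  This also reverses your borderline analysis: the spreading set is \emph{open} (if $h^{\mu_0}(T)-g^{\mu_0}(T)>R^*$ at some finite $T$, continuous dependence gives the same for nearby $\mu$), hence the vanishing set is closed and contains its supremum $\mu^*$.  So $\mu^*$ lies in the vanishing regime, not the spreading regime as you argued; note also that Theorem~\ref{Thm331} gives only the non-strict bound $h_\infty-g_\infty\le R^*$, so your appeal to a ``strict inequality'' there is not available.
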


\begin{proof}
See \cite[Theorem 3.14]{CDLL-2018} for the detailed proof.
\end{proof}

\section*{Acknowledgments}
\noindent

Our sincere thanks goes to Professor Yihong Du (University of New England) for proposing this question. Research of  Wan-Tong Li was partially supported by NSF of China (11731005, 11671180). Research of Jie Wang was partially supported by NSF of China (11701243).

\end{document}